\documentclass[11pt, psamsfonts, reqno]{amsart}
\usepackage{amsmath}
\usepackage{amsthm}
\usepackage{amssymb}
\usepackage{amscd}
\usepackage{amsfonts}
\usepackage{amsbsy}
\usepackage{epsfig} 
\usepackage[bookmarks=false]{hyperref}
\usepackage{mathrsfs}
\usepackage{pdfsync}
\usepackage{enumitem}

\textheight=215mm

\topmargin=5mm
\oddsidemargin=10mm
\evensidemargin=10mm
\textwidth=145mm
\parindent=0cm
\parskip=3mm

\newtheorem{theorem}{Theorem}
\newtheorem{remark}{Remark}
\newtheorem{proposition}{Proposition}
\newtheorem{lemma}{Lemma}

\newtheorem{definition}{Definition}

\newtheorem{maintheorem}{Theorem}

\def\ie{{\em i.e.,\ }}

\def\cf{{\em cf.\ }}

\newfont\bbf{msbm10 at 12pt}
\def\eps{\varepsilon}
\def\ph{\varphi}
\def\R{{\mathbb R}}

\def\N{{\mathbb N}}

\def\E{{\mathbb E}}

\def\M{{\mathcal M}}

\def\hot{{\hbox{{\rm h.o.t.}}}}

\def\feig{f_{\mbox{\tiny feig}}}

\def\Bas{\mbox{Bas}}
\def\w{\tilde w^t}

\def\Fib{\mbox{Fib}_\ell}
\def\FP{\Theta}
\def\le{\leqslant}
\def\ge{\geqslant}
\def\htop{h_{top}}
\def\Pconf{ P_{\mbox{\rm\tiny Conf}} }

\def\drift{\mbox{\bf \it Dr}}

\newcommand{\st}{such that }

\def\equi{\Leftrightarrow}
\newdimen\AAdi%
\newbox\AAbo%
\def\AArm{\fam0 }
\def\AAk#1#2{\setbox\AAbo=\hbox{#2}\AAdi=\wd\AAbo\kern#1\AAdi{}}%
\def\AAr#1#2#3{\setbox\AAbo=\hbox{#2}\AAdi=\ht\AAbo\raise#1\AAdi\hbox{#3}}%
\def\1{{\AArm 1\AAk{-.8}{I}I}}%

\bibliographystyle{plain}

\begin{document}

\title[Wild attractors and thermodynamic formalism]
{Wild attractors and thermodynamic formalism.
}

\subjclass[2000]{
37E05,  	
37D35,  	
60J10, 	
37D25,  	
37A10}  	

\keywords{Transience, thermodynamic formalism,  interval maps, Markov chains,  equilibrium states, non-uniform hyperbolicity}

\author{Henk Bruin}\address{Faculty of Mathematics, University of Vienna, 
Oskar Morgensternplatz 1, 1090 Vienna, Austria}
\email{henk.bruin@univie.ac.at}
\urladdr{\url{http://www.mat.univie.ac.at/~bruin}}

\author{Mike Todd}\address{
Mathematical Institute,
University of St Andrews,
North Haugh,
St Andrews,
Fife,
KY16 9SS,
Scotland}
\email{m.todd@st-andrews.ac.uk}
\urladdr{\url{http://www.mcs.st-and.ac.uk/~miket/index.html}}

\date{Version of \today}
\thanks{
The hospitality of the Mittag-Leffler Institute
in Stockholm
(2010 Spring programme on Dynamics and PDEs) is gratefully acknowledged.
Parts of this paper are based on notes written in 1994-1995
when HB had a research fellowship at the University of Erlangen-Nuremberg, 
funded by the Netherlands Organisation for Scientific Research (NWO). MT was partially supported by NSF grants DMS 0606343 and DMS 0908093.
}

\begin{abstract}
Fibonacci unimodal maps can have a wild Cantor attractor, 
and hence be Lebesgue dissipative, depending on the order of the critical point.
We present a one-parameter family $f_\lambda$
of countably piecewise linear unimodal Fibonacci maps 
in order to study
the thermodynamic formalism of dynamics where dissipativity of
Lebesgue (and conformal) measure is responsible for phase transitions.
We show that for the potential $\phi_t = -t\log|f'_\lambda|$,
there is a unique phase transition at some $t_1 \le 1$, and
the pressure $P(\phi_t)$ is analytic (with unique equilibrium state)
elsewhere. The pressure is majorised by a non-analytic 
$C^\infty$ curve (with all derivatives equal to $0$ at $t_1 < 1$) at the emergence of
a wild attractor, whereas the phase transition at $t_1 = 1$ can be of any finite order
for those $\lambda$ for which $f_\lambda$ is Lebesgue conservative.
We also obtain results on the existence of conformal measures and equilibrium states, as well as the hyperbolic dimension and the dimension of the basin of $\omega(c)$.
\end{abstract}

\maketitle

\section{Introduction}\label{sec:intro}

The aim of this paper is to understand thermodynamic formalism
of unimodal interval maps $f:I \to I$ on the boundary between 
conservative and dissipative behaviour.
For a `geometric' potential $\phi_t = -t \log|f'|$, the {\em pressure function}
is defined by
\begin{equation}\label{eq:pressure}
P(\phi_t) = \sup\left\{ h_\mu + \int \phi_t \ d\mu : \mu \in \M,
\int \phi_t~ d\mu > -\infty \right\},
\end{equation}
where the supremum is taken over the set $\M$ of $f$-invariant probability
measures $\mu$, and $h_\mu$ denotes the entropy of the measure. 
A measure $\mu_t \in \M$ that assumes this supremum is called an 
{\em equilibrium state}.
Pressure is a convex and non-increasing function in $t$ 
and $P(\phi_0) = h_{top}(f)$ is the topological entropy of $f$.
At most parameters $t$, the pressure function $t \mapsto P(\phi_t)$ is analytic,
and there is a unique equilibrium state which depends continuously on $t$. 
If the pressure function fails
to be analytic at some $t$, then we speak of a \emph{phase transition} at $t$, which 
hints at a qualitative (and discontinuous), rather than quantitative, 
change in equilibrium states. 
Refining this further, if the pressure function is $C^{n-1}$ at $t$, but not $C^n$, we say that there is an \emph{$n$-th order phase transition} at $t$.

Given a unimodal map $f$ with critical point $c$, we say that the critical point is
 \emph{non-flat}  if there exists a diffeomorphism $\phi:\R \to
\R$ with $\phi(0)=0$ and $1<\ell<\infty$ \st for $x$ close to
$c$, $f(x)=f(c)\pm|\phi(x-c)|^{\ell}$.  The value of
$\ell = \ell_c$ is known as the \emph{critical order} of $c$.  The metric behaviour of a unimodal map is essentially determined by its topological/combinatorial properties plus its critical order.
We give a brief summary of what is known for $C^2$ unimodal maps
with non-flat critical point. 
A first result is due to Ledrappier \cite{Ledrap} who proved that a measure $\mu \in \M$
of positive entropy is an equilibrium state for $t=1$ if and only if
$\mu$ is absolutely continuous w.r.t.\ Lebesgue (abbreviate acip).
This also shows that $t=1$ is the expected first zero of the pressure function.
For simplicity, we assume in the
classification below that $f$ is 
topologically transitive on its dynamical core $[f^2(c), f(c)]$, \ie there exists a point  $x_0$ such that $\overline{\cup_{n\ge 0}f^n(x_0)}=[f^2(c), f(c)]$),
except in cases (1) and (5).

\begin{enumerate}
\item If the critical point $c$ of $f$ is attracted
to an attracting periodic orbit, then the non-wandering set is
hyperbolic on which Bowen's theory \cite{Bowen} applies in its entirety.
In particular, no phase transitions occur.
\item If $f$ satisfies the
Collet-Eckmann condition, \ie derivatives along the critical orbit
grow exponentially fast, then the pressure
 is analytic in a neighbourhood of $t = 1$, \cite{BK}; and $C^1$ for all $t<1$ except when the critical point is preperiodic, \cite{ITeqnat}.  
An example of the preperiodic critical point case is the Chebyshev polynomial $x\mapsto 4x(1-x)$  which, 
as in the much more general work of 
Makarov \& Smirnov \cite{MakSmi}, has a phase transition at $t=-1$.
The pressure function is affine for $t \neq -1$ in this case.
\footnote{Collet-Eckmann maps with ``low-temperature phase transitions'' were found in 
\cite{CLR},  after our paper was first submitted, but which we can include in this revision.}

\item If $f$ is non-Collet-Eckmann but possesses an acip $\mu_{ac}$, then 
there is a first order phase transition at $t=1$ 
(\ie $t\mapsto P(\phi_t)$ is continuous but not $C^1$).
More precisely, $P(\phi_t) = 0$ if and only if $t \ge 1$ and
the left derivative $\lim_{s \uparrow 1} \frac{d}{ds} P(\phi_s) = -\lambda(\mu_{ac}) < 0$, 
where $\lambda(\mu_{ac})=\int\log|f'|~d\mu_{ac}$ denotes the \emph{Lyapunov exponent} 
of $\mu_{ac}$, see \cite[Proposition 1.2]{ITeqnat}. 
\item If $f$ is non-Collet-Eckmann but has an absolutely continuous conservative 
infinite $\sigma$-finite measure, then there is still a phase transition at $t=1$, but $P(\phi_t)$ is $C^1$. In fact, 
$P(\phi_t) = 0$ if and only if $t \ge 1$ and
the left derivative $\lim_{s \uparrow 1} \frac{d}{ds} P(\phi_s) = 0$.
This follows from the proof of \cite[Lemma 9.2]{ITeqnat}. 
\item If $f$ is infinitely renormalisable, then the critical omega-limit set $\omega(c)$ is a Lyapunov stable attractor, and its basin
$\Bas = \{ x : \omega(x) \subset \omega(c) \}$ 
is a second Baire category set of full Lebesgue measure.
The best known example is the Feigenbaum-Coullet-Tresser map
$\feig$, for which the topological entropy $\htop(\feig) = 0$, and so
$P(\phi_t) \equiv 0$ for all $t \ge 0$.
More complicated renormalisation patterns can lead to a more 
interesting thermodynamic behaviour, see Avila \& Lyubich 
\cite{AvLyu_feig}, 
Moreira \& Smania \cite{MorSma} and Dobbs \cite{Dobbs}.
However, this thermodynamic behaviour is primarily a topological, rather than a metric,
phenomenon, so should be seen as complementary to the results given in
this paper.
\item If $f$ has a wild attractor, then $\omega(c)$ is not
Lyapunov stable and attracts a set of full Lebesgue measure, whereas
a second Baire category set of points has a dense orbit in $[f^2(c), f(c)]$.
In \cite[Theorem 10.5]{AvLyu_feig} it is asserted that there exists some $t_1<1$ such that 
 $P(\phi_t) = 0$ for $t \ge t_1$.  In this paper we study this fact, as well as further thermodynamic properties of wild attractors, in detail.
\end{enumerate}
A wild attractor occurs for a unimodal map $f$ if it has very large
critical order $\ell$ as well as Fibonacci combinatorics,
\ie the cutting times are the Fibonacci numbers.
(The {\em cutting times} $(S_k)_{k \ge 0}$ are the sequence of
iterates $n$ at which the image of the 
central branch of $f^n$ contains the critical point.
They satisfy the recursive formula $S_k - S_{k-1} = S_{Q(k)}$
for the so-called {\em kneading map} $Q : \N \to \N_0$;
so Fibonacci maps have kneading map $Q(k) = \max\{k-2, 0\}$,
see Section~\ref{sec:cpl} for more precise details.)

Let us parametrise Fibonacci maps by critical order, say
$$
\Fib:[0,1] \to [0,1], \qquad x  \mapsto a(\ell)(1- |2x-1|^{\ell}),
$$
where $a(\ell) \in [0,1]$ is chosen such that $\Fib$ has Fibonacci combinatorics. The picture is then as follows:
\[
\left\{ \begin{array}{ll}
\ell \le 2 & \Fib  \text{ has an acip which is super-polynomially mixing, \cite{LM, BLS},} \\[2mm]
2 < \ell < 2+\eps & \Fib  \text{ has an acip which is polynomially mixing
with exponent}\\
& \quad \text{ tending to infinity as $\ell \to 2$, \cite{KN, RLS}, } \\[2mm]
\ell_0 < \ell < \ell_1 & \Fib  \text{ has a conservative $\sigma$-finite acim, } \\[2mm]
\ell_1 < \ell  & \Fib  \text{ has a wild attractor \cite{BKNS},
with dissipative $\sigma$-finite}\\
& \quad \text{ acim, \cite{Mar}.}
\end{array}\right.
\]
For the logistic family (\ie critical order is 2), Lyubich proved there cannot be a
wild attractor, so in particular $\Fib$ has no wild attractor. In \cite{KN} 
it was shown that $\ell = 2+\eps$ still
does not allow for a wild attractor for $\Fib$. Wild attractors were shown to
exist \cite{BKNS}  for very large $\ell$.
The value of $\ell_1$ beyond which the existence of a wild attractor
is rigorously proven in \cite{BKNS} is extremely large\footnote{For less restrictive Fibonacci-like combinatorics
(basically if $k-Q(k)$ is bounded) the existence of
wild attractors was proved in \cite{Btams}.}, but
unpublished numerical simulations by Sutherland et al.\
suggest that $\ell_1 = 8$ suffices.
The region $\ell \in (\ell_0, \ell_1)$ is somewhat hypothetical. 
It can be shown \cite{Bthesis} that 
$\Fib$ has an absolutely continuous $\sigma$-finite measure
for $\ell > \ell_0$, 
and it stands to reason that this happens
before $\Fib$ becomes Lebesgue dissipative, but we have
no proof that indeed
$\ell_0 < \ell_1$, nor that this behaviour occurs on exactly a single interval.
The existence of a dissipative  $\sigma$-finite acim when there is a wild attractor was shown by Martens
\cite{Mar}, see also \cite[Theorem 3.1]{BH}.

Within interval dynamics, inducing schemes have become a standard
tool to study thermodynamic formalism, \cite{BT1, BT2, PS, Sartherm, BarIom_HS}.
One constructs a {\em full-branched} Gibbs-Markov induced
system $(Y, F)$  whose thermodynamic properties can be understood in terms
of a full shift on a countable alphabet.
However, precisely in the setting of wild attractors,
the set
\[
Y^\infty = \{ y \in Y : F^n(y) \text{ is well-defined for all } n \ge 0\}
\]
is dense in $Y$ but of zero Lebesgue measure $m$.
For this reason, we prefer to work with a different induced system,
called $(Y,F)$ again, that has branches of arbitrarily short
length, but for which $Y^\infty$ is co-countable.
By viewing the dynamics under $F$ as a random walk, we can show that
transience\footnote{We discuss transience and (null and positive) recurrence in detail in Section~\ref{sec:CMS}.} 
of this random walk (w.r.t.\ Lebesgue measure) implies
the existence of a Cantor attractor.

Proving transience of $(Y, F , m)$  is very technical due to the severe 
non-linearity of $F$ for smooth unimodal maps $f$ with large critical order.
For this reason, we introduce countably piecewise linear unimodal
maps for which induced systems with linear branches can be constructed.
This idea is definitely not new, \cf the maps of
Gaspard \& Wang \cite{GW} and L\"uroth \cite{Luroth, DajKra_book} 
as countably piecewise linear versions of the Farey and Gauss map, respectively.\footnote{In fact, considering $-t \log |f'|$ for the Gaspard \& Wang map is exactly
equivalent to the Hofbauer potential \cite{Hof} for the full shift on two symbols.}
The explicit construction for unimodal maps is new, however.
Although we are mostly interested in Fibonacci
maps, the method works in far more generality; it definitely suffices if
the kneading map $Q(k) \to  \infty$ and a technical condition \eqref{12.1}
is satisfied.
Note that the inducing scheme we will use
is somewhat different from that in \cite{BKNS} which was based on preimages of the fixed point.
Instead, we will use an inducing scheme based on precritical points,
used before in \cite{Bthesis}, and we arrive
at a two-to-one cover of a countably piecewise interval map 
$T_\lambda:(0,1] \to (0,1]$ defined in Stratmann \& Vogt \cite{StraVogt} 
as follows:
For $n \ge 1$, let $V_n:=(\lambda^{n}, \lambda^{n-1}]$ and define
\begin{equation}
T_\lambda(x):= \begin{cases}\ 
\frac{x-\lambda}{1-\lambda} & \text{ if } x\in V_1,\\[2mm]
\ \frac{x-\lambda^{n}}{\lambda(1-\lambda)} & \text{ if } x\in V_n, \quad n \ge 2.
\end{cases}
\label{eq:StrVogt}
\end{equation}

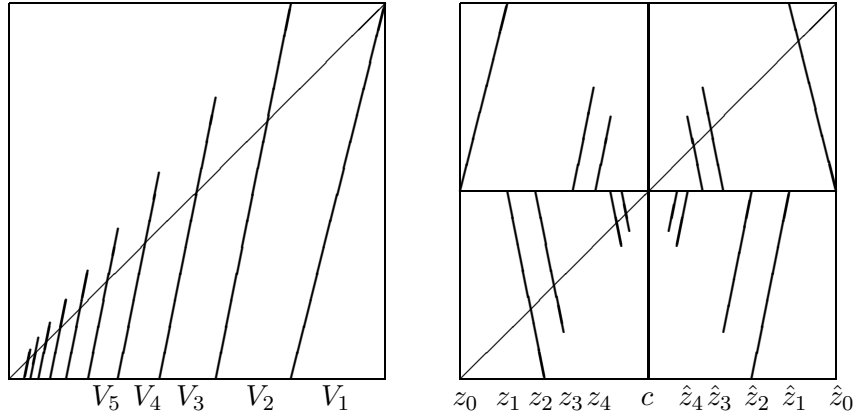
\begin{figure}[ht]
\unitlength=5mm
\begin{picture}(22,12)(0,-0.5) \let\ts\textstyle
\thinlines
\put(0,0){\line(1,0){10}}\put(0,10){\line(1,0){10}}
\put(0,0){\line(0,1){10}} \put(10,0){\line(0,1){10}}
\put(0,0){\line(1,1){10}}
\thicklines
\put(7.5,0){\line(1,4){2.5}} \put(8.3,-0.7){$\tiny V_1$}
\put(5.5,0){\line(1,5){2}} \put(6.3,-0.7){$\tiny V_2$}
\put(4,0){\line(1,5){1.5}}  \put(4.45,-0.7){$\tiny V_3$}
\put(2.9,0){\line(1,5){1.1}}  \put(3.3,-0.7){$\tiny V_4$}
\put(2.1,0){\line(1,5){0.8}}  \put(2.2,-0.7){$\tiny V_5$}
\put(1.52,0){\line(1,5){0.58}}
\put(1.095,0){\line(1,5){0.425}}
\put(0.791,0){\line(1,5){0.304}}
\put(0.572,0){\line(1,5){0.219}}
\put(0.4138,0){\line(1,5){0.1582}}
\put(0.2974,0){\line(1,5){0.1164}}
\put(0.21464,0){\line(1,5){0.08276}}
\thinlines
\put(12,0){\line(1,0){10}}\put(12,10){\line(1,0){10}}
\put(12,0){\line(0,1){10}} \put(22,0){\line(0,1){10}}
\put(17,0){\line(0,1){10}} \put(12,5){\line(1,0){10}}
\put(12,0){\line(1,1){10}}
\thicklines
\put(20.75,10){\line(1,-4){1.25}} \put(21.8,-0.7){$\tiny \hat z_0$}
\put(12,5){\line(1,4){1.25}} \put(11.8,-0.7){$\tiny z_0$}
\put(19.75,0){\line(1,5){1}} \put(20.55,-0.7){$\tiny \hat z_1$}
\put(13.25,5){\line(1,-5){1}} \put(12.95,-0.7){$\tiny z_1$}
\put(19,1.25){\line(1,5){0.75}} \put(19.55,-0.7){$\tiny \hat z_2$}
\put(14,5){\line(1,-5){0.75}} \put(13.8,-0.7){$\tiny z_2$}
\put(19,5){\line(-1,5){0.55}} \put(18.55,-0.7){$\tiny \hat z_3$}
\put(15,05){\line(1,5){0.55}} \put(14.6,-0.7){$\tiny z_3$}
\put(18.45,5){\line(-1,5){0.4}} \put(17.8,-0.7){$\tiny \hat z_4$}
\put(15.6,5){\line(1,5){0.4}} \put(15.35,-0.7){$\tiny z_4$}
\put(18.05,5){\line(-1,-5){0.29}}
\put(16,5){\line(1,-5){0.29}}
\put(17.76,5){\line(-1,-5){0.2125}}
\put(16.29,5){\line(1,-5){0.2125}}  \put(16.8,-0.7){$\tiny c$}
\end{picture}
\caption{The maps $T_\lambda:[0,1] \to [0,1]$ and $F_\lambda:[z_0, \hat z_0]
  \to
[z_0, \hat z_0]$. }
\label{fig1}
\end{figure}

In Section~\ref{sec:cpl}, we will contruct a family $f_\lambda$ 
of countably piecewise linear unimodal maps, for which $F_\lambda$ (see Figure~\ref{fig1}) are appropriate induced maps.
Both $f_\lambda$ and the induced map $F_\lambda$ are linear on intervals $W_k = [z_{k-1}, z_k]$ and $\hat W_k = [\hat z_k, \hat z_{k-1}]$
of length $\frac{1-\lambda}{2} \lambda^k$. 
Here $\hat x = 1-x$
is the symmetric image of a point or set, and $z_k < c < \hat z_k$ 
are the points in $f^{-S_k}(c)$ that are closest to $c$.
We define $F_\lambda(x) = f^{S_{k-1}}$ if $x \in W_k \cup \hat W_k$. 
The induced map $F_\lambda$ satisfies
\begin{equation}
\begin{CD}
[z_0, \hat z_0] @ >F_\lambda >>[z_0, \hat z_0]\\
@V\pi VV @VV\pi V\\
[0,1] @>T_\lambda >> [0,1]
\end{CD}
\qquad\qquad\qquad
\pi:x\mapsto \begin{cases}
\frac{1-2x}{2(1-z_0)} & \text{ if } x \le \frac12;\\[3mm]
\frac{2x-1}{2(1-z_0)} & \text{ if }  x \ge \frac12.
\end{cases}
\label{eq:comm_diag}
\end{equation}
Note that $\pi^{-1}(V_i)=W_i\cup\hat W_i$.

The one-parameter system $(Y,F_\lambda)$ is of
interest both for its own sake, see \cite{BT3,StraVogt}, and
for the sake of studying (thermodynamic properties of) $f$ itself.
Theorem~\ref{mainthm:Fibo} replaces the somewhat hypothetical picture of
smooth Fibonacci maps
with precise values of critical orders $\ell = \ell(\lambda)$,
where each of the different behaviours occurs.
In this non-differentiable setting, the critical order $\ell$ is defined
by the property that $\frac1C |x-c|^\ell < |f(x)-f(c)| \le C |x-c|^\ell$ for some $C > 0$ and
all $x \in [0,1]$.

\begin{maintheorem}\label{mainthm:Fibo}
The above countably piecewise linear unimodal map 
$f_\lambda$ (\ie with $|W_k| = |\hat W_k| = \frac{1-\lambda}{2} \lambda^k$
and  $\lambda \in (0,1)$) 
satisfies the following properties:
\begin{enumerate}[label=({\alph*}),  itemsep=0.0mm, topsep=0.0mm, leftmargin=7mm]
\item The critical order $\ell = 3 + \frac{2\log(1-\lambda)}{\log \lambda}$.
\item If $\lambda \in (\frac12,1)$, \ie $\ell > 5$,
then $f_\lambda$ has a wild attractor.
\item If $\lambda \in \left[\frac{2}{3+\sqrt5},\frac12\right]$, \ie $4 \le \ell \le 5$,
then $f_\lambda$ has no wild attractor, but an 
infinite $\sigma$-finite acim.
\item If $\lambda \in (0,\frac{2}{3+\sqrt5})$, \ie $\ell \in (3, 4)$,
then $f_\lambda$ has an acip.
\end{enumerate}
\end{maintheorem}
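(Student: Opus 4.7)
The plan is to settle (a) directly from the affine construction and to treat (b)--(d) via the induced map $T_\lambda$ through the commutative diagram \eqref{eq:comm_diag}.

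For (a), we have $|z_{k-1}-c|=\sum_{j\ge k}|W_j|=\lambda^{k}/2$. On $W_k$ the map $f_\lambda$ is affine with slope $s_k$, and since $F_\lambda|_{W_k}=f_\lambda^{S_{k-1}}$ sends $W_k$ affinely onto $[z_0,\hat z_0]$ of length $\lambda$, the product of the $s_j$'s along the $f_\lambda$-itinerary of $W_k$ equals $2/((1-\lambda)\lambda^{k-1})$. Unwinding this product using the Fibonacci recursion $S_k=S_{k-1}+S_{k-2}$, which dictates the itinerary of $W_k$, yields $s_k$ in closed form; then $|f_\lambda(c)-f_\lambda(z_k)|=\sum_{j>k}s_j|W_j|$ compared with $|z_k-c|^\ell$ produces $\ell=3+\frac{2\log(1-\lambda)}{\log\lambda}$. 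The formula passes the sanity checks $\ell=5$ at $\lambda=1/2$ and (using $\lambda=(1-\lambda)^2$) $\ell=4$ at $\lambda=(3-\sqrt5)/2$, matching the thresholds in (b)--(d).

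For (b)--(d), the first step is to compute the Lebesgue-a.c.\ invariant density of $T_\lambda$ in piecewise-constant form $h|_{V_n}=h_n$. Using that preimages of $V_j$ under $T_\lambda$ lie in $V_1\cup\cdots\cup V_{j+1}$ with inverse slopes $1-\lambda$ (from $V_1$) and $\lambda(1-\lambda)$ (from $V_n$, $n\ge 2$), the transfer-operator fixed-point equation reduces to a linear recursion whose solution is $h_n=h_1(1-\lambda)^{-(n-1)}$. The total mass $h_1(1-\lambda)^2/(1-2\lambda)$ is finite iff $\lambda<1/2$; for $\lambda\in[1/2,1)$ the same $h$ gives a $\sigma$-finite infinite measure. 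The associated random walk on $\{V_n\}$ has drift $(2\lambda-1)/(1-\lambda)$, so $T_\lambda$ (and hence $F_\lambda$) is recurrent for $\lambda\le 1/2$ and transient for $\lambda>1/2$; by the general principle announced in the introduction, transience yields a wild Cantor attractor for $f_\lambda$, which is part (b). In the conservative regime $\lambda<1/2$ one applies Kac's formula: since $\mu_F(W_k\cup\hat W_k)=\mu_T(V_k)\asymp\lambda^{k-1}/(1-\lambda)^{k-2}$ and $R=S_{k-1}\asymp\phi^{k-1}$ with $\phi=(1+\sqrt5)/2$,
\[
\int R\,d\mu_F\;\asymp\;\sum_k\left(\frac{\phi\lambda}{1-\lambda}\right)^{k-1},
\]
which is finite iff $\lambda<1/(1+\phi)=\phi^{-2}=2/(3+\sqrt5)$, giving (d). The complementary range $\lambda\in[2/(3+\sqrt5),1/2]$ then gives the $\sigma$-finite infinite acim for $f_\lambda$ in (c) (conservativity of $T_\lambda$ in this range rules out the wild attractor).

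The main obstacle is the transience step for $\lambda>1/2$. The drift calculation on $\{V_n\}$ is itself elementary, but one must verify that the short-branched induced map $F_\lambda$ of this paper --- designed so that $Y^\infty$ is co-countable --- really realizes the transition probabilities producing the $(2\lambda-1)/(1-\lambda)$ drift (this is the reason for working with this particular $F_\lambda$ rather than the inducing scheme of \cite{BKNS}), and then convert transience of $(Y,F_\lambda,m)$ into the geometric statement that $\omega(c)$ is Lyapunov-unstable and attracts a set of full Lebesgue measure. This last conversion, flagged earlier in the paper, is the technical core of the argument.
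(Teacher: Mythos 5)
Your overall route coincides with the paper's: the drift $(2\lambda-1)/(1-\lambda)$ of the walk on $\{V_n\}$ (equivalently $\{W_k\cup\hat W_k\}$), the appeal to \cite{BT3} for the conversion of transience into a wild attractor, the explicit invariant density for $\lambda<1/2$, and the Kac-type criterion $\sum_k S_{k-1}\mu(\W_k)<\infty$ with $S_{k-1}\sim\gamma_+^{k-1}$ giving the threshold $\lambda=2/(3+\sqrt5)$ are exactly what the paper does. Three points need correction or attention, though.

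First, part (a) contains a factual error and a notational collision. You write that $F_\lambda|_{W_k}$ ``sends $W_k$ affinely onto $[z_0,\hat z_0]$'' and deduce a slope $2/((1-\lambda)\lambda^{k-1})$. In fact $F_\lambda(W_k)=\bigcup_{i>Q(k)}W_i$ or $\bigcup_{i>Q(k)}\hat W_i$, which for the Fibonacci kneading map $Q(k)=k-2$ is $[z_{k-2},c]$ or $[c,\hat z_{k-2}]$, of length $\lambda^{k-2}/2$; the slope of $F_\lambda$ on $W_k$ is $s_k=1/(\lambda(1-\lambda))$ for $k\ge2$, not the quantity you wrote. You also use $s_k$ for the slope of $f_\lambda$ on $W_k$, whereas in the paper $s_k$ is reserved for $|DF_\lambda|_{W_k}|$ and the $f_\lambda$-slope is $\kappa_k$. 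The correct calculation runs: $\kappa_j\asymp[\lambda(1-\lambda)]^{2j}$ from the recursion \eqref{12.7}, $|x-c|\asymp\lambda^j$ on $W_j$, and matching $|Df_\lambda(x)|\asymp|x-c|^{\ell-1}$ gives $\ell=1+\log[\lambda^2(1-\lambda)^2]/\log\lambda$, which is the claimed formula. Your sanity checks at $\lambda=1/2$ and $\lambda=(3-\sqrt5)/2$ are fine.

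Second, before any of the induced-map analysis is legitimate you must verify that the choice $\eps_j=\frac{1-\lambda}{2}\lambda^j$ satisfies the compatibility conditions \eqref{12.8} and \eqref{12.9}; without them Proposition~\ref{Proposition 12.1} does not apply, so neither the linearity of $F_\lambda$ on each $W_j$ nor the identification with $T_\lambda$ via \eqref{eq:comm_diag} is available. The paper checks these (the inequalities reduce to $\lambda^4(1-\lambda)\le1-\lambda^3(1-\lambda)^2$ and $\lambda^4\le1-\lambda^3(1-\lambda)^2$, true for all $\lambda\in(0,1)$). This step is missing from your proposal.

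Third, the boundary case $\lambda=1/2$ in (c) deserves an explicit word: the drift vanishes there, so transience/recurrence cannot be read off from the sign of the drift alone; the paper invokes the bounded second moment and the null recurrence established in \cite[Theorem~B]{BT3}. Your phrasing ``recurrent for $\lambda\le1/2$'' quietly asserts the right conclusion but glosses over that this is the non-generic null-recurrent case.
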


As above, let $\phi_t = -t \log|f_\lambda'|$ and $\Phi_t = -t \log|F'_\lambda|$ be the \emph{geometric}
potentials for the unimodal map $f_\lambda$ and its induced version $F_\lambda$,
respectively. (Note that $\Phi_t = \sum_{j=0}^{\tau-1} \phi_t \circ f_\lambda^j$
for inducing time $\tau = \tau(x)$, justifying the name \emph{induced potential}.)

In \cite{BT3}, the precise form of the pressure
function for $((0,1], T_\lambda, -t \log|T'_\lambda|)$
and therefore also for the system 
$(Y, F_\lambda, -t \log|F'_\lambda|)$, is given.  However, this is of
lesser concern to us here, because given $([0,1], f_\lambda)$ with potential $-t\log|f_\lambda'|$,  for most results on the induced system $(Y, F_\lambda)$ to transfer to back to the original system, the correct induced potential on $Y$ is $-\log|F'_\lambda| - p\tau$, where the shift $p\tau$ is determined by a constant $p$ (usually the pressure of $-t\log|f_\lambda'|$) and the 
inducing time $\tau$ where $\tau(x) = S_{k-1}$ whenever 
$x \in W_k \cup \hat W_k$.  The fact that the shift by $p\tau$ depends on the interval $k$ increases the complexity of this problem significantly.
Results from \cite{BT3} which apply directly are contained in the following theorem.
\begin{maintheorem}\label{thm:main hyp dim}
Let $\Bas_\lambda = \{ x \in I : f^n_\lambda(x) \to \omega(c) \text{ as }
n \to \infty\}$ be the basin of $\omega(c)$, and let the 
{\em hyperbolic dimension}
be the supremum of Hausdorff dimensions of hyperbolic sets $\Lambda$, \ie
$\Lambda$ is $f_\lambda$-invariant, compact but bounded away from $c$.
Then
$$
\dim_{hyp}(f_\lambda) = \dim_H(\Bas_{1-\lambda}) = t_1 
$$
where
\begin{equation}\label{eq:t1}
t_1 := \begin{cases}
 1 &\text{ if } \lambda\in (0,1/2],\\
 t_2 & \text{ if } \lambda\in[1/2, 1),
 \end{cases}
\qquad  \text{ where }\ t_2 := -\log 4/\log [\lambda(1-\lambda)].
\end{equation}
\end{maintheorem}

For the properties of pressure presented in
Theorem~\ref{mainthm:thermo_Fibo} and the related 
results in Section~\ref{sec:conformal_for_f}, it is advantageous to
use a different approach to pressure, called \emph{conformal pressure} $\Pconf(\phi_t)$,
which is the smallest potential shift allowing the existence of a 
conformal measure for the potential.
We refer Sections~\ref{sec:original} and \ref{sec:conformal_for_f}
for the precise definitions, but in Theorem~\ref{mainthm:thermo_Fibo_global} we will show that
conformal pressure coincides with the (variational) pressure defined in
\eqref{eq:pressure}.
In \cite{BT3}, it is shown that $t_1$ from \eqref{eq:t1}
is the smallest value at which the pressure 
$P(\Phi_t)$ of the induced system $(Y, F_\lambda,\Phi_t)$ becomes zero. 
This gives the background information for our third main theorem.

\begin{maintheorem}\label{mainthm:thermo_Fibo_global}
The countably piecewise linear Fibonacci map $f_\lambda$, $\lambda \in (0,1)$,
with potential $\phi_t$ has the following thermodynamical properties.
\begin{enumerate}[label=({\alph*}),  itemsep=0.0mm, topsep=0.0mm, leftmargin=7mm]
\item The conformal and variation pressure coincide: $\Pconf(\phi_t)=P(\phi_t)$;
\item For $t < t_1$, there exists a unique equilibrium state $\nu_t$ for $(I, f_\lambda, \phi_t)$; this is absolutely continuous w.r.t.\ the appropriate conformal measure $n_t$. For $t > t_1$, the unique equilibrium state for $(I, f_\lambda, \phi_t)$ is $\nu_\omega$,
the measure supported on the critical omega-limit set $\omega(c)$.
 For $t = t_1$, $\nu_\omega$ is an equilibrium state, and if 
$\lambda \in (0, \frac2{3+\sqrt 5})$ then so is the acip, denoted $\nu_{t_1}$;
\item The map $t\mapsto P(\phi_t)$ is real analytic on $ (-\infty, t_1)$. 
Furthermore $P(\phi_t) > 0$ for $t < t_1$ and $P(\phi_t) \equiv 0$
 for $t \ge t_1$, so there is a phase transition at $t = t_1$.
\end{enumerate}
\end{maintheorem}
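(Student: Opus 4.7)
The plan is to reduce everything to the induced system $(Y, F_\lambda, \Phi_t)$ analysed in \cite{BT3}, via the familiar shifted-potential construction (compare \cite{BT1,BT2,PS}). For each pair $(t,p)\in\R^2$ set $\Phi_{t,p} := \Phi_t - p\tau$, where $\tau = S_{k-1}$ on $\W_k$, and consider the Gurevich pressure $(t,p) \mapsto P_G(\Phi_{t,p})$. On the open set where it is finite (controlled by summing the geometric contributions on $|\W_k| \asymp \lambda^k$ against the Fibonacci growth of $S_{k-1}$), this function is real analytic and strictly decreasing in $p$. At $p=0$ it agrees with $P(\Phi_t)$, which by \cite{BT3} and \eqref{eq:t1} is positive for $t < t_1$ and non-positive for $t \ge t_1$. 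Hence for each $t < t_1$ there is a unique $p(t) > 0$ with $P_G(\Phi_{t,p(t)}) = 0$, and the implicit function theorem gives that $p(t)$ is real analytic on $(-\infty, t_1)$; extend by $p(t) \equiv 0$ on $[t_1, \infty)$.

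To identify $P(\phi_t) = p(t)$ and thus prove (c), I would use Abramov's formula: any $f_\lambda$-ergodic $\mu$ with $\mu(Y) > 0$ and $\int\phi_t\, d\mu > -\infty$ induces a probability $\mu_F$ on $Y$ with $\int\tau\, d\mu_F < \infty$ (Kac), and
\[
 h_\mu + \int \phi_t\, d\mu - p(t) = \frac{h_{\mu_F} + \int \Phi_{t,p(t)}\, d\mu_F}{\int \tau\, d\mu_F} \le \frac{P_G(\Phi_{t,p(t)})}{\int\tau\, d\mu_F} = 0.
\]
Ergodic measures with $\mu(Y)=0$ are supported on $\omega(c) \cup \orb(c)$; since $f_\lambda$ restricted to the Fibonacci adic system $\omega(c)$ is uniquely ergodic, the only such non-atomic invariant probability is $\nu_\omega$, which has $h_{\nu_\omega} = 0$ and, by a direct Birkhoff computation on the explicit piecewise linear slopes, $\int \log|f_\lambda'|\, d\nu_\omega = 0$. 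So $h_{\nu_\omega} + \int \phi_t\, d\nu_\omega = 0 \le p(t)$. The lower bound $P(\phi_t) \ge p(t)$ is saturated for $t < t_1$ by the Kac lift $\nu_t$ of the unique Gibbs-Markov equilibrium $\mu_{F,t}$ for $\Phi_{t,p(t)}$ on the linear induced system, and for $t \ge t_1$ by $\nu_\omega$, giving positivity below $t_1$, vanishing above, and analyticity on $(-\infty, t_1)$.

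Part (a) follows by constructing a $(\phi_t - p(t))$-conformal measure $n_t$: the linear Gibbs-Markov structure of $(Y,F_\lambda)$ via \eqref{eq:comm_diag} and \cite{StraVogt,BT3} produces an $F_\lambda$-conformal measure $\hat n_t$ for $\Phi_{t,p(t)}$, which spreads along $f_\lambda$-orbits of the partition $\{\W_k\}$ to the required $n_t$; this bounds $\Pconf(\phi_t) \le p(t)$, while the reverse follows from the variational principle applied to $\nu_t$ (or $\nu_\omega$). Part (b) is then an assembly: for $t < t_1$, $\nu_t \ll n_t$ inherits absolute continuity from $\mu_{F,t} \ll \hat n_t$, and uniqueness holds because inducing bijects ergodic equilibria with $\mu(Y) > 0$ onto those of $\mu_{F,t}$, while $\nu_\omega$ gives strictly smaller free energy; for $t > t_1$, $\nu_\omega$ is the unique equilibrium since any competitor has $\mu(Y) > 0$ and, by the Abramov identity above, strictly negative free energy; at $t = t_1$, $\nu_\omega$ is always an equilibrium, and when $\lambda \in (0, \tfrac{2}{3+\sqrt5})$ the acip $\nu_{t_1}$ (from Theorem~\ref{mainthm:Fibo}) is as well, by Ledrappier's theorem \cite{Ledrap} combined with $P(\phi_1)=0$.

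The main obstacle is the control of $P_G(\Phi_{t,p})$ in a full neighbourhood of $(t_1, 0)$: the inducing time $\tau = S_{k-1}$ grows superlinearly on deep $\W_k$, so verifying finiteness, analyticity, and strict monotonicity of the shifted Gurevich pressure (rather than merely its unshifted value $P(\Phi_t)$ treated in \cite{BT3}) requires a delicate summability analysis that precisely balances the exponential decay $|\W_k| \asymp \lambda^k$ against the Fibonacci growth of $\tau$. A secondary subtlety is the verification that $\nu_\omega$ has vanishing Lyapunov exponent in the countably piecewise linear model, which underlies the coexistence of two equilibria at $t = t_1$ in the Lebesgue-conservative regime.
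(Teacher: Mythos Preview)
Your architecture is essentially the paper's: reduce to the induced system, use Abramov to transfer free energy, identify the pressure as the unique $p$ at which the induced shifted potential has zero pressure, and handle $t\ge t_1$ via $\nu_\omega$. The zero Lyapunov exponent of $\nu_\omega$ is also treated in the paper exactly as you indicate (Birkhoff along the induced orbit, using that $F^k(x)\to c$ on $\omega(c)$ so $k/n_k\to 0$), so that ``secondary subtlety'' is benign.

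The genuine divergence is in how you locate and prove analyticity of $p(t)$. You want to work with the abstract function $(t,p)\mapsto P_G(\Phi_{t,p})$ and invoke the implicit function theorem there. The paper does not do this, and for good reason: $(Y,F_\lambda)$ is \emph{not} a BIP shift (the images $F(\W_k)=\bigcup_{i>k-2}\W_i$ shrink to $c$), so real analyticity of the Gurevich pressure in parameters is not a shelf result; in the non-BIP setting Sarig's theory gives analyticity only after positive recurrence and a spectral gap have been established, and that is exactly the content you are trying to extract. The paper sidesteps this by encoding the equation $P_G(\Phi_{t,p})=0$ concretely as $H(p,t):=\sum_k \tilde w_k^t(p)=1$, where the $\tilde w_k^t$ satisfy the explicit linear recursion \eqref{eq:tildew}. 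This $H$ is visibly analytic in $(p,t)$, one checks $\partial H/\partial p<0$ and $\partial H/\partial t<0$ by a direct recursive estimate (Proposition~\ref{prop:Umonotone}), and the IFT then yields analyticity of $p(t)$ without any appeal to abstract spectral theory. The same recursion also delivers the conformal measure (the $\tilde w_k^t$ are its weights), the invariant density via the stochastic matrix \eqref{eq:matrixG}, and the super-exponential tail $\tilde w_k^t\asymp e^{\beta k - pS_{k+1}}$ that makes $\int\tau\,d\mu_t<\infty$ automatic. So what you flag as ``the main obstacle'' is precisely where the paper's explicit recursion does the work that your abstract route would still owe; your plan is correct in outline but would need to reproduce that analysis in some form to close.
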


Let $\gamma := \frac12(1+\sqrt{5})$ be the golden ratio
and $\Gamma := \frac{2 \log \gamma}{\sqrt{-\log[\lambda(1-\lambda)]}}$.
More precise information on the shape of the pressure function is
the subject of our fourth main result.

\begin{maintheorem}\label{mainthm:thermo_Fibo}
The pressure function $P(\phi_t)$ of the countably piecewise linear Fibonacci map $f_\lambda$, $\lambda \in (0,1)$,
with potential $\phi_t$ has the following shape:
\newcounter{Lcount2}
\begin{list}{\alph{Lcount2})}
{\usecounter{Lcount2} \itemsep 1.0mm \topsep 0.0mm \leftmargin=7mm}
\item 
On a left neighbourhood of $t_1$, 
there exist $\tau_0 = \tau_0(\lambda), \tau_0' = \tau_0'(\lambda) > 0$ such that
$$
P(\phi_t) > 
\begin{cases}
\tau_0  e^{-\pi \frac{\Gamma}{\sqrt{t_1-t}}} & \text{ if } t < t_1 \le 1 \text{ and } \lambda \ge \frac12;\\
\tau_0' (1-t)^{ \frac{\log \gamma}{\log R} } & \text{ if } t < 1 \text{ and } \frac{2}{3+\sqrt 5} \le \lambda < \frac12,
\end{cases} 
$$
where $R = \frac{ \left(1+\sqrt{1-4\lambda^t(1-\lambda)^t}\right)^2 }{4\lambda^t(1-\lambda)^t}$ and $\lim_{t \to 1} \log R \sim 2(1-2\lambda)$ for 
$\lambda \sim \frac12$.
\item On a left neighbourhood of $t_1$, 
there exist $\tau_1 = \tau_1(\lambda), \tau_1' = \tau_1'(\lambda)  > 0$ such that
$$
P(\phi_t) < 
\begin{cases}
\tau_1 e^{-\frac56 \frac{\Gamma}{\sqrt{t_1-t}} } & \text{ if } t < t_1 \le 1 \text{ and } \lambda \ge \frac12;\\
\tau_1' (1-t)^{ \frac{\lambda \log \gamma}{2t(1-2\lambda)} } & \text{ if } t < 1 \text{ and } \frac{2}{3+\sqrt 5} \le \lambda < \frac12.
\end{cases} 
$$
\item If $\lambda \in (0, \frac{2}{3+\sqrt 5})$, 
then $\lim_{s \uparrow t_1}\frac{d}{ds} P(\phi_s) < 0$;
otherwise (\ie if $\lambda \in [\frac{2}{3+\sqrt 5},1)$),  $\lim_{s\uparrow t_1} P(\phi_s) = 0$.
\end{list}
\end{maintheorem}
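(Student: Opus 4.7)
The plan is to work through the induced system $(Y, F_\lambda)$ of Section~\ref{sec:cpl}. By the conformal framework developed in Section~\ref{sec:conformal_for_f} and supported by Theorem~\ref{mainthm:thermo_Fibo_global}, one has $P(\phi_t) = p$ precisely when the Gurevich pressure of the shifted induced potential $\Phi_t - p\tau$ on $(Y, F_\lambda)$ vanishes. Because $F_\lambda$ is countably piecewise linear, with cylinders $\W_k = W_k\cup\hat W_k$ of length $(1-\lambda)\lambda^k$, and $\tau$ equals the Fibonacci cutting time $S_{k-1}$ on $\W_k$, this condition reduces to an explicit generating identity
\[
 \mathcal{Z}(t, p) \;:=\; \sum_{k\ge 1} N_k\, |F_\lambda'|_{\W_k}^{-t}\, e^{-p S_{k-1}} \;=\; 1,
\]
where the multiplicities $N_k$ come from the Markov structure of $(Y, F_\lambda)$ through the commutative diagram~\eqref{eq:comm_diag} with the Stratmann--Vogt map. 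At $p = 0$ this identity recovers the value $t_1$ given in \eqref{eq:t1}.

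For parts (a) and (b) I would study the implicit solution $p = p(t)$ of $\mathcal{Z}(t, p) = 1$ as $t \uparrow t_1$. Since $S_k \sim c_F \gamma_+^{k}$ for Fibonacci cutting times, the factor $e^{-p S_{k-1}}$ imposes an effective cut-off in the summation at $k \sim K(p) := \log(1/p)/\log\gamma_+$. In the wild-attractor regime $\lambda \ge \tfrac12$ (so $t_1 \le 1$), the balance between the geometric decay $\lambda^{kt}$ and the super-geometric Fibonacci weight yields an asymptotic of the form $p \asymp \exp\!\bigl(-c\,\Gamma/\sqrt{t_1-t}\bigr)$; the precise constants $\pi$ and $\tfrac56$ in the claimed bounds come from comparing the discrete sum to its continuous Laplace analogue, with a Jacobi-theta-type identity (or Poisson summation) explaining the appearance of $\pi$. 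In the $\sigma$-finite regime $\tfrac{2}{3+\sqrt5}\le \lambda<\tfrac12$, the critical balance at $t_1 = 1$ is of a different type and gives instead a power law $p \asymp (1-t)^\beta$: the lower exponent $\log\gamma_+/\log R$ is obtained by majorising the truncated partial sum $\sum_{k\le K(p)}\lambda^{kt}$ by a doubly-geometric series, while the upper exponent $\lambda\log\gamma_+/(2t(1-2\lambda))$ comes from a Laplace-type lower bound that uses the dominant behaviour of $\mathcal{Z}$ near $k=K(p)$.

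For part (c), when $\lambda < 2/(3+\sqrt5)$ the existence of the acip from Theorem~\ref{mainthm:Fibo}, combined with the Abramov formula on $(Y, F_\lambda)$, gives $\lim_{s\uparrow 1} P'(\phi_s) = -\chi(\nu_{ac}) < 0$: the inducing time $\tau$ is integrable against the $F_\lambda$-equilibrium measure at $t_1 = 1$, which therefore projects to a finite $f_\lambda$-acip with positive Lyapunov exponent. For $\lambda \ge 2/(3+\sqrt5)$, either $\tau$ fails to be integrable (the $\sigma$-finite case, where no finite $f_\lambda$-acip exists) or $t_1 < 1$ (the wild-attractor case); in both sub-cases the upper bound in (b) forces $P(\phi_s)\to 0$ as $s\uparrow t_1$, giving the claimed continuity of the pressure at $t_1$ from the left.

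The principal technical obstacle is the sharp asymptotic inversion of $\mathcal{Z}(t, p) = 1$ in (a) and (b): the generating function is not explicitly invertible, and producing the precise constants $\pi$, $\tfrac56$, $\log\gamma_+/\log R$ and $\lambda\log\gamma_+/(2t(1-2\lambda))$ requires two-sided estimates comparing Fibonacci-exponential sums to their continuous Gaussian-integral analogues, together with careful bookkeeping of the multiplicities $N_k$ and of the slope contributions $|F_\lambda'|_{\W_k}^{-t}$ coming from the non-full-branched Markov structure inherited from $T_\lambda$.
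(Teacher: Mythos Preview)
Your approach to part (c) is essentially correct and matches the paper's argument via Proposition~\ref{prop:deriv t_1}: the dichotomy is governed by whether the inducing time is integrable against the induced equilibrium state, and the Abramov formula converts this into the statement about the left derivative of the pressure.

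Parts (a) and (b), however, rest on a genuine misconception. There is no generating identity of the form $\mathcal{Z}(t,p)=\sum_k N_k |F_\lambda'|_{\W_k}^{-t} e^{-pS_{k-1}}=1$ governing the pressure. The induced map $F_\lambda$ is not full-branched: the branch on $\W_k$ maps onto $\cup_{i\ge Q(k)+1}\W_i$, not onto all of $Y$, so the Gurevich pressure cannot be read off from a single exponential sum with fixed multiplicities. What actually characterises $p=P(\phi_t)$ is the existence of a $(\Phi_t-p\tau)$-conformal measure, and the masses $\tilde w_k^t=m_t(\W_k)$ satisfy the \emph{coupled} recursion \eqref{eq:tildew}, namely $\tilde w_j^t = e^{\beta-pS_{j-1}}\bigl(1-\sum_{k<j-1}\tilde w_k^t\bigr)$, together with the global constraint $H(p,t)=\sum_k\tilde w_k^t=1$ and $\tilde w_k^t\ge 0$ for all $k$. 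Differencing consecutive equations turns this into a second-order linear recurrence $\tilde w_{k+1}^t = e^{-pS_{k-2}}\tilde w_k^t - e^{\beta-pS_k}\tilde w_{k-1}^t$, and it is the spectral analysis of this recurrence, not Laplace or Poisson summation, that drives the estimates.

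In particular, the constant $\pi$ in the lower bound has nothing to do with Jacobi theta identities. At $p=0$ the recurrence is autonomous with characteristic equation $r^2-r+e^\beta=0$; for $t<t_2$ the roots are complex conjugate with argument $\theta=\tfrac12\sqrt{\beta'}$, so the solution $\bar w_k^t$ oscillates and first becomes negative near $k_0\approx 2\pi/\sqrt{\beta'}$. The lower bound $p(t)>\tau_0/S_{k_0}$ then comes from a perturbation argument (Lemma~\ref{lem:lowerboundp0}) showing that the $p$-dependent correction cannot rescue positivity before that index. For $\lambda<\tfrac12$ and $t$ near $1$ the roots $r_\pm$ are real, and the exponent $\log\gamma_+/\log R$ arises because $R=r_+/r_-$ controls the index at which the dominant (negative-coefficient) term overtakes the subdominant one. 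The upper bound and the constant $\tfrac56$ come from an entirely different mechanism: one rewrites the recurrence as a non-autonomous one-dimensional map $u_{k+1}=1-\tfrac{e^{\beta'-pS_{k-2}}}{4u_k}$ on the ratios $u_k=\tilde w_{k+1}^t/\tilde w_k^t\cdot e^{pS_{k-2}}$, and estimates how long the orbit takes to tunnel past the saddle-node region before the bifurcation parameter $e^{\beta'-pS_{k-2}}$ drops below $1$ (Lemmas~\ref{lem:eta}--\ref{lem:eta2}). Your cut-off heuristic $k\sim\log(1/p)/\log\gamma_+$ is correct in spirit, but the analysis needed to extract the constants is recurrence-theoretic rather than Laplace-asymptotic.
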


To put these results in context, let us discuss the results of Lopes
\cite[Theorem 3]{Lopes} on the thermodynamic behaviour of the 
Manneville-Pomeau map $g:x \mapsto x+x^{1+\alpha} \pmod 1$. 
The pressure function for this family is
$$
P(-t \log g') = \begin{cases}
\lambda(\mu_{ac})(1-t) + B(1-t)^{1/\alpha} + \text{ h.o.t.} & \text{ if } t < 1 \text{ and } \alpha \in (\frac12,1);\\
 C(1-t)^\alpha + \text{ h.o.t.} & \text{ if } t < 1 \text{ and } \alpha > 1;\\
0 & \text{ if } t \ge 1,
\end{cases}
$$
where $B, C > 0$ are constants, and $\lambda(\mu_{ac})$ is the Lyapunov exponent of the non-Dirac equilibrium state (\ie the acip). Hence the left derivative of the pressure at $t=1$ when $\alpha\in (1/2, 1)$ is $-\lambda(\mu_{ac})$.
Recall that in the acip case, due to Ledrappier's result \cite{Ledrap}, $h_{\mu_{ac}}=\lambda(\mu_{ac})$.
 Note that the transition case $\alpha=1$ corresponds to the transition
 from a finite acip (for $\alpha < 1$) to an infinite acim (for $\alpha \ge 1$).\footnote{The asymptotics of $P(t)$ in \cite[Theorem 3]{Lopes}
 don't hold for $\alpha = 1$ (personal communication  with A.O.\ Lopes), but
 since there is no acip, $P(t)$ is differentiable at $t=1$ with derivative $P'(t)=0$
 as in \cite{ITeqnat}. We don't know the higher order terms in this case.
Asymptotics of related systems are obtained
in \cite{PFK, BFKP}, namely for the Farey map $x \mapsto \frac{x}{1-x}$ if $x \in [0,\frac12]$ and $x \mapsto \frac{1-x}{x}$ if $x \in [\frac12,1]$. 
It is expected that their asymptotics also hold for the Manneville-Pomeau map 
with $\alpha = 1$.
In \cite{BLL}, a Manneville-Pomeau-like map with two neutral fixed points, both with
$\alpha = 1$, is considered, using a Hofbauer-like potential.}  
In the Manneville-Pomeau case there is no transition of Lebesgue measure changing from conservative to dissipative.
The phase transition at $t = 1$ is said to be of {\em first type} if there are
two equilibrium states (here an acip and the Dirac measure $\delta_0$);
if there is only one equilibrium state, then the phase transition 
is of {\em second type}. The exponent $1/\alpha$ is called the 
{\em critical exponent of transition}.

For Fibonacci maps, instead of a Dirac measure, there is a unique
measure $\nu_\omega$ supported on the critical $\omega$-limit set; 
it has zero entropy and Lyapunov exponent. 
Theorem~\ref{mainthm:thermo_Fibo} paints a similar picture to Lopes' result
for Manneville-Pomeau maps. In detail, we have

\begin{list}{$\bullet$}
{ \itemsep 0.5mm \topsep 0.0mm \leftmargin=7mm}
\item a phase transition of first type for $\lambda\in (0, \frac2{3+\sqrt 5})$: 
the pressure is not $C^1$ at
$t=t_1$. This is precisely the region from Theorem ~\ref{mainthm:Fibo} where
$f_\lambda$ has an acip $\mu_{ac}$, in accordance with the results from
\cite{ITeqnat}. According to Ledrappier \cite{Ledrap}, 
$h_\mu = \lambda(\mu_{ac})$ is the Lyapunov exponent, so
$\lim_{s \uparrow 1} \frac{d}{ds}P(\phi_s) = -\lambda(\mu_{ac})$. 
 Lebesgue measure is conservative here.

\item a phase transition of second type (with unique equilibrium state $\nu_\omega$ supported on $\omega(c)$) for $\lambda\in (\frac2{3+\sqrt 5}, \frac12)$: there is some minimal $n\in
\N$ such that the $n$-th left derivative $\lim_{t \uparrow t_1} \frac{d^n}{dt^n} P(\phi_t)<0$.
Thus the pressure function is $C^{n-1}$, but not $C^n$, at $t=t_1$ and
so there is an $n$-th order phase transition. 
Consequently, the critical exponent of
transition tends to infinity as $\lambda\nearrow 1/2$.  Lebesgue is still conservative here, and also for $\lambda=1/2$.

\item  a phase transition of second type
for $\lambda\in [1/2, 1)$: the pressure is $C^1$ with
$\frac{d}{dt}P(\phi_t)=0$ at $t=t_1$. By convexity, also
$\frac{d^2}{dt^2}P(\phi_t)=0$ at $t=t_1$. It is unlikely, but we cannot
a priori rule out, that the higher derivatives oscillate rapidly,
preventing the pressure function from being $C^\infty$ at $t=t_1$.  Lebesgue is dissipative for $\lambda\in (1/2, 1)$.
\end{list}

This paper is organised as follows.
In Section~\ref{sec:cpl} we introduce the countably piecewise linear 
unimodal maps and give conditions under which they produce an
induced Markov map that is linear on each of its branches.
In Section~\ref{sec:Fibo} this is applied to Fibonacci maps, and, using 
a random walk argument, the existence of an attractor and hence
Theorem~\ref{mainthm:Fibo} is proved.
Rather as an intermezzo, Section~\ref{sec:example} shows that for countably piecewise linear 
unimodal maps with infinite critical order, wild attractors
do exist beyond the Fibonacci-like combinatorics.
In Section~\ref{sec:original} 
we explain how conformal and invariant measures
of the induced system relate to conformal and invariant measures of 
the original system.
In Section~\ref{sec:conformal} we discuss the technicalities that the $2$-to-$1$ factor map from \eqref{eq:comm_diag}
poses for invariant and conformal measures; we also prove Theorem~\ref{thm:main hyp dim}.
The properties of the conformal pressure functions (existence, upper/lower bounds and nature of phase transitions)
are studied in Section~\ref{sec:conformal_for_f}; this section contains the 
proof the main part of Theorem~\ref{mainthm:thermo_Fibo}.
In Section~\ref{sec:invariant} we
prove the existence and properties of invariant measures
that are absolutely continuous w.r.t.\ the relevant conformal measures.
In the final section we present some general theory on 
countable Markov shifts due to Sarig.
This  leads up to the proof of
Theorem~\ref{mainthm:thermo_Fibo_global}, and also 
gives the final ingredient of the proof of Theorem~\ref{mainthm:thermo_Fibo}.

\section{The countably piecewise linear model}\label{sec:cpl}

Let $\N = \{ 1, 2, 3, 4, \dots \}$ and $\N_0 = \N \cup \{ 0 \}$.
Throughout $f:I \to I$ stands for a
symmetric unimodal map with unit interval $I = [0,1]$, critical point $c = \frac12$,
and $f(0) = f(1) = 0$. For $x \in [0,1]$, let $\hat x = 1-x$
be the point with the same $f$-image as $x$.
We use the same notation for sets.

Let us start by some combinatorial notation. For $n \ge 1$,
the {\em central branch} of $f^n$ is the restriction
of $f^n$ to any of the two largest one-sided neighbourhoods of $c$ on which
$f^n$ is monotone.
Due to the symmetry, the image of the left and 
right central branch is the same, and if it contains the critical point, 
then we say that $n$ is a {\em cutting time}.
We enumerate cutting times as $1 = S_0 < S_1 < S_2 < \dots$
If $f$ has no periodic attractors, $S_k$ is well-defined for all $k$,
and we will denote the point in the left (resp.\ right) central branch
of $f^{S_k}$ that maps to $c$ by $z_k$ (resp.\ $\hat z_k$).
These points are called the {\em closest precritical points}
and it is easy to see that the domains of the left (resp.\ right)
central branch of $f^{S_k}$ are $[z_{k-1}, c]$ (resp.\ $[c, \hat z_{k-1}]$).

The difference of two consecutive cutting times is again a cutting time.
Hence  (see \cite{Hof}) we can define the {\em kneading map}
$Q: \N \to \N_0$ by
\begin{equation*}
S_k - S_{k-1} = S_{Q(k)}.
\end{equation*}
A kneading map $Q$ corresponds to a sequence of cutting times
of a unimodal map if and only if it satisfies
\begin{equation}\label{eq:DifficultHofb}
\{ Q(k+j)\}_{j \ge 1} \succeq \{ Q(Q^2(k)+j)\}_{j \ge 1},
\end{equation}
for all $k \ge 1$, where $\succeq$ indicates lexicographical order (see \cite{Hof80}). Note that \eqref{eq:DifficultHofb} holds automatically if the 
kneading map is non-decreasing.

The construction of our unimodal map $f$ proceeds along the following steps:
\begin{enumerate}[label=({\Roman*}),  itemsep=0.0mm, topsep=0.0mm, leftmargin=7mm]
\item First fix a kneading map $Q$ such that
\begin{equation}
Q(k+1) > Q(Q^2(k)+1) \label{12.1}
\end{equation}
for every $k \ge 2$. This is obviously stronger than \eqref{eq:DifficultHofb}, but provides a considerable simplification
of the proof.
\item
By convention, set $z_{-1}=0$ and $\hat z_{-1}=1$.
For $j \ge 0$, choose a strictly increasing sequences of points $z_j \nearrow c = \frac12$ and 
$\hat z_j = 1-z_j \searrow c$. (The points $z_j$ will play the role of the closest precritical
points, cf.\ ($\mbox{IH}_j$) in the proof of Proposition~\ref{Proposition 12.1}.) 
Set 
\begin{equation*} \label{12.2}
 W_j:=(z_{j-1} ,z_j), \ \hat W_j:=(\hat z_j,\hat z_{j-1}) \text{ and  }
\eps_j := |W_j| = |\hat W_j| > 0.
\end{equation*}
Therefore, $\sum_{j \ge 0} \eps_j = \frac12$.
\item Define
\begin{equation}
s_j := \frac1{\eps_j} \sum_{i \ge Q(j)+1} \eps_i
= \frac{|z_{Q(j)}-c|}{|z_j-z_{j-1}|}, \label{12.4}
\end{equation}
for $j \ge 1$; these numbers will turn out to be the absolute values of the 
slopes of $F|_{W_j}$ for the induced map $F$, see \eqref{12.3} below.
\item
For $j \ge 0$, we define numbers $\kappa_j > 0$ 
that will represent the slope of $f|_{W_j}$. Let
\begin{equation}\label{12.5}
\kappa_0 := \frac1{2\eps_0}.
\end{equation}
(This will give that $f(z_0)= \kappa_0 \cdot (z_0-z_{-1}) = \frac12 = c$.)
Next, set
\begin{equation} \label{12.6}
\kappa_1 := s_1 = \frac1{\eps_1} \sum_{i \ge 1} \eps_i =
\frac{1-2\eps_0}{2 \eps_1}.
\end{equation}
(Since inducing time $S_0 = 1$ on $W_1$, it makes sense that
the slopes of $f$ and $F$ on $W_1$ are the same. In fact, we will
have $F|_{W_1}=f^{S_0}|_{W_1}=f|_{W_1}$.)
For $j \ge 2$, we set inductively
\begin{equation}
\kappa_j := \begin{cases}
\frac{s_j}{\kappa_0} \frac{\kappa_{j-1}}{s_{j-1}}
&\text{ if } Q(j-1) = 0, \\
\frac{s_j \cdot \kappa_{j-1}}
{s_{j-1} \cdot s_{Q(j-1)} \cdot s_{Q^2(j-1)+1}} &\text{ if } Q(j-1) > 0.
\end{cases} 
\label{12.7}
\end{equation}
\item
Let $f$ be the unique continuous unimodal map such that
$$
\left\{ \begin{array}{l}
f(z_{-1}) = f(\hat z_{-1}) = z_{-1} \\
Df|_{W_j} = - Df|_{\hat W_j} = \kappa_j,
\end{array} \right.
$$
so that $|f(W_i)| = \kappa_i \eps_i$ and each interval $f(W_i)$ is
adjacent to $f(W_{i+1})$.
\end{enumerate}
Thus $f$ is completely determined by the choice of $Q$ and points $z_j$.
In Section~\ref{sec:Fibo}
on Fibonacci combinatorics, we let $z_j \nearrow c$ in a geometric manner, 
or precisely,
$\eps_j = \frac{1-\lambda}{2} \lambda^j$ so that $f$ depends solely on a 
the single parameter $\lambda \in (0,1)$.
In this section, we will continue with the more general set-up.

The induced map\footnote{In later sections, the interval on which the induced map is defined will be called $Y$.} is defined as: 
\begin{equation}\label{12.3}
F:(z_0,\hat z_0) \to (z_0,\hat z_0),
\qquad F|_{W_j \cup \hat W_j} = f^{S_{j-1}}|_{W_j \cup \hat W_j}
\text{ for } j \ge 1.
\end{equation}
Since the $z_j$ will play the role of the closest precritical
points, we will have $f^{S_{j-1}}(z_j) = f^{S_{j-1}}(\hat z_j)
\in \{ z_{Q(j)}, \hat z_{Q(j)} \}$,
and therefore
\[
F(W_j) = F(\hat W_j) = \cup_{i > Q(j)} W_i \text{ or }
\cup_{i > Q(j)} \hat W_i.
\]
In Proposition~\ref{Proposition 12.1}, we will prove that $F|_{W_j}$ and $F|_{\hat W_j}$ 
are also linear.

We pose two other conditions on the sequence $(\eps_j)_{j \in \N}$,
which will be checked later on for specific examples, in particular 
the Fibonacci map. Let $x^f = f(x)$ for any point $x$.
For all $j \ge 2$:
\begin{equation}
\frac{s_j}{\kappa_j} |c^f - z^f_j|  =
\frac{s_j}{\kappa_j} \sum_{i = j+1}^{\infty} \kappa_i \eps_i
\le \eps_{Q(j)}, \label{12.8}
\end{equation}
and
\begin{equation}
\frac{s_j}{\kappa_j} |c^f - z^f_j|  =
\frac{s_j}{\kappa_j} \sum_{i = j+1}^{\infty} \kappa_i \eps_i
\le \frac{\eps_{Q^2(j)+1}}{s_{Q(j)}} \quad \text{ whenever } Q(j) > 0.
\label{12.9}
\end{equation}

\begin{proposition}\label{Proposition 12.1}
Let $f$ be the map constructed above, \ie assume that
\eqref{12.1}-\eqref{12.9} hold.
Then $Q$ is the kneading map of $f$,
and the induced map $F$ is linear on each set $W_j$ and $\hat W_j$, having
slope $\pm s_j$.
\end{proposition}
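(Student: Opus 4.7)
My plan is to prove the two claims by strong induction on $j \ge 1$, simultaneously establishing three properties of the constructed map $f$: (A$_j$) $z_j$ is genuinely the closest left precritical point corresponding to the $j$-th cutting time, so $f^{S_{j-1}}(z_j)=c$ and $S_j=S_{j-1}+S_{Q(j)}$; (B$_j$) the induced branch $F|_{W_j}=f^{S_{j-1}}|_{W_j}$ is affine with slope of absolute value $s_j$; and (C$_j$) the intermediate images $f^i(W_j)$ for $0\le i<S_{j-1}$ each lie inside a single interval on which $f$ is affine. Evidently (C$_j$) implies (B$_j$), because a composition of affine maps is affine, and its slope is the product of the intermediate slopes, which the recursive definition (12.7) is engineered to telescope to exactly $\pm s_j$.

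The base case $j=1$ is immediate from (12.5)--(12.6): $F|_{W_1}=f|_{W_1}$ has slope $\kappa_1=s_1$, and our arrangement of the $z_j$'s places $f(z_1)$ at one of $z_{Q(1)}, \hat z_{Q(1)}$, yielding (A$_1$). By symmetry everything transfers to $\hat W_1$.

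For the inductive step I would decompose $f^{S_{j-1}} = f^{S_{Q(j-1)}}\circ f^{S_{j-2}}$ using $S_{j-1}=S_{j-2}+S_{Q(j-1)}$. By (A$_{j-1}$) and (C$_{j-1}$), $f^{S_{j-2}}$ is affine on the left central-branch domain $[z_{j-2},c]$, hence on $W_j\subset[z_{j-2},c]$, with $f^{S_{j-2}}(z_{j-1})\in\{z_{Q(j-1)},\hat z_{Q(j-1)}\}$. The image $f^{S_{j-2}}(W_j)$ has length exactly $(s_j/\kappa_j)\,|c^f-z_j^f|$, so condition (12.8) guarantees that it stays inside the single interval $\W_{Q(j-1)}$, preventing any crossing of $c$ during the next $S_{Q(j-1)}$ iterations. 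When $Q(j-1)>0$, a second descent is needed; condition (12.9) then confines the image to $\W_{Q^2(j-1)+1}$, keeping $f^{S_{Q^2(j-1)+1}}$ affine on it. Hypothesis (12.1) is precisely what makes this two-level descent sufficient --- the combinatorial refinement terminates rather than cascading through ever deeper layers of the kneading map. Reading off the endpoint images yields $f^{S_{j-1}}(z_j)=c$ (giving (A$_j$)), and the slopes telescope via (12.7) to $|s_j|$ (giving (B$_j$) and (C$_j$)).

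The principal obstacle is the bookkeeping: one must track which $W_k$ or $\hat W_k$ houses each intermediate image and verify that (12.8)--(12.9) are exactly tight enough to prevent the orbit of $W_j$ from straddling $c$ at any of the $S_{j-1}$ intermediate steps. The two inequalities are tailor-made for this role: (12.8) handles first-level containment, and (12.9) the second-level containment needed because of the repeated composition. Sign-tracking (whether each image lands on the $z$-side or $\hat z$-side) is then a matter of unimodal symmetry and introduces no genuine difficulty once the combinatorial setup has been fixed.
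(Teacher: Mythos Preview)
Your overall plan---induction, decompose $f^{S_{j-1}}=f^{S_{Q(j-1)}}\circ f^{S_{j-2}}$, use \eqref{12.8}--\eqref{12.9} to control sizes---matches the paper's. But the induction hypothesis you have chosen is too weak to close the loop.

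The problem is your use of $(\mathrm{C}_{j-1})$. That hypothesis concerns the orbit of $W_{j-1}=(z_{j-2},z_{j-1})$, yet in the inductive step you need linearity of $f^{S_{j-2}}$ on $W_j=(z_{j-1},z_j)$, a \emph{different} interval. Knowing that each $f^i(W_{j-1})$ lies in a single affine piece does not tell you that $f^i(W_j)$ does; the two intervals only share the boundary point $z_{j-1}$, and the image of $W_j$ could straddle a break point even when that of $W_{j-1}$ does not. Your sentence ``by $(\mathrm{A}_{j-1})$ and $(\mathrm{C}_{j-1})$, $f^{S_{j-2}}$ is affine on $[z_{j-2},c]$'' is therefore unjustified---indeed $f$ itself is not affine on $[z_{j-2},c]$, since that interval contains infinitely many $W_k$. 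Relatedly, your hypotheses omit the location of $c_{S_{j-1}}=f^{S_{j-1}}(c)$, which is exactly what pins down where the next image lands.

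The paper's fix is to carry a stronger hypothesis: $f^{S_{j-1}-1}$ is linear with slope $s_j/\kappa_j$ on the interval $(c^f,z_{j-1}^f)=f\bigl((z_{j-1},c)\bigr)$, together with $f^{S_{j-1}}(z_{j-1})=c$ and $c_{S_{j-1}}\in \W_{Q(j)}$. Since $f(W_j)\subset (c^f,z_{j-1}^f)$, this immediately gives linearity of $f^{S_{j-1}}|_{W_j}$ with slope $\kappa_j\cdot(s_j/\kappa_j)=s_j$. In the inductive step one then tracks $(c^f,z_{j-1}^f)$ through $f^{S_{j-2}-1}$, then $f^{S_{Q(j-1)-1}}$, then $f^{S_{Q^2(j-1)}}$; condition \eqref{12.9} (with index $j-1$) confines the image to $\W_{Q^2(j-1)+1}$, and \eqref{12.8} (with index $j$) places $c_{S_{j-1}}$ in $\W_{Q(j)}$. (Your length formula ``$f^{S_{j-2}}(W_j)$ has length $(s_j/\kappa_j)|c^f-z_j^f|$'' is also off: that quantity is $|f^{S_{j-1}}((z_j,c))|$, not $|f^{S_{j-2}}(W_j)|$.) Finally, note the indexing in $(\mathrm{A}_j)$: it is $f^{S_{j-1}}(z_{j-1})=c$, while $f^{S_{j-1}}(z_j)=z_{Q(j)}$ or $\hat z_{Q(j)}$, not $c$.
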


\begin{proof}
We argue by induction, using the induction hypothesis, for $j \ge 2$,
\begin{equation*}
\left\{ \begin{array}{l}
f^{S_{j-1}-1}|_{(c^f,z^f_{j-1})} \text{ is linear, with slope }
\frac{s_j}{\kappa_j}. \\
f^{S_{j-1}}(z_{j-1}) = c. \\
f^{S_{j-1}}(c) \in W_{Q(j)} \text{ or } \hat W_{Q(j)}.
\end{array} \right. \qquad (\mbox{IH}_j)
\end{equation*}
From the first statement, it follows immediately that
\begin{equation}
f^{S_{j-1}}|_{W_j} \text{ is linear, with slope } s_j,
\text{ for } j \ge 1. \label{12.10}
\end{equation}
From this and the fact that $f^{S_{j-1}}(z_{j-1}) = c$,
it follows that
\begin{equation}
f^{S_{j-1}}(z_j) = f^{S_{j-1}}(z_{j-1}) \pm s_j \eps_j =
c \pm \sum_{i \ge Q(j)+1} \eps_i = z_{Q(j)} \text{ or }
\hat  z_{Q(j)}. \label{12.11}
\end{equation}

Let us prove $(\mbox{IH}_j)$ for $j=2$.
It is easily checked that $f(z_0)=f(\hat z_0)=c=\frac12$, and hence $f(c) \in
\hat W_0$.
$f(z_1) = \frac12+\kappa_1\eps_1
= \frac12 + \frac12 - \eps_0 = \hat z_0$.
So $f^{S_1}(z_1) = c$ and because $c^f \in \hat W_0$,
$f^{S_1-1}|_{(c^f,z^f_1)} = f|_{(c^f,\hat z_0)}$
is also linear, with slope $\kappa_0=\frac{s_2}{\kappa_2}$.
Next we check the position of $f^{S_1}(c)$. By the above formula, and the
additional assumption \eqref{12.8},
\begin{eqnarray*}
f^{S_1}(c) &=& f^{S_1}(z_2) - |f^{S_1-1}((c^f,z^f_2))| \\
&=& z_{Q(2)}-\frac{s_2}{\kappa_2}|c^f-z^f_2| \ge z_{Q(2)} - \eps_{Q(2)}
= z_{Q(2)-1}.
\end{eqnarray*}
Hence $f^{S_1}(c) \in W_{Q(2)}$.

Next assume that $(\mbox{IH}_i)$ holds for
$i < j$.
Using \eqref{12.11} and $(\mbox{IH}_{Q(j-2)})$ subsequently, we get
$$
f^{ S_{j-1} }(z_{j-1}) = f^{ S_{Q(j-1)} } \circ f^{S_{j-2}}(z_{j-1}) =
f^{S_{Q(j-1)}}(z_{Q(j-1)}) = c.
$$
Because $(c^f,z^f_{j-1}) \subset (c^f,z^f_{j-2})$, $(\mbox{IH}_{j-1})$
yields that
$$
f^{S_{j-2}-1}|_{(c^f,z^f_{j-1})} \text{ is linear with slope }
\frac{s_{j-1}}{\kappa_{j-1}}.
$$
By \eqref{12.11} and $(\mbox{IH}_{j-1})$, its image is the interval
$(z_{Q(j-1)},c_{S_{j-2}}) \subset W_{Q(j-1)}$ or $\hat W_{Q(j-1)}$.
Now if $Q(j-1)=0$, then
$$
f^{S_{j-1}-1}|_{(c^f,z^f_{j-1})} = f \circ f^{S_{j-2}-1}|_{(c^f,z^f_{j-1})}
\text{ is linear with slope }
\kappa_0 \frac{s_{j-1}}{\kappa_{j-1}}. $$
By the first part of the definition of $\kappa_j$, this slope is equal to
$\frac{s_j}{\kappa_j}$.
If $Q(j-1) > 0$ then
$$
f^{S_{j-1}-1}|(c^f,z^f_{j-1}) =
f^{S_{Q^2(j-1)}} \circ f^{S_{Q(j-1)-1}} \circ f^{S_{j-2}-1}|_{(c^f,z^f_{j-1})}.
$$

\begin{figure}[ht]
\unitlength=10mm
\begin{picture}(11,8)(0,0) \let\ts\textstyle
\put(0,7){\line(1,0){11}}
\put(4,7){\line(0,1){0.1}} \put(3.9,6.7){$c$}
\put(7,7){\line(0,1){0.1}} \put(6.8,6.7){$\hat z_j$}
\put(10,7){\line(0,1){0.1}} \put(9.8,6.7){$\hat z_{j-1}$}
\put(0,5){\line(1,0){11}}
\put(1,5){\line(0,1){0.1}} \put(0.7,4.7){$z_{Q(j-1)-1}$}
\put(4,5){\line(0,1){0.1}} \put(3.7,4.7){$c_{S_{j-2}}$}
\put(7,5){\line(0,1){0.1}}
\put(10,5){\line(0,1){0.1}} \put(9.8,4.7){$\hat z_{Q(j-1)}$}
\put(0,3){\line(1,0){11}}
\put(1,3){\line(0,1){0.1}} \put(0.9,2.7){$c$}
\put(2.5,3){\line(0,1){0.1}} \put(1.7,2.7){$\hat z_{Q^2(j-1)+1}$}
\put(4,3){\line(0,1){0.1}} \put(3.5,2.7){$c_{S_{j-1}-S_{Q^2(j-1)}}$}
\put(7,3){\line(0,1){0.1}}
\put(10,3){\line(0,1){0.1}} \put(9.7,2.7){$\hat z_{Q^2(j-1)}$}
\put(0,1){\line(1,0){11}}
\put(1,1){\line(0,1){0.1}} \put(0.7,0.7){$c_{S_{Q^2(j-1)}}$}
\put(3,1){\line(0,1){0.1}} \put(2.5,0.7){$z_{Q(j)-1}$}
\put(4,1){\line(0,1){0.1}} \put(3.8,0.7){$c_{S_{j-1}}$}
\put(7,1){\line(0,1){0.1}} \put(6.8,0.7){$z_{Q(j)}$}
\put(10,1){\line(0,1){0.1}} \put(9.9,0.7){$c$}
\put(0,6.7){\vector(0,-1){1.4}} \put(0.2,6){$f^{S_{j-2}}$}
\put(0,4.7){\vector(0,-1){1.4}} \put(0.2,4){$f^{S_{Q(j-1)-1}}$}
\put(0,2.7){\vector(0,-1){1.4}} \put(0.2,2){$f^{S_{Q^2(j-1)}}$}
\end{picture}
\caption{Position of various precritical points and their images.}
\label{fig2}
\end{figure}
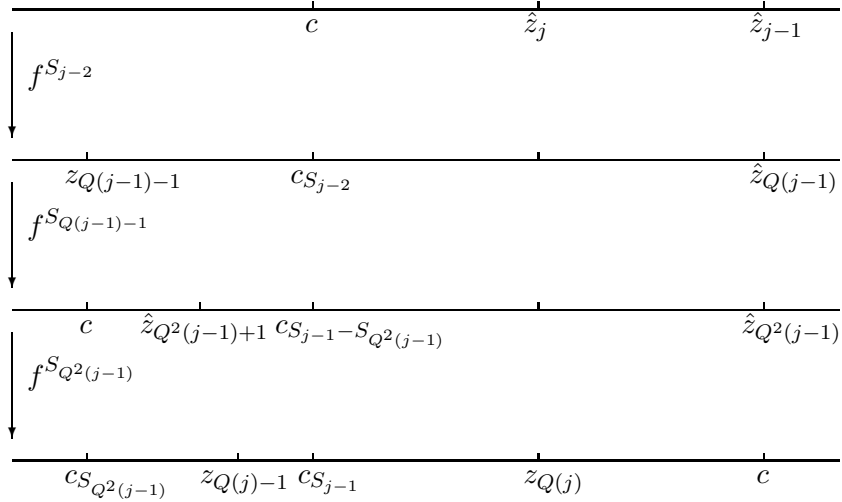

By \eqref{12.10}, $f^{S_{Q(j-1)-1}}|_{W_{Q(j-1)}}$ is linear with slope
$s_{Q(j-1)}$.
Hence
$f^{S_{Q(j-1)-1}} \circ f^{S_{j-2}-1}|_{(c^f,z^f_{j-1})}$
is linear with slope
$s_{Q(j-1)} \frac{s_{j-1}}{\kappa_{j-1}}$.
By \eqref{12.11}, its image is the interval
$$
(z_{Q^2(j-1)},c_{S_{j-2}+S_{Q(j-1)-1}}) = (z_{Q^2(j-1)},c_{S_{j-1}-S_{Q^2(j-1)}}).
$$
By \eqref{12.9}, the length of this interval is
$|c^f-z^f_{j-1}| s_{Q(j-1)} \frac{s_{j-1}}{\kappa_{j-1}} \le
\eps_{Q^2(j-1)+1}$, so
$$
(z_{ Q^2(j-1) },c_{ S_{j-1}-S_{Q^2(j-1)} }) \subset W_{Q^2(j-1)+1}
\text{ or } \hat W_{ Q^2(j-1)+1 }.
$$
By \eqref{12.10}, $f^{S_{Q^2(j-1)}}|_{W_{Q^2(j-1)+1}}$ is also linear, with
slope $s_{Q^2(j-1)+1}$.
It follows that
$f^{S_{Q^2(j-1)}} \circ f^{S_{Q(j-1)-1}} \circ
f^{S_{j-2}-1}|_{(c^f,z^f_{j-1})}$ is linear with slope
$s_{Q^2(j-1)+1} s_{Q(j-1)} \frac{s_{j-1}}{\kappa_{j-1}}$.
The second part of \eqref{12.7} gives that
$f^{S_{j-1}-1}|(c^f,z^f_j)$ is linear with slope
$\frac{s_j}{\kappa_j}$,
as asserted.
By \eqref{12.8}, the length of the image is
$|c^f-z^f_j| \frac{s_j}{\kappa_j} \le
\eps_{Q(j)}$.
Formula \eqref{12.11} yields $f^{S_{j-1}}(z_j) = z_{Q(j)}$. Hence we obtain
$$
z_{Q(j)} > f^{S_{j-1}}(c) \ge z_{Q(j)}-\eps_{Q(j)}
$$
or
$$\hat z_{Q(j)} < f^{S_{j-1}}(c) \le \hat z_{Q(j)}+\eps_{Q(j)}.$$
In other words, $f^{S_{j-1}}(c) \in W_{Q(j)}$ or $\hat W_{Q(j)}$.
This concludes the induction.
(Notice that
$\frac{|c_{S_{j-1}}-z_{Q(j)}|}{|z_{Q(j)-1}-z_{Q(j)}|} =
\frac1{\eps_{Q(j)}} \frac{s_j}{\kappa_j} |c^f-z^f_{j-1}|$.)
\end{proof}

\section{The Fibonacci case}\label{sec:Fibo}

In this section we prove Theorem~\ref{mainthm:Fibo}.
Let $\ph_n(x) = j$ if $F^n(x) \in W_j \cup \hat W_j$.
With respect to the existence of wild attractors and the random walk
generated by
$F$, we are in particular interested in the conditional expectation
(also called {\em drift}) 
\begin{equation}
\E(\ph_n-k \mid \ph_{n-1} = k) =
\frac{\sum_{i \ge Q(k)+1} (i-k)\eps_i}{\sum_{i \ge Q(k)+1} \eps_i}
= \frac{\sum_{i \ge Q(k)+1} i\eps_i}{\sum_{i \ge Q(k)+1} \eps_i}
- k.
\label{12.12}
\end{equation}
Drift in the setting of Fibonacci maps seems to be used first in \cite{KN}.
Note that here that the expectation is with respect to Lebesgue measure.

\begin{proof}[Proof of Theorem~\ref{mainthm:Fibo}]
We attempt to solve the problem for $\eps_j = |W_j| = |\hat W_j| = \frac{1-\lambda}{2} \lambda^j$, so $\sum_{j \ge 0} \eps_j = \frac12$.
By formula \eqref{12.4},
\begin{equation*}
\begin{cases}
s_1 = \pm \frac1{1-\lambda} \\
s_j = \pm \frac1{\lambda(1-\lambda)} \text{ for } j \ge 2.
\end{cases}
\end{equation*}
(Note that the slopes $s_j \ge 4$, with the minimum assumed at $\lambda = \frac12$.)
Using \eqref{12.7}, we obtain for the slope $\kappa_j = f'(x)$, $x \in W_j$.
\begin{equation}\label{eq:kappa}
\kappa_j = \begin{cases} 
\frac1{1-\lambda} & j = 0, 1; \\
\frac1{\lambda} & j = 2; \\
\frac{(1-\lambda)}{\lambda} &  j = 3; \\
\frac{(1-\lambda)^3}{\lambda} &  j = 4; \\
\frac{\lambda^{2j}(1-\lambda)^{2j}}{\lambda^{10}(1-\lambda)^5}
& j \ge 5.
\end{cases}
\end{equation}
Let us first check \eqref{12.8} and \eqref{12.9}.
For simplicity, write $\eps_j = C_1 \lambda^j$ and
$\kappa_j = C_2 \omega^j$ where $\omega=\lambda^2(1-\lambda)^2$. Then 
\begin{eqnarray*}
\frac{s_j}{\kappa_j} \sum_{i = j+1}^{\infty} \kappa_i \eps_i
\le \eps_{Q(j)}  
&\equi& \sum_{i = j+1}^{\infty} C_1C_2 (\lambda \omega)^i
\frac1{\lambda(1-\lambda)} \frac1{C_2 \omega^j}
\le C_1 \lambda^{j-2} \\ 
&\equi& \frac{\lambda^{j+1}\omega^{j+1}}{1-\lambda\omega}
\frac1{\lambda(1-\lambda) \omega^j}
\le \lambda^{j-2} \\[2mm] 
&\equi&\lambda^4(1-\lambda) \le 1-\lambda^3(1-\lambda)^2.
\end{eqnarray*}
This is true for every $\lambda \in (0,1)$.
Checking \eqref{12.9} for $Q(j) > 0$, we get
\begin{eqnarray*}
\frac{s_j}{\kappa_j} \sum_{i = j+1}^{\infty} \kappa_i \eps_i
\le \frac{\eps_{Q^2(j)+1}}{s_{Q(j)}}  &\equi&
\frac{\lambda^{j+1}\omega^{j+1}}{1-\lambda\omega}
\frac1{\lambda(1-\lambda) \omega^j}
\le \lambda^{j-3}\lambda(1-\lambda) \\
&\equi&\lambda^4 \le 1-\lambda^3(1-\lambda)^2. 
\end{eqnarray*}
Again, this is true for all $\lambda \in (0,1)$.

Let us compute the order $\ell$ of the critical point.
Indeed, $|Df(x)| = O(\lambda^{2j}(1-\lambda)^{2j} )$
and $|x-c| = O(\lambda^j)$ if $x \in W_j$.
On the other hand $|Df(x)| = O(|x-c|^{\ell-1})$.
Therefore
$$
\ell = 1 + \frac{\log \omega}{\log \lambda} =
3 + \frac{2\log(1-\lambda)}{\log \lambda}.
$$
Consider \eqref{12.12} again. For $k \ge 2$, the {\em drift} is 
\begin{equation*}
\drift(\lambda) := \E(\ph_n-k \mid \ph_{n-1} = k) =
\frac{\sum_{i \ge k-1} i\eps_i}{\sum_{i \ge k-1} \eps_i} - k
= \frac{\lambda}{(1-\lambda)} - 1 = \frac{2\lambda-1}{1-\lambda}.
\end{equation*}
Hence $\E(\ph_n-k \mid \ph_{n-1} = k) > 0$ if
$\lambda > 1-\lambda$, \ie $\lambda > \frac12$.
The second moment
$$
\frac{\sum_{i \ge Q(k)+1} (i-k)^2\eps_i}{\sum_{i \ge Q(k)+1}
\eps_i} =
\frac{\lambda^2}{(1-\lambda)^2} - 2\frac{\lambda}{1-\lambda} + 1
$$
is uniformly bounded, and therefore also the variance.
So as in the proof of \cite[Theorem 1]{BT3}, for $\lambda > \frac12$,
\ie a critical order larger than $5$, the
Fibonacci map $f$ exhibits a wild attractor. 

Now we will calculate for what values of $\lambda$, $f$ has an infinite
$\sigma$-finite measure.
First take $\lambda < \frac12$.
Then $F$ (considered as a Markov process) is recurrent, and therefore has
an invariant probability measure $\mu$.
Let $(A_{i,j})_{i,j}$ be the transition matrix corresponding to $F$,
and let $(v_i)_i$ be the invariant probability vector, \ie
left eigenvector with eigenvalue $1$. As $F$ is a Markov map, and $F$ is
linear on each state $W_k$, we obtain $\mu(W_k) = v_k$.
So let us calculate this.

\begin{equation*}
A_{i,j} = \left\{ \begin{array}{ll}
0                              & \text{ if } j \le Q(i), \\
(1-\lambda)\lambda^{j-(Q(i)+1)}  & \text{ if } j > Q(i),
\end{array}\right.
\end{equation*}

or in matrix form
\begin{equation}
(A_{i,j})_{i,j} = (1-\lambda)
\left( \begin{array}{ccccccc}
1      & \lambda & \lambda^2 & \lambda^3 & \lambda^4 & \hdots   & \hdots \\
1      & \lambda & \lambda^2 & \lambda^3 & \lambda^4 & \hdots   & \hdots \\
0      & 1      & \lambda   & \lambda^2 & \lambda^3 & \lambda^4 & \hdots \\
0      & 0      & 1        & \lambda   & \lambda^2 & \lambda^3 & \hdots \\
\vdots & \vdots & 0        & 1        & \lambda   & \lambda^2 & \hdots \\
\vdots & \vdots & \vdots   & \vdots   & \vdots   & \vdots   & \ddots
\end{array} \right) .\label{eq:A}
\end{equation}

As in \cite[Theorem 1]{BT3}, this matrix has a unique normalised eigenvector:
\begin{equation}\label{eq:v}
v_i = \frac{1-2\lambda}{\lambda} \left( \frac{\lambda}{1-\lambda} \right)^i
\text{ for } \lambda < \frac12.
\end{equation}
According to \cite[Theorem 2.6]{Btams},
$f$ has a finite measure if and only if
\begin{equation} \label{12.14}
\sum_k S_{k-1} \mu(W_k) < \infty.
\end{equation}
If \eqref{12.14} fails, then $f$ has an absolutely continuous
$\sigma$-finite measure. This follows because $f$ is conservative, and
$\omega(c)$ is a Cantor set \cite{HK}.
In the Fibonacci case $S_{k-1} \sim \gamma^{k-1}$, where $\gamma =
\frac{1+\sqrt5}{2}$ is the golden mean. Since $\mu(W_k) = \beta_i \rho^i$
for $\beta_i \equiv \lambda$, as
we saw above, we obtain $\rho > \frac1{\gamma}$ if and only if
$\frac{1+\sqrt5}{2} \frac{\lambda}{1-\lambda} > 1$, \ie
$\lambda > \frac{2}{3+\sqrt5}$. This corresponds to the critical order
$\ell = 4$.
Therefore there exists a $\sigma$-finite measure for all
$\frac{2}{3+\sqrt5} \le \lambda < \frac12$, and a finite measure for
$0 < \lambda < \frac{2}{3+\sqrt5}$.
\end{proof}

\begin{remark} Since $c_{S_k} \in W_{k-1} \cup \hat W_{k-1}$ for
every $k \ge 1$,
we obtain $|Df^{S_{Q(k+1)}}( c_{S_k} )| =
|Df^{S_{k-1}}( c_{S_k} )| = \left|Df^{S_{k-1}}|_{W_k} \right| \cdot \left|Df^{S_{k-3}}|_{W_{k-2}} \right|
= [\lambda(1-\lambda)]^{-2}$.
Therefore
$|Df^{S_j}(c_1| \approx \kappa_j [\lambda(1-\lambda)]^{2j}
= \lambda^{-10} (1-\lambda)^{-5}$,
which is uniformly bounded in $j$.
Therefore the
Nowicki-van Strien summability condition (see \cite{NowStr}) fails for all $\lambda \in (0,1)$.
\end{remark}

\begin{remark}
As proved in \cite[Theorem B]{BT3}, $F_\lambda$ (or equivalently $T_\lambda$) is null recurrent w.r.t.\ Lebesgue 
when $\lambda = \frac12$.
\end{remark}

\section{An example of a wild attractor for $k-Q(k)$ unbounded}\label{sec:example}

In \cite{Btams} it was shown that smooth unimodal maps for which
$k-Q(k)$ is unbounded cannot have any wild attractors,
for any large but finite value of the critical order.
There are very few results known for unimodal maps
with flat critical points (\ie $\ell = \infty$), although we mention \cite{BenMis, Zwei_flat} and \cite{LevSw}, which deal with Lebesgue conservative Misiurewicz 
maps and infinitely renormalisable dynamics respectively.
The next example serves as a model for a unimodal map with infinite critical
order, suggesting that \cite[Theorem 8.1]{Btams} doesn't hold
anymore: There exists countably piecewise linear maps with kneading map
$Q(k) = \lfloor rk \rfloor$, $r \in (0,1)$ that have a wild attractor.
\\[2mm]
{\bf Example 1:}
Consider maps with kneading map
$$Q(k) =  \lfloor rk \rfloor$$
for some $r \in (0,1)$ and $k$ large. Here $ \lfloor x \rfloor$ indicates
the integer part of $x$. Since $Q$ is non-decreasing, \eqref{eq:DifficultHofb} holds and unimodal maps with this kneading map indeed exist.

Let $\alpha$ be such that $\frac{1}{\alpha-1} + \log r > 0$.
Take $\eps_k = C k^{-\alpha}$,
where $C$ is the appropriate normalising constant: $C \approx \alpha-1$.
This suffices to compute the expectation from \eqref{12.14}, at least for
large values of $k$. But instead of $\ph_n$, we prefer to look at
$\log \ph_n$. It is clear that $\log \ph_n(x) \to \infty$
if and only if $\ph_n(x) \to \infty$. So it will have the same
consequences. The advantage is that in this way we can keep the second
moment bounded.

We will calculate the expectation for large values of $k$. Therefore we
will write $rk$ for $Q(k)+i = \lfloor rk \rfloor + i$ and
$r^2k$ for $Q^2(k)+i =  \lfloor rk \rfloor + i$, where $i \in \{-1,0,1,2\}$.
We will also pass to integrals to simplify the calculations.
\begin{align*}
\E(\log \ph_n-\log k \mid \ph_{n-1} = k)
& =
\frac{\sum_{i \ge Q(k)+1} \eps_i \log i}
{\sum_{i \ge Q(k)+1} \eps_i}
- \log k \\
& \approx
\frac{\int_{rk}^{\infty} t^{-\alpha} \log t  dt}
{ \int_{rk}^{\infty} t^{-\alpha} dt}
- \log k \\
& = \frac1{\alpha-1} + \log r.
\end{align*}
This is positive by the choice of $\alpha$.
For the second moment we get
$$\alignedat1
&\E((\log \ph_n-\log k)^2 \mid \ph_{n-1} = k)
=
\frac{\sum_{i \ge Q(k)+1} (\log i - \log k)^2 \eps_i}
{\sum_{i \ge Q(k)+2} \eps_i} \\
&\approx
\frac{\int_{rk}^{\infty} t^{-\alpha} (\log t - \log k)^2  dt}
{ \int_{rk}^{\infty} t^{-\alpha} dt}
=
\log^2 r + \frac2{\alpha-1} \log r + \frac2{(\alpha-1)^2}.
\endalignedat$$
which is uniformly bounded in $k$.
Therefore, the induced map has drift to $c$, and thus is Lebesgue dissipative.

For the slopes of the induced map, and the original map we get the
following:
$$
s_j = \frac1{\eps_j} \sum_{i \ge Q(j)+1} \eps_i
\approx j^{\alpha} \int_{rj}^{\infty} t^{-\alpha} dt
= j \frac{r^{1-\alpha}}{\alpha-1},
$$
whence
$$
\kappa_j \approx \frac{\kappa_{j-1}}{s_{j-1}}
\frac{s_j}{s_{[rj \rfloor} s_{[r^2j \rfloor} } \approx
\kappa_{j-1} \frac{(\alpha-1)^2}{r^{5-2\alpha}} \frac1{j^2}
= O(B^j (j!)^{-2})
$$
for $B = \frac{(\alpha-1)^2}{r^{5-2\alpha}}$.
Next we check conditions \eqref{12.8} and \eqref{12.9}. Because
$\frac{\eps_{Q^2(j)+1}}{s_{Q(j)}} \le \eps_{Q(j)}$,
it suffices to check \eqref{12.9}. For $j$ sufficiently large,
\begin{eqnarray*}
\frac{s_j}{\kappa_j} \sum_{i = j+1}^{\infty} \kappa_i \eps_i
&\approx& j \frac{r^{1-\alpha}}{\alpha-1} \frac{(j!)^2}{B^j}
\left( \frac{B^{j+1}}{(j+1)!^2}C(j+1)^{-\alpha} +
\frac{B^{j+2}}{(j+2)!^2}C(j+2)^{-\alpha} + \cdots \right) \\
&\le& \frac{r^{1-\alpha}}{\alpha-1}
C B (j+1)^{-\alpha-1} \cdot r^{2-2\alpha} \\
&<& C (\alpha-1) r^{-\alpha-2} j^{-\alpha-1}
\approx \frac{\eps_{Q^2(j)+1}}{s_{Q(j)}}
\end{eqnarray*}
Hence, asymptotically there are no restrictions to
build a piecewise linear map for this kneading map.

The critical order of this map is infinite. Indeed, the slope on $(z_{j-1},z_j)$
is $\kappa_j \approx \frac{B^j}{(j!)^2}$.
$|c-z_{j-1}| = \sum_{i=j}^{\infty} \eps_j \approx (\alpha-1)
\int_j^{\infty} t^{-\alpha} dt = j^{1-\alpha}$.
So the critical order $\ell$ must satisfy
$$
\ell j^{(1-\alpha)(\ell-1)} = O\left(\frac{B^j}{(j!)^2}\right).
$$
This is impossible for finite $\ell$.

\section{Projecting thermodynamic formalism to the original system}
\label{sec:original}

In order to understand the thermodynamic properties of our systems $(I,f_\lambda)$ and $(Y, F_\lambda)$ more deeply, we need the definition of conformal measure.  Since we want to use this notion for both of these systems, we define it for general dynamical systems and potentials which preserve the Borel structure (so we implicitly assume our phase space is a topological space). 

\begin{definition}
Suppose that $g:X\to X$ is a dynamical system and $\phi:X\to [-\infty, \infty]$ is a potential, both preserving the Borel structure.  Then a measure $m$ on $X$ is called \emph{$\phi$-conformal} if for any measurable set $A\subset X$ on which $g:A\to g(A)$ is a bijection, 
$$m(g(A))=\int_Ae^{-\phi}~dm.$$
\label{def:conformal}
\end{definition}

For the geometric potential $\phi_t = -t\log|Df_\lambda|$ of the original system $(I,f_\lambda)$,
we want to determine for which
potential shift there is a $(\phi_t-p)$-conformal measure,
and potentially an invariant measure equivalent to it.
For a general potential $\phi$ for $(I,f_\lambda)$, the {\em induced potential}
is defined as
$$
\Phi(x) = \sum_{j=0}^{\tau(x)-1} \phi \circ f_\lambda(x),
$$
and hence it contains the inducing time in a fundamental way.
Even if $\phi$ is constant (or shifted by a constant amount $p$),
the induced potential is no longer constant (and shifted by $\tau p$).  
More concretely, for potential $\phi_t-p$, the induced potential is $ -t\log|F_\lambda'|-\tau p$, where $\tau p$ is 
the shift by the scaled inducing time $\tau_i = S_{i-1}$ on 
$W_i \cup \hat W_i$.  
In Lemma~\ref{lem:conformal-induced} below we prove the connection between a $(\phi_t -p)$-conformal measure for $(I, f_\lambda)$ and a $(\Phi_t-p\tau)$-conformal measure for $(Y, F_\lambda)$.
 
For $n\ge 1$ we define the set of \emph{$n$-cylinders} for $F_\lambda$ to be the collection of maximal intervals on which $F_\lambda^n$ is a homeomorphism. It is natural to denote such an $n$-cylinder 
by $C_{i_0\dots i_{n-1}}$, 
if for each $0\le k\le n-1$, $F_\lambda^k(C_{i_0\dots i_{n-1}}) \subset  W_{i_k}$ or $F_\lambda^k(C_{i_0\dots i_{n-1}}) \subset \hat W_{i_k}$.  
The sequence $i_0\cdots i_{n-1}$ is called the \emph{address} of the $n$-cylinder.  Observe that for each such address there are two $n$-cylinders:  we denote the one to the left of $c$ by $C_{i_0\cdots i_{n-1}}$ and that on the right by $\hat C_{i_0\cdots i_{n-1}}$, and let $(C\cup \hat C)_{i_0\ldots i_{n-1}}$ be the union of these.  Only certain sequences $i_0\cdots i_{n-1}$ can be realised as addresses, specifically we require $i_k\le i_{k-1}+1$ for $1\le k\le n-1$; we call such addresses \emph{admissible}.   
Notice that for any $x\in C_{i_0\dots i_{n-1}}$, $\tau^n(x)=S_{i_0}+\cdots + S_{i_{n-1}}$.  
Clearly cylinder sets can be defined analogously (without the ambiguity in address) for the map $T_\lambda$. 

As usual, the original system $(I,f)$ can be connected to the induced
system $(Y,F)$ via an intermediate tower construction,
say $(\Delta, f_{\Delta})$, defined as follows:
The space is the disjoint union
$$
\Delta = \bigsqcup_i \bigsqcup_{l = 0}^{\tau_i-1} \Delta_{i,l},
$$
where $\Delta_{i,l}$ are copies of $W_i$ and $\hat W_i$,
and the inducing time $\tau_i = \tau|_{W_i \cup \hat W_i} = S_{i-1}$. 
Points in $\Delta_{i, l}$ are of the form $(x, l)$ where $x\in W_i\cup \hat W_i$.
The map $f_{\Delta}:\Delta \to \Delta$ is defined at $(x,l) \in \Delta_{i,l}$ as
$$
f_\Delta(x,l) = \left\{ \begin{array}{ll}
(x,l+1) \in \Delta_{i,l+1} & \text{ if } l < \tau_i - 1;   \\
(F(x),0) = (0,f^{S_{i-1}}(x)) \in \sqcup_i\Delta_{i,0} & \text{ if } l = \tau_i - 1.
\end{array} \right.
$$
The projection $\pi:\Delta \to I$, defined by $\pi(x,l) = f^l(x)$ for
$(x,l) \in \Delta_{i,l}$, semiconjugates this map to the original system:
$\pi \circ f_\Delta = f \circ \pi$.
Furthermore, the induced map $(Y,F)$ is isomorphic to the first return
map to the {\em base} $\Delta_0 = \sqcup_i \Delta_{i,0}$.

\begin{lemma}\label{lem:conformal-induced} Let $\Phi_t$ be the induced potential of $\phi_t$,
and $p$ be a potential shift.

\begin{enumerate}[label=({\alph*}),  itemsep=0.0mm, topsep=0.0mm, leftmargin=7mm]
\item A $(\phi_t,p)$-conformal measure $n_t$ for $(I, f)$ yields a
$(\Phi_t,\tau p)$-conformal  measure $m_t$ for $(Y, F)$ by restricting and normalising:
$$
m_t(A) = \frac{1}{n_t(Y)} n_t( A )\quad  \text{ for every } A \subset Y := \cup_{i \ge 1}(W_i \cup \hat W_i).
$$
\item
A $(\Phi_t,\tau p)$-conformal measure $m_t$ for $(Y,F)$
projects to a $(\phi_t,p)$-conformal
measure $n_t$ for $(I, f)$: for every $i,l$
and $A \subset W_i$ or $A\subset \hat W_i$,
$$
n_t(\pi(A,l)) = \frac1{M} \int_A
\exp\left(lp + \sum_{j=0}^{l-1} \phi_t \circ f^j \right) dm_t,
$$
see Figure~\ref{fig:conformal_measure}, with normalising constant
$$
M := 1 + e^p \sum_{i \ge 2} \int_{W_i} e^{-\phi_t} dm_t
+ e^{2p}  \sum_{i \ge 3}\int_{W_i} e^{ - \phi_t \circ f - \phi_t} dm_t \ge 1
$$
is $(\phi_t,p)$-conformal.
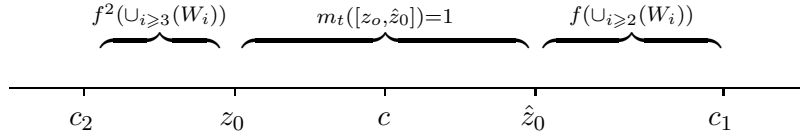
\begin{figure}[ht]
\unitlength=10mm
\begin{picture}(11,2.5)(0,0) \let\ts\textstyle
\put(0,1){\line(1,0){10.5}}
\put(1,0.9){\line(0,1){0.1}} \put(0.8, 0.5){$c_2$}
\put(3,0.9){\line(0,1){0.1}} \put(2.8, 0.5){$z_0$}
\put(5,0.9){\line(0,1){0.1}} \put(4.9, 0.5){$c$}
\put(7,0.9){\line(0,1){0.1}} \put(6.8, 0.5){$\hat z_0$}
\put(9.5,0.9){\line(0,1){0.1}} \put(9.3, 0.5){$c_1$}
\put(1.1,1.5){$\overbrace{\qquad\qquad}^{f^2(\cup_{i \ge 3}(W_i))}$}
\put(3.1,1.5){$\overbrace{\qquad\!\! \qquad\qquad \qquad\qquad}^{m_t([z_o, \hat z_0]) = 1}$}
\put(7.1,1.5){$\overbrace{\qquad\qquad\qquad}^{f(\cup_{i \ge 2}(W_i))}$}
\end{picture}
\caption{Distribution of the conformal mass $n_t$ on $[c_2, c_1]$}
\label{fig:conformal_measure}
\end{figure}

\noindent
In the case that $\phi_t = -t \log|f'|$, then the formula for
the normalising constant simplifies to
$M = 1+ e^p \sum_{i\ge 2} w_i^t \kappa_i^t +
e^{2p} \sum_{i \ge 3} w_i^t \kappa_i^t \kappa_0^t$
which is finite for all $\lambda \in (0,1)$, $t >0$ and $p \in \R$. 
\item
The invariant measure $\mu_t$ for $(Y,F, \Phi_t)$ projects to an invariant
measure $\nu_t$ provided $\sum_i \tau_i \mu_t(W_i \cup \hat W_i) < \infty$
(where in fact $\tau_i = S_{i-1}$),
using the formula
$$
\nu_t = \frac1\Lambda
\sum_i \sum_{j=0}^{\tau_i-1} f^j_* \mu_t \quad \text{ for }
\Lambda = \sum_i \tau_i \mu_t(W_i \cup \hat W_i).
$$
Moreover,
$$h(\nu_t)=\frac{h(\mu_t)}{\Lambda}
\quad  \text{ and } \quad \int g~d\nu_t =\frac{\int G~d\mu_t}{\Lambda},
$$
for any measurable potential $g$ on $I$ and its induced version $G$ on $Y$. 
\end{enumerate}
\end{lemma}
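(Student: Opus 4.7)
The proof splits naturally into three essentially independent parts, each exploiting the tower $(\Delta,f_\Delta)$ as a bridge between $(I,f_\lambda)$ and $(Y,F_\lambda)$.

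\textbf{Part (a).} Since $F|_{\W_i}=f^{\tau_i}|_{\W_i}$ with $\tau_i=S_{i-1}$, and $f^{\tau_i}$ is bijective on each of $W_i$ and $\hat W_i$ by the definition of closest precritical points, the plan is to iterate the $(\phi_t-p)$-conformality of $n_t$ exactly $\tau_i$ times. The standard chain-rule unfolding then gives, for any $A\subset W_i$,
\begin{equation*}
n_t(F(A))=n_t(f^{\tau_i}(A))=\int_A e^{\tau_i p-\sum_{j=0}^{\tau_i-1}\phi_t\circ f^j}\,dn_t=\int_A e^{\tau p-\Phi_t}\,dn_t,
\end{equation*}
which is exactly the $(\Phi_t-\tau p)$-conformality condition for $F$ on $W_i$. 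Dividing both sides by the constant $n_t(Y)$ preserves the identity, so $m_t:=n_t(Y)^{-1}n_t|_Y$ is the required measure.

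\textbf{Part (b).} The plan is first to build a measure $\tilde n_t$ on $\Delta$ by declaring, on each copy $\Delta_{i,l}$ of $\W_i$,
\begin{equation*}
d\tilde n_t|_{\Delta_{i,l}}=\exp\Bigl(lp+\sum_{j=0}^{l-1}\phi_t\circ f^j\Bigr)\,dm_t,
\end{equation*}
and then to set $n_t:=M^{-1}\pi_*\tilde n_t$ with $M:=\tilde n_t(\Delta)$. Verifying that $n_t$ is $(\phi_t-p)$-conformal on $I$ reduces to two case analyses for sets $B\subset I$ on which $f$ is injective. In the \emph{within-column} case $B=\pi(A,l)$ with $A\subset\W_i$ and $l+1<\tau_i$, both $n_t(f(B))$ and $\int_B e^{-(\phi_t-p)}\,dn_t$ unfold, via the definition of $\tilde n_t$ and the Birkhoff identity $\sum_{j=0}^{l}\phi_t\circ f^j=\sum_{j=0}^{l-1}\phi_t\circ f^j+\phi_t\circ f^l$, into the same $m_t$-integral on $A$. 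In the \emph{return} case $l=\tau_i-1$, $f(B)$ lies in the base $Y$ and the conformality equation collapses to precisely the $(\Phi_t-\tau p)$-conformality of $m_t$ under $F$ proved in (a). The formula for $M$ then follows by rearranging the total mass $\tilde n_t(\Delta)$ level by level, and in the piecewise-linear setting with $\phi_t=-t\log|f_\lambda'|$ the fact that $|Df^l|$ is constant on each $\W_i$ for $l<\tau_i$ extracts the factors $\kappa_i^t$ and $\kappa_0^t$ appearing in the stated closed form; finiteness of $M$ for all $\lambda\in(0,1)$, $t>0$, $p\in\R$ is then a direct consequence of the explicit slopes $\kappa_i$ computed in Section~\ref{sec:Fibo}.

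\textbf{Part (c).} This is a standard Kakutani tower projection. Invariance of $\nu_t$ is verified by applying $f_*$ term by term to $\sum_i\sum_{l=0}^{\tau_i-1}f^l_*(\mu_t|_{\W_i})$: within each column, $f_*f^l_*=f^{l+1}_*$ shifts indices up; at the top level $l=\tau_i-1$ one obtains $f^{\tau_i}_*(\mu_t|_{\W_i})=F_*(\mu_t|_{\W_i})$, and summing over $i$ gives $F_*\mu_t=\mu_t$ by $F$-invariance, restoring the missing $l=0$ terms. Normalisation $\nu_t(I)=1$ is immediate from $\Lambda=\sum_i\tau_i\mu_t(\W_i)$. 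Abramov's formula delivers $h(\nu_t)=h(\mu_t)/\Lambda$, and the integral identity follows by interchanging the finite sum with the integral and recognising the inner sum as $\Psi=\sum_{l=0}^{\tau-1}\psi\circ f^l$ on $\W_i$.

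\textbf{Main obstacle.} The technical heart is part (b). The cocycle bookkeeping $\sum_{j=0}^{l-1}\phi_t\circ f^j$ must be aligned exactly with the repeated application of $n_t(f(\cdot))=\int e^{p-\phi_t}\,dn_t$, and the passage from the top of a column back to the base must convert conformality of $n_t$ under $f$ into the given conformality of $m_t$ under $F=f^\tau$. Parts (a) and (c) are, by comparison, straightforward iterations of the defining identities.
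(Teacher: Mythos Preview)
Your treatment of (a) and (c) is fine and matches the paper. The real issue is in (b), and it is precisely the point you did \emph{not} flag as the obstacle.

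You define $n_t:=M^{-1}\pi_*\tilde n_t$ as a genuine push-forward, with $M=\tilde n_t(\Delta)$. But $\pi:\Delta\to I$ is highly non-injective: a set $B\subset I$ typically has many preimages $(A,l)$, $(A',l')$, \dots\ in different columns and at different heights. The push-forward therefore \emph{sums} the contributions from all of them, whereas the formula in the statement assigns to $B=\pi(A,l)$ the integral coming from a \emph{single} preimage $(A,l)$. These are different measures. Concretely, your $M=\tilde n_t(\Delta)=\sum_i\sum_{l=0}^{\tau_i-1}\int_{\W_i}e^{lp-\sum_{j<l}\phi_t\circ f^j}\,dm_t$ has $\tau_i=S_{i-1}$ terms per column, not the three groups (levels $0,1,2$) that appear in the lemma; in particular your $M$ can be infinite (e.g.\ for $t=1$, $p=0$ and $\lambda\ge\frac{2}{3+\sqrt5}$, where it reduces to $\sum_i S_{i-1}m(\W_i)=\infty$). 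Your conformality check, on the other hand, only tracks one fibre $(A,l)$ at a time, so it does not verify conformality of the push-forward either.

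The paper takes the other route: it \emph{defines} $n_t(\pi(A,l))$ via a single preimage and then has to prove this is well-defined, i.e.\ that two preimages $(A,l)$ and $(A',l')$ of the same $B$ yield the same value. This is what the paper calls ``the tricky part'': one shows that the extra $f$-iterates needed to go from one preimage to the other decompose into an integer number $k$ of applications of $F$ (using that $F|_{\W_i}$ is the first return to $(z_{Q(i)},\hat z_{Q(i)})$), and then the $(\Phi_t-\tau p)$-conformality of $m_t$ over that $k$-cylinder forces the two integrals to agree. Once well-definedness is in hand, your two-case conformality check is exactly right, and the normalising constant is obtained by integrating over a \emph{fundamental domain} for $\pi$ --- here $\bigcup_{i\ge1}\W_i\cup\bigcup_{i\ge2}f(W_i)\cup\bigcup_{i\ge3}f^2(W_i)$, which covers $[c_2,c_1]$ disjointly --- giving the three-term expression for $M$. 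This well-definedness argument, not the cocycle bookkeeping, is the genuine technical heart of (b).
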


\begin{remark} 
Note that the last part of this lemma is just an application of the \emph{Abramov formula},
see for example \cite[Theorem 2.3]{PS} and \cite[Theorem 5.1]{Zwei_ind}.
\label{rmk:abra}
\end{remark}

\begin{proof}
(a) If $n_t$ is $(\phi_t,p)$-conformal for $(I,f)$, it means,
as stated in Definition~\ref{def:conformal}, that
$n_t(f(A)) = \int_A e^{-\phi_t + p} dn_t$ whenever $f:A \to f(A)$
is one-to-one.
Taking $A \subset W_i$ (or $\subset \hat W_i$), and applying the above
$\tau_i = S_{i-1}$ times gives that
$n_t(F(A)) = \int_A e^{-\Phi_t + \tau_i p} dn_t$, so
the normalised restriction $m_t =  \frac{1}{n_t(Y)} n_t$ is indeed
$(\Phi_t,\tau p)$-conformal.

(b) For the second statement, it is straightforward from the definition
that if $A \subset W_i$ or $A\subset \hat W_i$ and $0 \le l < \tau_i-1$,
then for $B = \pi(A,l)$,
\begin{eqnarray*}
n_t(f(B)) &=& n_t(\pi(A,l+1)) \\
&=&  \frac1M \int_A \exp\left((l+1)p - \sum_{j=0}^l \phi_t \circ f^j \right) dm_t \\
&=& \frac1M \int_A e^{\phi_t \circ f^l + p}
\exp\left(lp - \sum_{j=0}^{l-1} \phi_t \circ f^j \right) dm_t \\
&=& \int_B e^{-\phi_t + p} dn_t.
\end{eqnarray*}
Similarly, if $l = \tau_i-1$, then
\begin{eqnarray*}
n_t(f(B)) &=& n(F(A)) = \frac1{\Lambda} \int_A
\exp\left(\tau_i p - \Phi_t \right) dm_t \\
&=& \frac1M \int_A \exp\left(
\sum_{j=0}^{\tau_i-1} (-\phi_t \circ f^j + p) \right)   dm_t \\
&=& \frac1M \int_A e^{\phi_t \circ f^l + p}
\exp\left(lp - \sum_{j=0}^{l-1} \phi_t \circ f^j \right) dm_t \\
&=& \int_B e^{-\phi_t + p} dn_t
\end{eqnarray*}
This proves the $(\phi_t,p)$-conformality.
The tricky part is to show that $n_t$ is actually
well-defined.
Assume that $B = \pi(A,l) = \pi(A', l')$ for two different sets $A \subset W_i$
and $A' \subset W_{i'}$. So we must show that the procedure above gives $n_t(\pi(A, l))=n_t(\pi(A', l'))$.
Assume also that $\tau_i - l \le \tau_{i'}-l'$; then we might as well take
$B$ maximal with this property:
$B = \pi(W_{i'}, l')$.

Now $B' := f^{\tau_i-l}(B) \subset F(W_i) = (z_{Q(i)}, c)$ or
$(c, \hat z_{Q(i)})$.  It is important to note that
the induced map $F$ is not a first return map to a certain region, but
$F|_{W_i \cup \hat W_i} = f^{S_{i-1}}|_{W_i \cup \hat W_i}$ is the first return
map to $(z_{Q(i)}, \hat z_{Q(i)})$.
Since $f^{\tau_{i'} - \tau_i}$ maps $B'$ to $F(W_{i'}) = (z_{Q(i')}, c)$ or
$(c, \hat z_{Q(i')})$, the iterate  $f^{\tau_{i'} - \tau_i}|_B'$
can be decomposed into an integer number, say $k \ge 0$, of
applications of $F$, and $B'$ is in fact a $k$-cylinder for the induced map.  Since $m_t$ is $(\Phi_t,\tau p)$-conformal,
$$
m_t(B') = \int_{F^k(B')}
\exp\left(\sum_{j=0}^{k-1} (\Phi_t - \tau p) \circ F^{-j} \right) dm_t.
$$
Taking an extra $\tau_i-l$ steps backward, we get 
\begin{eqnarray*}
n_t(B) &=& 
\frac1M \int_{B'}\exp\left(\sum_{j=1}^{\tau_i-l} (\phi_t - p) \circ f^{j-(\tau_i-l)} \right) dm_t \\
 &=& \frac1M \int_{F^k(B')}
\exp\left(\sum_{j=0}^{k-1} (\Phi_t - \tau p) \circ F^{-j} \right)
\exp\left(\sum_{j=1}^{\tau_i-l} (\phi_t - p) \circ f^{j-(\tau_i-l) \circ F^{-k}} \right) dm_t \\
 &=&  \frac1M \int_{F(W_{i'})}
\exp\left(-\sum_{j=1}^{\tau_{i'}-l'} (\phi_t - p) \circ f^{j-(\tau_{i'}-l')} \right) dm_t,
\end{eqnarray*}
so computing $n_t(B)$ using $\tau_i-l$ or $\tau_{i'} - l'$ both give
the same answer.

Now for the normalising constant, since our method of projecting conformal measure only takes the measure of one of the preimages of $\pi$ in $\Delta$ of any set $A\subset I$, we do not sum over all levels of the tower, but just enough so that the image by $\pi$ covers $I$, up to a zero measure set.
However, modulo a countable set, the core $[c_2,c_1]$
is disjointly covered by
$\bigcup_{i \ge 1} (W_i \cup \hat W_i) \cup \bigcup_{i \ge 2} f(W_i)
\cup \bigcup_{i \ge 2} f^2(W_i)$.
This gives
\begin{eqnarray*}
M &=& \sum_{i \ge 1} m_t(W_i \cup W_i) + \sum_{i \ge 2}
\int_{W_i} e^{p-\phi_t} dm_t +   \sum_{i \ge 3}
\int_{W_i} e^{2p-\phi_t-\phi_t \circ f} dm_t \\
 &=& 1 + \sum_{i \ge 2}
e^p \int_{W_i} e^{-\phi_t} dm_t +  e^{2p} \sum_{i \ge 3}
\int_{W_i} e^{-\phi_t-\phi_t \circ f} dm_t.
\end{eqnarray*}
for an arbitrary potential.
Using the formulas for the slope $\kappa_i = f'|W_i$ from \eqref{eq:kappa}
and the expressing for $\frac12 w_i = m_t(W_i)$ from \eqref{eq:m_t_W},
we obtain for $\phi_t = -t \log|f'|$:
\begin{eqnarray*}
M &=& 1 + \frac{e^p}{2} \sum_{i \ge 2} w_i^t \kappa_i^t +
\frac{e^{2p}}{2} \sum_{i \ge 3}
w_i \kappa_i^t \kappa_0^t \\
 &=& 1 + \frac{e^p (1-\lambda^t)}{2}\left(1  + (1-\lambda)^t \lambda^t
+ (1-\lambda)^{3t} \lambda^{2t}
+ \sum_{i \ge 5} \frac{\lambda^{3ti}(1-\lambda)^{2ti}}{\lambda^{11t}(1-\lambda)^{5t}} \right) \\
&& + \ \frac{e^{2p} (1-\lambda^t)}{2(1-\lambda)^t}
\left( (1-\lambda)^t \lambda^t
+ (1-\lambda)^{3t} \lambda^{2t}
+ \sum_{i \ge 5} \frac{\lambda^{3ti}(1-\lambda)^{2ti}}{\lambda^{11t}(1-\lambda)^{5t}} \right)  \\
 &=& 1 + \frac{e^p (1-\lambda^t)}{2}\left(1  + (1-\lambda)^t \lambda^t
+ (1-\lambda)^{3t} \lambda^{2t} + \frac{\lambda^{4t}(1-\lambda)^{5t}}{1-\lambda^{3t}(1-\lambda)^{3t}} \right) \\
&& + \ \frac{e^{2p} (1-\lambda^t) \lambda^t}{2}
\left( 1
+ (1-\lambda)^{2t} \lambda^{t}
+ \frac{\lambda^{3t}(1-\lambda)^{4t}}{1-\lambda^{3t}(1-\lambda)^{3t}} \right) < \infty.
\end{eqnarray*}
(c) The third statement is an Abramov formula, see Remark~\ref{rmk:abra}.
\end{proof}

\section{The conformal measure and equilibrium state for $(Y, F_\lambda, \Phi_t)$}
\label{sec:conformal}

In this section we adapt the results for the map $T_\lambda$ studied in \cite{BT3} to the map $F_\lambda$. This also allows us to prove Theorem~\ref{thm:main hyp dim}.  

\begin{proposition}\label{prop:conformal}
For each $\lambda \in (0,1)$, $t > 0$ and $p = P(\Phi_t)$, the map 
$F_\lambda$ has a  $(\Phi_t-p)$-conformal measure $\tilde m_t$ with
\begin{equation}\label{eq:m_t_W}
 \tilde m_t(W_j) = \tilde m_t(\hat W_j) =
\left\{ \begin{array}{ll}
\frac{1-\lambda^t}{2}\lambda^{t(k-1)} & \text{ if } \lambda^t \le \frac12, \\[1mm]
\left[ (k-1) + \lambda^{-t}(1-\frac{k}{2}) \right] (\frac12)^{k+1}
& \text{ if } \lambda^t \ge \frac12.
\end{array} \right.
\end{equation}
If in addition $\lambda^t < \frac12$, then $F_\lambda$ preserves a 
probability measure $\tilde \mu_t \ll \tilde m_t$ with
\begin{equation}\label{eq:mu_t_W}
 \tilde \mu_t(W_j) = \zeta_t \frac{1-2\lambda^t}{\lambda^t} \left( \frac{\lambda^t}{1-\lambda^t} \right)^j
\quad \text{ and  } \quad
 \tilde \mu_t(\hat W_j) = (1-\zeta_t) \frac{1-2\lambda^t}{\lambda^t} \left( \frac{\lambda^t}{1-\lambda^t} \right)^j
\end{equation}
for some $\zeta_t \in (0,1)$.
Moreover, $ \tilde \mu_t$ is an equilibrium state for potential $\Phi_t$.
\end{proposition}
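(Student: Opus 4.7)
The strategy is to leverage the commutative diagram \eqref{eq:comm_diag}: the conformal and invariant measures for $(Y, F_\lambda, \Phi_t)$ should lift those of $((0,1], T_\lambda, -t\log|T_\lambda'|)$, which are constructed explicitly in \cite{BT3}. Since $\pi$ is a two-to-one piecewise affine factor map with constant absolute derivative on each of $[z_0,c]$ and $[c,\hat z_0]$, the chain rule applied to $T_\lambda \circ \pi = \pi \circ F_\lambda$ gives $|F_\lambda'| = |T_\lambda'|\circ \pi$, hence $\Phi_t = (-t\log|T_\lambda'|)\circ \pi$ and $P(\Phi_t) = P_{T_\lambda}(-t\log|T_\lambda'|) =: p$. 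For the conformal measure, I would let $m_t^{SV}$ be the conformal measure for $T_\lambda$ from \cite{BT3} and define the symmetric lift $\tilde m_t(W_j) = \tilde m_t(\hat W_j) = \tfrac12 m_t^{SV}(V_j)$; this yields \eqref{eq:m_t_W} immediately in both regimes. As an independent check, conformality can be verified branchwise: because $F_\lambda|_{W_j}$ is linear with slope $s_j$ and image $\bigcup_{i>Q(j)}W_i$ or $\bigcup_{i>Q(j)}\hat W_i$, the identity $\tilde m_t(F_\lambda(W_j)) = e^p s_j^t \tilde m_t(W_j)$ reduces, in the regime $\lambda^t \le \tfrac12$, to a geometric sum that fixes $p$ and reproduces the claimed formula.

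For the invariant measure in the regime $\lambda^t < \tfrac12$, I would write $\tilde\mu_t(W_j) = a_j \tilde m_t(W_j)$ and $\tilde\mu_t(\hat W_j) = b_j \tilde m_t(\hat W_j)$ with $a_j, b_j$ constant on each branch (justified by piecewise linearity). Invariance $F_{\lambda*}\tilde\mu_t = \tilde\mu_t$ then becomes an eigenvector equation for the weighted transition matrix, a direct generalisation of the computation leading to \eqref{eq:v} with $\lambda$ replaced by $\lambda^t$, yielding $\tilde\mu_t(\W_j) \propto (\lambda^t/(1-\lambda^t))^j$. The parameter $\zeta_t$ enters because $F_\lambda$ is not symmetric under $x\mapsto 1-x$: whether $F_\lambda(W_j)$ covers the $W$-side or $\hat W$-side of $[z_0,\hat z_0]$ is determined by the asymmetric critical orbit combinatorics, so the left/right balance is broken and a one-parameter adjustment is forced, independent of the overall geometric decay.

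For the equilibrium state claim I would apply Rohlin's formula. Since $F_\lambda$ has uniformly bounded slopes in $[1/(1-\lambda), 1/(\lambda(1-\lambda))]$, the Jacobian $\log J_{\tilde m_t}F_\lambda = -\Phi_t + p$ is bounded and integrable. Writing $\rho = d\tilde\mu_t/d\tilde m_t$ and using $J_{\tilde\mu_t}F_\lambda = (\rho\circ F_\lambda/\rho)\,J_{\tilde m_t}F_\lambda$ together with $\int\log(\rho\circ F_\lambda/\rho)\,d\tilde\mu_t = 0$ (by invariance), Rohlin's formula gives
\[
h_{\tilde\mu_t}(F_\lambda) = \int\log J_{\tilde\mu_t}F_\lambda\,d\tilde\mu_t = -\int\Phi_t\,d\tilde\mu_t + p,
\]
so $\tilde\mu_t$ attains $p = P(\Phi_t)$ in the variational formula \eqref{eq:pressure} applied to $(Y, F_\lambda, \Phi_t)$. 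The main obstacle I anticipate is the verification of \eqref{eq:m_t_W} in the regime $\lambda^t \ge \tfrac12$, where the geometric ansatz fails and the polynomial-times-exponential formula emerges from the phase transition analysis of $T_\lambda$ in \cite{BT3}; handling this cleanly will likely require importing the generating-function machinery used there rather than a direct manipulation, and similarly the identification $\zeta_t \in (0,1)$ (as opposed to $\zeta_t = \tfrac12$) requires a careful tracking of on which side of $c$ each $F_\lambda$-orbit lands.
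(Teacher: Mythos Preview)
Your approach is essentially the same as the paper's: lift the conformal and invariant measures for $T_\lambda$ from \cite{BT3} through the two-to-one factor map $\pi$, splitting the conformal mass evenly between $W_j$ and $\hat W_j$, and splitting the invariant mass via an asymmetry parameter $\zeta_t$ determined by which side of $c$ each $F_\lambda|_{\W_j}$ lands on. The only notable local difference is in the equilibrium-state step: you verify $h_{\tilde\mu_t}+\int\Phi_t\,d\tilde\mu_t = p$ directly via Rohlin's formula, whereas the paper transfers the equilibrium property from $T_\lambda$ by observing that $\pi$ preserves both entropy and the integral of $\log|F_\lambda'|$; both arguments are valid, though the paper's route also yields uniqueness for free.
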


\begin{proof}
Recall from \eqref{eq:StrVogt} and \eqref{eq:comm_diag}
that $T_\lambda \circ \pi = \pi \circ F_\lambda$ for the
two-to-one factor map $\pi$ with $\pi^{-1}(V_j) = W_j \cup \hat W_j$.
In \cite[Theorem 2]{BT3} it is shown that $T_\lambda$ has a 
$(\Phi_t-p)$-conformal measure such that for $\psi(t) := \frac{(1-\lambda)^t}{1-\lambda^t}$,
$$
m_{t,p}(V_k)= \left\{ \begin{array}{ll}
(1-\lambda^t)\lambda^{t(k-1)} & \text{ if } p = \log \psi(t) \text{ and } \lambda^t \le \frac12, \\[1mm]
\left[ (k-1) + \lambda^{-t}(1-\frac{k}{2}) \right] (\frac12)^k
& \text{ if } p = \log 4[\lambda(1-\lambda)]^t \text{ and } \lambda^t \ge \frac12.
\end{array} \right.
$$
and an invariant measure (provided $\lambda^t < \frac12$) with
$\mu_{t,p}(V_k) = \frac{1-2\lambda^t}{\lambda^t} 
\left( \frac{\lambda^t}{1-\lambda^t} \right)^k$.
To obtain $ \tilde m_t$ and $ \tilde \mu_t$ we lift these measures by $\pi$,
distributing the mass to $W_j$ and $\hat W_j$ appropriately.
Since $F_\lambda(W_j) = F_\lambda(\hat W_j) = \cup_{k \ge j-1} W_k$
or $\cup_{k \ge j-1} \hat W_k$, we can distribute the conformal mass
evenly. This gives \eqref{eq:m_t_W}.

To obtain \eqref{eq:mu_t_W}, first define
\begin{equation}\label{eq:At}
A^t = (1-\lambda)^t
\left( \begin{array}{ccccccc}
1^t    & \lambda^t & \lambda^{2t} & \lambda^{3t} & \hdots & \hdots   & \hdots \\
1^t    & \lambda^t & \lambda^{2t} & \lambda^{3t} &        &          &   \\
0      & 1^t       & \lambda^t   & \lambda^{2t} & \lambda^{3t}   &   &   \\
0      & 0         & 1^t         & \lambda^t   & \lambda^{2t} &     &  \\
\vdots &           & 0           & 1^t         & \lambda^t   & \lambda^{2t} & \dots \\
       &           &             &             & \ddots      & \ddots    & \ddots
\end{array} \right).
\end{equation}
That is, the matrix $A$ in \eqref{eq:A} with all entries raised to the power $t$,
then  $\psi^{-1}(t) A^t$ is a probability matrix and
$\frac{m_{t,p}(V_i\cap T_\lambda^{-1}(V_j)) }{ m_{t,p}(V_i) } =
\psi^{-1}(t) A^t_{i,j}$.

Now set  $v^t_j = \frac{1-2\lambda^t}{\lambda^t} \left( \frac{\lambda^t}{1-\lambda^t} \right)^j$ (so for $t = 1$, this reduces to the value of $v_j$ in
\eqref{eq:v}) and define
$$
\zeta_t = \sum_{\stackrel{j \ge 1}{c_{S_{j-1}} < c}}  v^t_j,
$$
\ie the proportion of the invariant mass that maps under $F_\lambda$
to the left of $c$.
Next define $\mu_t$ on cylinders  $C_{i_0\cdots i_{n-1}}$ by 
$$
\mu_t(C_{i_0\cdots i_{n-1}}) = \zeta_t v^t_{i_0} \prod_{k=1}^{n-1}
\psi(t)^{-1} A^t_{i_{k-1}i_k}
$$
and similarly
$\mu_t(\hat C_{i_0\cdots i_{n-1}}) =
 \hat \zeta_t v^t_{i_0} \prod_{k=1}^{n-1}
\psi(t)^{-1} A^t_{i_{k-1}i_k}$.

Since $\psi(t)^{-1} A^t$ is a probability matrix with $A^t_{ki_0} = 0$ if $k > i_0+1$, we get
for every cylinder set
\begin{eqnarray*}
 \tilde \mu_t(F_\lambda^{-1}(C_{i_0\cdots i_{n-1}})) &=& \sum_{\stackrel {k \le i_0+1}{c_{S_{k-1}} < c} }
 \tilde \mu_t(C \cup \hat C)_{ki_0\cdots i_{n-1}} \\
&=&\zeta_t \sum_{k \le i_0+1} v^t_k \psi(t)^{-1} A^t_{k i_0}
 \prod_{j=1}^{n-1} \psi(t)^{-1} A^t_{i_{j-1}e_j} \\
&=& \zeta_t v^t_{i_0} \prod_{k=1}^{n-1} \psi(t)^{-1} A^t_{i_{k-1}i_k}
=  \tilde \mu_t(C_{i_0\cdots i_{n-1}})
\end{eqnarray*}
and similarly for $F_\lambda^{-1}(\hat C_{e_0\dots e_{n-1}})$. 
This proves $F_\lambda$-invariance of $ \tilde \mu_t$.

The $T_\lambda$-invariant measure above is the unique equilibrium state
for $-t \log|T'_\lambda|$ provided $\lambda^t < \frac12$.
Since the factor map $\pi$ does not affect entropy, and
because for any $F_\lambda$-invariant measure $\tilde \nu$ we have
$\int \log |F'_\lambda| d\tilde \nu = \int \log |T'_\lambda| d(\tilde \nu \circ \pi^{-1})$,
it follows that $\tilde \mu_t$ is indeed the unique equilibrium state for
$(Y, F_\lambda, -t \log|F'_\lambda|)$.
\end{proof}

\begin{proof}[Proof of Theorem~\ref{thm:main hyp dim}]
Let $x \in [z_0, \hat z_0] \setminus \cup_{n \ge 0} f^{-n}(c)$ be arbitrary. 
Since $z_k$ is a closest precritical point,
$f^j(W_k \cup \hat W_k) \cap [z_k, \hat z_k] = \emptyset$ if $0 < j < S_k$.
Therefore, if $c \in \omega(x)$ then $F^i(x) \to 0$ along a subsequence.
From this we see that hyperbolic sets for $F$ coincide with intersections
of hyperbolic sets for $f$ with $[z_0, \hat z_0]$, 
implying that hyperbolic dimension are the same for $F$ and $f$.

Now for the escaping set, first observe that
the intervals $f^j([z_k, c]) = f^j([c, \hat z_k])$ for $0 < j \le S_k$
have $f^j(c)$ as boundary point and lengths tending to $0$ as
$k \to \infty$.
Therefore $F^i(x) \to c$ implies that $f^n(x) \to \omega(c)$ which implies that $\omega(x)= \omega(c)$.  We next show that $F^i(x) \to c$ if and only if  $\omega(x)= \omega(c)$.

Denote by $U_n$ the largest neighbourhood of $x$ on which $f^n$ is 
monotone, and let $R_N$ be the largest distance
between $f^n(x)$ and $\partial f^n(U_n)$.
If there is $k$ such that $F^i(x) \notin [z_k, \hat z_k]$
infinitely often, then  by the Markov property of $F$, 
$f^n(U_n) \supset [z_k, c]$
or $[c, \hat z_k]$ along a subsequence.
This means $R_n \not \to 0$.
By \cite{Btams}, this implies that $\omega(x) \not\subset \omega(c)$.

Therefore $\omega(x) = \omega(c)$ if and only if $F^i(x) \to c$,
and hence the escaping set $\Omega_\lambda$ coincides with 
$\Bas_\lambda \cap [z_0, \hat z_0]$.
Theorem~\ref{thm:main hyp dim} therefore follows from \cite[Theorem C]{BT3}. 
\end{proof}

\section{Conformal pressure for $(I,f,\phi_t)$} \label{sec:conformal_for_f}

In this section we prove the main part of Theorem~\ref{mainthm:thermo_Fibo}, with the components about 
existence of conformal measure and upper and lower bounds on conformal pressure in various lemmas.
We start by giving the definition of conformal pressure, presented for general dynamical systems.

\begin{definition}
For a dynamical system $g:X\to X$ and a potential $\phi:X\to [-\infty, \infty]$, the {\em conformal pressure} for $(X,g,\phi)$ is
\begin{equation*}\label{eq:Pconf}
\Pconf(\phi):=\inf\left\{p\in \R:\text{there exists a } (\phi- p)
\text{-conformal measure}\right\}.
\end{equation*}
\end{definition}

The results on the pressure in this section are obtained using 
$\Pconf(\phi_t)$; in Section~\ref{sec:CMS} we show that the conformal pressure $\Pconf(\phi_t)$  
coincides with the (variational) pressure $P(\phi_t)$ from \eqref{eq:pressure}. Thus our statements in Theorem~\ref{mainthm:thermo_Fibo} should be read as applying to `both' quantities.
For $\Pconf (\Phi_t)$, we start by quoting the conclusion of 
Theorems~2 and B of \cite{BT3}: $\Pconf(\Phi_t)$ and $P(\Phi_t)$
coincide, and
\begin{equation*}\label{eq:PconfPhit}
\Pconf(\Phi_t) =  \left\{
\begin{array}{ll}
\log \psi(t)  & \text{ if } \lambda^t \le \frac12;\\[2mm]
\log[4\lambda^t(1-\lambda)^t] \qquad & \text{ if } \lambda^t \ge \frac12.
\end{array} \right.
\end{equation*}
Recall from  \eqref{eq:t1} that $t_2 = -\log 4/\log[\lambda(1-\lambda)]$
is the value of $t$ such that $[\lambda(1-\lambda)]^t = \frac14$.
Hence $t_2 = t_1$ if $\lambda \ge \frac12$ and $t_2 < t_1 = 1$ otherwise.
We can interpret $t_1$ as the smallest $t$ such 
that the pressure of the induced system $\Pconf(\Phi_t) = 0$.

Any $(\Phi_t-p\tau)$-conformal measure $m_t$ must
observe the relations (for $\w_k = m_t(W_k) = m_t(\hat W_k)$)
\begin{eqnarray}
\w_1 &=& (1-\lambda)^t e^{-pS_0}\nonumber \\
\w_2 &=& \lambda^t (1-\lambda)^t e^{-pS_1}\nonumber \\
\w_3 &=& \lambda^t (1-\lambda)^t e^{-pS_2} (1-\w_1)
\label{eq:tildew} \\
\vdots\ && \qquad\vdots \qquad\qquad \vdots \nonumber\\
\w_j &=& \lambda^t (1-\lambda)^t e^{-pS_{j-1}} \left(1-\sum_{k \le
j-2} \w_k\right). \nonumber
\end{eqnarray}

Recurrence relations of a similar form were used in \cite{BT3} to prove \cite[Theorem 2]{BT3}, 
but our situation here is more complicated since in that
setting in the place of each $e^{-pS_j}$ term was simply the constant
term $\psi(t)$. 
The idea now is to find a solution $p = p(t)$ of
\eqref{eq:tildew} such that also
$H(p,t) := \sum_j \w_j$ is equal to $1$ (this is equivalent to finding a solution set $\{ \w_j\}_j$). 
Note that in Lemma~\ref{lem:lowerboundp0} and Proposition~\ref{prop:upperboundp0} below, 
we give necessary lower and upper bounds on $p(t)$, without assuming the existence of a solution.
Along the way, we will also need to check that
$\w_k > 0$ for all $k \ge 1$.

Write
\begin{equation*}
\beta := t \log [\lambda(1-\lambda)] \quad \text{ and } \quad
\beta' := (t-t_2) \log [\lambda(1-\lambda)],
\end{equation*}
so that $e^\beta = \frac14$ for $t = t_2$ and
$e^\beta = \frac14 e^{\beta'} > \frac14$ for $t < t_2$.

\subsection{Lower bounds on $\Pconf(\phi_t)$}
Let us now compute the asymptotics of $\w_k$ 
to show that in this case $p(t)$ has to be positive for $t < t_1$.

\begin{lemma}\label{lem:tildewp=0}
Fixing $p=0$, there is a unique solution to \eqref{eq:tildew}, denoted by 
$(\bar w^t_k)_{k \in \N}$.
It satisfies 
$$
\begin{cases}
\bar w_k^t > 0 \text{ and } \sum_k \bar w_k^t = 1 & \text{ if } t \ge t_1; \\[1mm]
\text{there exists } k_0 \text{ such that } \bar w_{k_0}^t < 0
 & \text{ if } t < t_1.
\end{cases}
$$
Moreover, for $r_\pm = \frac12(1\pm\sqrt{1-4e^\beta})$, 
$$
\begin{cases}
k_0  =  \Bigg\lceil \frac{\log \frac{r_+(r_+-\lambda^t)}{r_-(r_--\lambda^t)}}
{\log\frac{r_+}{r_-}} \Bigg\rceil + 1
 & \text{ if }\ t_2 <t < t_1 = 1 \text{ (\ie} \lambda \in (0, \frac12)); \\[4mm]
k_0 \approx \frac{2(1-\lambda^t)}{1-2\lambda^t} &
 \text{ if } t \lesssim t_2  \le t_1 =1  \text{ for } \lambda \in (0,\frac12],\\[4mm]
k_0 \approx  \frac{2\pi}{\sqrt{\beta'}}
 & \text{ if } t \lesssim t_1 \le 1 \text{ for } \lambda \in [\frac12,1).
\end{cases}
$$
\end{lemma}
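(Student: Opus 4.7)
The plan is to turn the recurrence \eqref{eq:tildew} at $p = 0$ into a linear second-order recurrence on the tails $R_k := 1 - \sum_{j \le k}\bar w_j^t$. Since the defining relation reads $\bar w_k^t = e^\beta R_{k-2}$ for $k \ge 3$, subtraction gives
\[
R_k = R_{k-1} - e^\beta R_{k-2} \qquad (k \ge 2),
\]
with $R_0 = 1$ and $R_1 = 1 - (1-\lambda)^t$. This shows existence and uniqueness of $(\bar w_k^t)_{k\ge 1}$, and reduces the whole lemma to a sign analysis of $R_k$, since $\mathrm{sgn}(\bar w_k^t) = \mathrm{sgn}(R_{k-2})$ for $k \ge 3$.

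I would then solve this recurrence via its characteristic polynomial $x^2 - x + e^\beta$, whose roots $r_\pm = \tfrac12(1 \pm \sqrt{1-4e^\beta})$ split into three regimes: real and distinct for $t > t_2$, a double root $\tfrac12$ at $t = t_2$, complex conjugates for $t < t_2$. In the generic case $R_k = Ar_+^k + Br_-^k$ with
\[
A = \frac{r_+ - (1-\lambda)^t}{r_+ - r_-}, \qquad B = \frac{(1-\lambda)^t - r_-}{r_+ - r_-},
\]
and the double-root solution reads $R_k = (1 + Dk)2^{-k}$ with $D = 1 - 2(1-\lambda)^t$. The positivity-and-summability part then follows by a boundary check at $t = t_1$: for $\lambda \le \tfrac12$ one has $r_+ = 1-\lambda = (1-\lambda)^{t_1}$, so $A = 0$, $B = 1$ and $R_k = \lambda^k$; for $\lambda \ge \tfrac12$ one uses $D > 0$ at $t_1 = t_2$. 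Monotonicity of $e^\beta$ and $(1-\lambda)^t$ in $t$ then propagates $A,B \ge 0$ (or its double-root analog) to all $t > t_1$, giving $R_k \ge 0$, geometric decay to zero, and $\sum_k \bar w_k^t = 1$.

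For $t < t_1$ the proof splits into a real-root case and a complex-root case:
\begin{itemize}
\item If $\lambda < \tfrac12$ and $t_2 < t < 1$, then $A < 0 < B$, so $R_k = Ar_+^k + Br_-^k$ changes sign once; the first zero is at $k^* = \log(-B/A)/\log(r_+/r_-)$. Multiplying top and bottom of $-B/A = ((1-\lambda)^t - r_-)/((1-\lambda)^t - r_+)$ by $\lambda^t$ and using $r_+r_- = \lambda^t(1-\lambda)^t$ recasts this in the stated form for $k_0$. The asymptotic $k_0 \approx 2(1-\lambda^t)/(1 - 2\lambda^t)$ as $t \downarrow t_2$ then comes from expanding $r_\pm$ to first order around the double root $\tfrac12$ and matching onto the double-root formula $R_k = (1+Dk)2^{-k}$.
\item If $\lambda \ge \tfrac12$ and $t < t_2 = t_1$, write $r_\pm = \sqrt{e^\beta}\, e^{\pm i\theta}$ with $\cos\theta = e^{-\beta'/2}$, so $\theta \sim \sqrt{\beta'}$ as $\beta' \downarrow 0$. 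Then $R_k = 2|A|\sqrt{e^\beta}^{\,k}\cos(k\theta + \arg A)$, and $k_0$ is extracted from the first $k$ where $k\theta + \arg A$ crosses the first zero of the cosine. The relation $\tan(\arg A) = ((1-\lambda)^t - \tfrac12)/(\sqrt{e^\beta}\sin\theta)$ shows $\arg A \to -\tfrac\pi 2$ at rate $\sqrt{\beta'}$, and combining this with $\theta \sim \sqrt{\beta'}$ yields the stated asymptotic in $1/\sqrt{\beta'}$.
\end{itemize}

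The main obstacle is this last asymptotic: both the oscillation rate $\theta$ and the phase $\arg A$ degenerate as $\beta' \downarrow 0$, so the correct constant in front of $1/\sqrt{\beta'}$ emerges only after a careful two-term expansion of $\arg A$ and matching with the expansion of $\theta$; a naive estimate based only on the oscillation period picks up the wrong factor. I would also spend some care on the transition through $t = t_2$ in the first bullet, so that the real-root formula, the double-root formula, and (for $\lambda \le \tfrac12$) the complex-root formula patch together continuously and the single assertion of the lemma is genuinely uniform across the three regimes.
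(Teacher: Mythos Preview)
Your approach is essentially the same as the paper's: both reduce \eqref{eq:tildew} at $p=0$ to the linear constant-coefficient recurrence with characteristic roots $r_\pm$, and then perform the same real/double/complex case analysis. The only cosmetic difference is that the paper applies the recurrence directly to $\bar w_k^t$ (deriving $\bar w^t_{k+1} = \bar w^t_k - e^\beta \bar w^t_{k-1}$), whereas you apply it to the tails $R_k$; since $\bar w_k^t = e^\beta R_{k-2}$ this is an index shift, and your identity $r_+r_- = \lambda^t(1-\lambda)^t$ recovers exactly the paper's closed form for $k_0$ in the real-root regime.
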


\begin{proof}
Subtracting two successive equations in \eqref{eq:tildew}, we find that
the $\bar w_k^t$ satisfy recursive relation
\begin{equation*}
\bar w^t_{k+1} = \bar w^t_k - e^\beta \bar w^k_{k-1}.
\end{equation*}
The roots of the corresponding generating equation 
$r^2-r+e^\beta=0$ are $r_\pm = \frac{1\pm\sqrt{1-4e^{\beta}}}2$. 
It is straightforward to check that
$$
\begin{cases}
(i)\ r_- < \lambda^t, 1-\lambda^t < r_+ & \text{ if } t > 1; \\[1mm]
(ii)\ r_\pm \in \{\lambda, 1-\lambda\} & \text{ if } t = 1; \\[1mm]
(iii)\ \lambda^t < r_- < r_+ <  1-\lambda^t & \text{ if } t_2 < t < 1
\text{ and } \lambda^t < \frac12; \\[1mm]
(iv)\ 1-\lambda^t < r_- < r_+ <  \lambda^t & \text{ if } t_1 < t < 1
\text{ and } \lambda^t > \frac12;  \\[1mm]
(v)\ r_- = r_+ = \frac12   & \text{ if } 
\begin{cases}
t = t_1 < 1 & \text{ for } \lambda^t > \frac12,\\
t = t_2  & \text{ for } \lambda^t \le \frac12,
\end{cases} 
\\[5mm]
(vi)\ r_\pm \text{ are complex conjugate} &
 \text{ if }
 \begin{cases}
t < t_1 < 1 & \text{ for } \lambda^t > \frac12,\\
t < t_2  \le t_1 =1 & \text{ for } \lambda^t \le \frac12.
\end{cases} 
\end{cases}
$$
(i)-(iv)\ In the first four cases, \ie $r_\pm$ are real and distinct,
the recursion combined with the initial values
$\bar w^t_1 = (1-\lambda)^t$ and $\bar w^t_2 = \lambda^t (1-\lambda)^t$,
give the solution
\begin{equation}\label{eq:mt}
\bar w_k^t = \frac{(1-\lambda)^t}{\sqrt{1-4e^\beta}}
\left[ (\lambda^t-r_-) r_+^{k-1} +
(r_+-\lambda^t) r_-^{k-1} \right].
\end{equation}
If $t \ge 1$, then the coefficients are non-negative, and also
if $t_1 < t < 1$.
If $t_2 < t < t_1 = 1$,
then the coefficient $\lambda^t-r_- < 0$, so there is $k_0$
such that $\bar w_k^t < 0$ for all $k \ge k_0$, 
namely 
\begin{equation}\label{eq:k0rpm}
 \frac{r_+^2}{r_-^2}\frac{r_+-\lambda^t}{r_--\lambda^t} \ge
\left(\frac{r_+}{r_-}\right)^{k_0} > \frac{r_+}{r_-}\frac{r_+-\lambda^t}{r_--\lambda^t},
\end{equation}
which results in 
$k_0 = \left\lceil \frac{\log \frac{r_+(r_+-\lambda^t)}{r_-(r_--\lambda^t)}}
{\log\frac{r_+}{r_-}}\right\rceil+1$. 

(v)\ If $t = t_1 < 1$, or when $t=t_2$,
then $r_- = r_+ = \frac12$, and the general solution is
$$
\bar w_k^t = \frac{(1-\lambda)^t}{2^k} \ \left( 4(1-\lambda^t) + 2k(2\lambda^t-1) \right).
$$
If $\lambda^t \ge \frac12$ (\ie $t = t_1 \le 1$), then 
the coefficient $4(1-\lambda^t) + 2k(2\lambda^t-1) > 0$
and hence $\bar w_k^t > 0$ for all $k$.
If $\lambda^t < \frac12$, then the coefficient
$4(1-\lambda^t) + 2k(2\lambda^t-1) < 0$
for all $k \ge k_0 = \left \lceil 2(1-\lambda^t)/(1-2\lambda^t) \right\rceil+1$.

(vi)\ Finally, if $t < t_1 < 1$,  or in general when $t < t_2$,
then the roots are complex. Together with the initial values
$\bar w^t_1 = (1-\lambda)^t$ and $\bar w^t_2 = \lambda^t (1-\lambda)^t$,
we find the solution
\begin{eqnarray}\label{eq:tildewt<t0}
\bar w_k^t &=& \frac{(1-\lambda)^t}{2^k} \left[
\left( 4 \cos \frac{\sqrt{\beta'}}{2} - 4\lambda^t\right) \cos \left(\frac{\sqrt{\beta'}k}{2}\right)  + \right. \nonumber \\
&& \qquad \left.  \left( 4\lambda^t \cos\left(\frac{\sqrt{\beta'}}{2}\right) - 2 \cos\left( \sqrt{\beta'}\right) \right)
\left(\frac{\sin\frac{\sqrt{\beta'}k}{2}}{\sin\frac{\sqrt{\beta'}}{2}}\right) \right] \nonumber \\
&=& \frac{2 (1-\lambda)^t}{2^k} \left[
( 2 \cos \theta - 2\lambda^t ) \cos \theta k  
+ (2\lambda^t \cos \theta - 2 \cos^2 \theta + 1)
\frac{\sin \theta k}{\sin\theta} 
\right], \quad 
\end{eqnarray}
for $\theta = \frac12 \sqrt{\beta'} = \frac12 \sqrt{(t-t_2) \log[\lambda(1-\lambda)]}$.
This is an oscillatory function in $k$, with an exponential decreasing 
coefficient $2^{-k}$.
Recall that $\bar w_2^t =  \lambda^t \bar w_1^t > 0$.
First assume that $\lambda \ge \frac12$, whence 
$\lambda^t > \frac12$. Therefore 
$$
0 < 2\cos \theta - 2\lambda^t \ll
\frac{2\lambda^t \cos \theta - \cos^2 \theta + 1}{\sin \theta},
$$ 
so the expression in the square brackets
becomes negative when 
$k_0 \approx \frac{2\pi}{\sqrt{\beta'}}$.

Now set $\lambda < \frac12$, and $\lambda^t < \frac12$ and moreover assume $t-t_2$ is small.
Then $2\lambda^t \cos \theta - 2 \cos^2 \theta + 1 < 0$, so approximating
$\sin \theta k/\sin \theta = k$ for small values of $\theta$, we find
 the expression in the square brackets
becomes negative when 
$k_0 > \frac{2\cos \theta - 2\lambda^t}{2 \cos^2 \theta - 1 - 2\lambda^t \cos \theta} \approx \frac{2(1-\lambda^t)}{1-2\lambda^t}$.

Note also that in all cases $\bar w_k^t \to 0$, and therefore
\eqref{eq:tildew} gives that $1-\sum_{k<j-1} \bar w_k^t \to 0$
as $j \to \infty$. This shows that $\sum_k\bar w_k^t = 1$.
\end{proof}

We can now use Lemma~\ref{lem:tildewp=0} to address directly the problem set up in \eqref{eq:tildew}: finding a solution $p=p(t)$ to $H(p,t)=1$ with all summands non-negative.

\begin{lemma}\label{lem:lowerboundp0} 
If $\lambda \ge \frac12$ and $t < t_1 \le 1$ is close to $t_1$, or if
$\lambda < \frac12$ and $\lambda^t$ is sufficiently close to $\frac12$,
then there is $\tau_0  = \tau_0(\lambda) > 0$ such that 
$p(t) > \frac{\tau_0}{S_{k_0}}$.
\end{lemma}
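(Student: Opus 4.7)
The plan is to show that for $p \le \tau_0/S_{k_0}$ with $\tau_0 = \tau_0(\lambda) > 0$ suitably small, the recurrence \eqref{eq:tildew} forces $\tilde w^t_{k_0}(p) < 0$, ruling out such $p$ as a valid value of $p(t)$ and hence giving $p(t) > \tau_0/S_{k_0}$.

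First I reformulate the problem using $G_j(p) := 1 - \sum_{k<j} \tilde w^t_k(p)$. From \eqref{eq:tildew} one reads off $\tilde w^t_j = e^\beta e^{-pS_{j-1}} G_{j-1}$ for $j \ge 2$, so $G_j$ satisfies the linear recurrence
$$
G_{j+1} = G_j - e^\beta e^{-pS_{j-1}} G_{j-1}, \qquad j \ge 2,
$$
with $G_1 = 1$ and $G_2 = 1 - (1-\lambda)^t e^{-p}$. Non-negativity of $\tilde w^t_{k_0}(p)$ is equivalent to $G_{k_0-1}(p) \ge 0$, whereas by Lemma~\ref{lem:tildewp=0} the value $\bar G_{k_0-1} := G_{k_0-1}(0)$ is negative. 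Writing $\delta_j := G_j(p) - \bar G_j$ and subtracting gives the forced linear recurrence
$$
\delta_{j+1} - \delta_j + e^\beta e^{-pS_{j-1}} \delta_{j-1} = e^\beta \bigl(1 - e^{-pS_{j-1}}\bigr)\, \bar G_{j-1},
$$
with $\delta_1 = 0$ and $|\delta_2| = O(p)$. For $p \le \tau_0/S_{k_0}$ and $j \le k_0$ we have $0 \le 1 - e^{-pS_{j-1}} \le \tau_0 S_{j-1}/S_{k_0}$, and the coefficient of $\delta_{j-1}$ differs from $e^\beta$ by relative $O(\tau_0)$. In the complex-root regime, the unperturbed recurrence $x_{j+1} - x_j + e^\beta x_{j-1} = 0$ has roots $r_\pm = e^{\beta/2} e^{\pm i\theta}$ with $|r_\pm| < 1$, so I solve the forced recurrence by the Duhamel formula against the corresponding Green's function.

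The key estimate is
$$
|\delta_{k_0-1}| \le C(\lambda)\, \tau_0\, |\bar G_{k_0-1}|
$$
with $C(\lambda)$ depending only on $\lambda$. The initial-condition contribution is controlled by the exponential damping $|r_+|^{k_0-3}$ applied to $|\delta_2| = O(p)$. For the Duhamel contribution, substituting the explicit oscillating--damped form of $\bar G_{k-1}$ from \eqref{eq:tildewt<t0} produces a trigonometric sum against the weights $S_{k-1}$. Applying the product-to-sum identity $\sin A \sin B = \tfrac{1}{2}[\cos(A-B) - \cos(A+B)]$ and using the Fibonacci closed form $S_k = (\gamma_+^k - \gamma_-^k)/\sqrt{5}$, the sum splits into two pieces that can each be evaluated as geometric series in $\gamma_+ e^{\pm 2i\theta}$; the resonance $(k_0-1)\theta \approx \pi$ from Lemma~\ref{lem:tildewp=0} combined with the golden-ratio identity $\gamma_+ - 1 = 1/\gamma_+$ forces an exact cancellation of the leading $\gamma_+^{k_0-1}$-sized contributions. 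The surviving sub-leading part, together with the prefactor $|\mathrm{Im}(a)|/\sin\theta$, gives $|\delta_{k_0-1}^{(\mathrm{force})}| \le C(\lambda)\, p\, S_{k_0}\, |\bar G_{k_0-1}|$. Choosing $\tau_0(\lambda) := 1/(2C(\lambda))$ then forces $|\delta_{k_0-1}| < |\bar G_{k_0-1}|$ for every $p \le \tau_0/S_{k_0}$, whence $G_{k_0-1}(p) < 0$ and so $p(t) > \tau_0/S_{k_0}$. The case $\lambda < \tfrac{1}{2}$ with $\lambda^t$ close to $\tfrac{1}{2}$ is parallel, using the asymptotic $k_0 \approx 2(1-\lambda^t)/(1-2\lambda^t)$ from Lemma~\ref{lem:tildewp=0}.

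The main obstacle is establishing the exact cancellation in the trigonometric sum. Without it, the sum would grow like $\gamma_+^{k_0-1}/\sqrt{\beta'}$, which would force $\tau_0$ to depend on $\beta'(t)$ and vanish as $t \to t_1$. The cancellation reflects a resonance between the Fibonacci growth rate $\gamma_+$ and the oscillation frequency of $\bar G$ at its first sign change, a feature specifically tied to the Fibonacci kneading map $Q(k) = \max\{k-2,0\}$ and the golden-ratio identity $\gamma_+^2 = \gamma_+ + 1$; the argument reduces to a careful geometric-series evaluation of $\sum_k \gamma_+^{k-1} e^{-2ik\theta}$ as $\theta \to 0$, uniform in $t$ near $t_1$.
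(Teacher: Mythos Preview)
Your overall strategy — show that for $p \le \tau_0/S_{k_0}$ the perturbed sequence still produces a negative term, via a Duhamel/variation-of-constants formula — is natural, but the argument has a genuine gap at its central step: the claimed bound $|\delta_{k_0-1}| \le C(\lambda)\, p\, S_{k_0}\, |\bar G_{k_0-1}|$.

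Every term in your Duhamel sum is nonnegative: for $k \le k_0-2$ one has $\bar G_{k-1} > 0$ (by definition of $k_0$), $1-e^{-pS_{k-1}} > 0$, and the Green kernel $K(k_0-1,k) = e^{(k_0-1-k)\beta/2}\sin((k_0-1-k)\theta)/\sin\theta$ is positive since $(k_0-1-k)\theta < \pi$. So no sign cancellation is available. There \emph{is} a leading-order cancellation between the two trigonometric pieces after the product-to-sum split (both are of size $\gamma_+^{k_0-1}$ with opposite signs, via $\gamma_+-1 = 1/\gamma_+$), but the surviving residual, together with the $1/\sin\theta$ prefactor, must be compared with $|\bar G_{k_0-1}| \approx A\, e^{(k_0-1)\beta/2}(k_0-1-x_0)\theta$, where $x_0 \in (k_0-2,k_0-1)$ is the continuous zero of $\sin(x\theta+\psi)$. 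The fractional part $k_0-1-x_0 \in (0,1)$ is \emph{not} bounded away from $0$ uniformly in $t$: as $t$ varies near $t_1$, $k_0(t)$ runs through all large integers and the continuous zero can land arbitrarily close to $k_0-1$. In that regime $|\bar G_{k_0-1}| = o\bigl(e^{(k_0-1)\beta/2}\theta\bigr)$, while even the single last forcing contribution $K(k_0-1,k_0-2)\cdot e^\beta p S_{k_0-3}\,\bar G_{k_0-3}$ is of order $p\,\gamma_+^{k_0} e^{(k_0-1)\beta/2}\theta$. Your target inequality would then force $C(\lambda) \ge (k_0-1-x_0)^{-1}$, which cannot be bounded in terms of $\lambda$ alone.

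The paper's proof sidesteps exactly this difficulty. Rather than comparing at the index where $\bar G$ first turns negative (where the relative error is uncontrollable), it uses the multiplicative ansatz $\eps_k = u_k(1-e^{-pS_{k-1}})\bar w_k^t$ and the key fact that $\bar w_{k+1}^t/\bar w_k^t \in [0.45,0.55]$ uniformly for $k \le k_0-10$; this produces a bounded scalar recursion $u_{k+1}=au_k-bu_{k-1}+c$ with $0<a-b<0.99$, so the relative error stays $O(\tau_0)$ up to $k_0-10$. One then simply runs the original recursion forward eleven steps by continuous dependence on initial data to conclude $\tilde w_{k_0+1}^t<0$. Your argument could be repaired by adopting the same two-stage structure (stop the Duhamel estimate at $k_0-10$, then propagate), or alternatively by targeting both $G_{k_0-1}$ and $G_{k_0}$ and using that at least one of $|\bar G_{k_0-1}|$, $|\bar G_{k_0}|$ is bounded below by $c(\lambda)\,e^{k_0\beta/2}\theta$; but as written, the claimed cancellation does not yield a $t$-uniform constant and the proof does not close.
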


\begin{proof}
Let $\bar w_k^t$ be the solution of \eqref{eq:tildew} for $p=0$ as computed in 
Lemma~\ref{lem:tildewp=0}, while we write $\w_k=\w_k(p)$ for the
case $p > 0$. 
We start by showing that, under the assumptions of the lemma, 
$\bar w^t_{k+1}/\bar w^t_k \approx \frac12$ for $1 \le k \le k_0-10$.\\
{$\bullet$ Case 1:}  $\lambda \ge \frac12$ and $t < t_1 \le 1$ is close to $t_1$.
In this case,
$2\lambda^t \cos \theta - 2 \cos^2 \theta + 1 =
(2\lambda^t-1)\cos \theta + (1+2\cos \theta)(1-\cos \theta) > 0$
for $0 \le \theta = \frac12 \sqrt{\beta'} \le \pi/2$.
With $\bar w_k^t$ as given by \eqref{eq:tildewt<t0} and
using standard trigonometric formulas, we derive that
\begin{eqnarray*}
\frac{\bar w_{k+1}^t}{\bar w_k^t}
&=& \frac12 \left(\cos \theta -
\sin \theta \frac{ (2 \cos \theta - 2\lambda^t) \sin \theta k
- \frac{2\lambda^t \cos \theta - 2 \cos^2 \theta + 1}{\sin \theta} \cos \theta k}
{  (2 \cos \theta - 2\lambda^t) \cos \theta k +
 \frac{2\lambda^t \cos \theta - 2 \cos^2 \theta + 1}{\sin \theta} \sin \theta k} \right) \nonumber \\
&\sim& \frac12 \left(\cos \theta + \frac{\sin \theta}{\tan \theta k}
 \right) \qquad \text{ as } \theta \to 0.
\end{eqnarray*}
If $10 \le k \le k_0-10$, this reduces to 
\begin{equation*}
\frac{11}{20} \ge \frac{\bar w_{k+1}^t}{\bar w_k^t} = \frac12\left(\cos \theta - \frac{\sin \theta}{\tan \theta k}\right)  
\ge \frac{9}{20} \quad \text{ for small } \theta.
\end{equation*}
$\bullet$ Case 2: $\lambda < \frac12$ and $\lambda^t$ is sufficiently close to $\frac12$.
In this case $\bar w_k^t$ is given by \eqref{eq:mt}, so
\begin{eqnarray*}
\frac{\bar w_{k+1}^t}{\bar w_k^t} &=& \frac{(\lambda^t-r_-) r_+^k + (r_+-\lambda^t) r_-^k}{(\lambda^t-r_-) r_+^{k-1} + (r_+-\lambda^t) r_-^{k-1}} \\
&=& r_+ \cdot \frac{1+ \frac{r_+-\lambda^t}{\lambda^t-r_-} \left( \frac{r_-}{r_+} \right)^k}
{1+ \frac{r_+-\lambda^t}{\lambda^t-r_-} \left( \frac{r_-}{r_+} \right)^{k-1} }
=  r_+ \cdot \frac{1+ \frac{r_+-\lambda^t}{\lambda^t-r_-} \left( \frac{r_-}{r_+} \right)^{k_0} \left( \frac{r_+}{r_-} \right)^{k_0-k} }
{1+ \frac{r_+-\lambda^t}{\lambda^t-r_-} \left( \frac{r_-}{r_+} \right)^{k_0}  \left( \frac{r_+}{r_-} \right)^{k_0-k-1} }.
\end{eqnarray*}
Using \eqref{eq:k0rpm}, we obtain
$$
r_+ \le \frac{\bar w_{k+1}^t}{\bar w_k^t} \le r_+ \cdot \frac{1 + \left( \frac{r_+}{r_-} \right)^{k_0-k+2} }
{1 + \left( \frac{r_+}{r_-} \right)^{k_0-k} } \le r_+ \left( \frac{r_+}{r_-} \right)^2.
$$
Since $r_+, r_- \to \frac12$ as $\lambda^t \to \frac12$, we obtain that  
$\frac{\bar w_{k+1}^t}{\bar w_k^t} \approx \frac12$ uniformly in $k$ in this case.

The difference between $\bar w_k^t$ and $\w_k$ is 
$\eps_k =\eps_k(p)= \w_k (p)- \bar w_k^t$.
We claim that if $p < 1/S_{k_0}$, then there is $K$ such that 
\begin{equation}\label{eq:epsk}
|\w_k - \bar w^t_k| =: |\eps_k| \le K\tau_0(1-e^{-pS_{k-1}}) \bar w_k^t \quad \text{ for all } k \le k_0-10.
\end{equation}
Since $\bar w_1^t (e^{-p}-1) = \eps_1 \le -pS_0 \bar w_1^t$
and  $\bar w_2^t (e^{-2p}-1) = \eps_2 \le -pS_1 \bar w_2^t$,
this claim holds for $k = 1,2$. 

Subtracting two successive equations
in \eqref{eq:tildew} gives the recursive relations
\begin{equation}\label{eq:wkrecursive}
\w_{k+1} = e^{-pS_{k-2}} \w_k - e^{\beta-pS_k} \w_{k-1},
\end{equation}
so for $p = 0$ this is $\bar w_{k+1}^t = \bar w_k^t - e^{\beta} \bar w_{k-1}^t$.
For $\eps_k$ we obtain
\begin{eqnarray*}
\eps_{k+1} &=&  \w_{k+1} - \bar w^t_{k+1} \\
&=& e^{-pS_{k-2}} \eps_k - e^\beta e^{-pS_k} \eps_{k-1} + e^\beta(1-e^{-pS_k})\bar w^t_{k-1} 
-(1-e^{-pS_{k-2}}) \bar w^t_k.
\end{eqnarray*}
Write $\eps_k = u_k (1-e^{-pS_{k-1}}) \bar w_k^t$, so $u_1 = u_2 = -1$
and $u_3 \in (-1,0)$.
Then we can rewrite the above as
\begin{eqnarray*}
u_{k+1} &=& e^{-pS_{k-2}} \frac{1-e^{-pS_{k-1}}}{1-e^{-pS_k}} \frac{\bar w_k^t}{\bar w_{k+1}^t} u_k
- e^\beta e^{-pS_k}\frac{1-e^{-pS_{k-2}}}{1-e^{-pS_k}}  \frac{\bar w_{k-1}^t}{\bar w_{k+1}^t} u_{k-1} \\
&& \qquad
+\ \frac{\bar w_k^t}{\bar w_{k+1}^t} \left(e^\beta \frac{\bar w_{k-1}^t}{\bar w_k^t} - \frac{1-e^{-pS_{k-2}}}{1-e^{-pS_k}}\right) \\
&:=& au_k - bu_{k-1} + c.
\end{eqnarray*}
The numbers $a,b,c$ depend on $k$, but since
$\frac{\bar w_{k+1}^t}{\bar w_k^t} \approx \frac{\bar w_k^t}{\bar w_{k-1}^t} \in [0.45, 0.55]$ for all $10 \le k \le k_0-10$,
and $e^\beta \approx \frac14$,
we have $c \in [0.1, 0.5]$ and $0 < a-b < 0.99$.
Therefore the orbit $(u_k)_{k \ge 1}$ is bounded,
say $|u_k| \le K$ for all $k$, and in fact positive from the moment that two
consecutive terms are positive.
In particular, $-1 \le u_k \le K$ for all $k$, and 
$|\eps_k| \le K(1-e^{-pS_{k-1}}) \bar w_k^t$ for all
$k \le k_0-10$, proving Claim~\eqref{eq:epsk}.
If we now take $p \le \tau_0/S_{k_0}$,
then $|\eps_k| \le K \tau_0 \gamma^{-11} \bar w_k^t$ for $k = k_0-10$.
Propagating this tiny error (provided $\tau_0$ is small)
for another eleven iterates, \ie eleven recursive steps
$\w_{k+1} = e^{-pS_{k-2}} \w_k - e^{\beta-pS_k} \w_{k-1}$,
we find that $\w_{k_0+1} < 0$.
This shows that $p(t) > \tau_0/S_{k_0}$.
\end{proof}

Recall that $\gamma = \frac12(1+\sqrt{5})$ 
and $\Gamma  = \frac{2 \log \gamma}{\sqrt{-\log[\lambda(1-\lambda]}}$. 

\begin{proposition}\label{prop:lowerboundp0} 
There are $\tau_0  = \tau_0(\lambda)$ and $\tilde C = \tilde C(\lambda) > 0$ such that 
$$
p(t) > \frac{\tau_0}{S_{k_0}}
\ge 
\begin{cases}
\tau_0  e^{-\pi\Gamma/\sqrt{t_1-t}} & \text{ if } t < t_1 \le 1 \text{ close to $t_1$ and } \lambda \ge \frac12;\\
\tau_0 \tilde C (1-t)^{\frac{\log(\gamma)}{\log R} } & \text{ if } t < 1 \text{ close to $1$  and } \lambda < \frac12,
\end{cases} 
$$
where $\log R = 2\log(1+\sqrt{1-4\lambda^t(1-\lambda)^t}) - \log[4\lambda^t(1-\lambda)^t] \sim 2(1-2\lambda)$ as $t \to 1$ and $\lambda \to \frac12$.
\end{proposition}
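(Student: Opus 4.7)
The plan is straightforward: Lemma~\ref{lem:lowerboundp0} already yields the first inequality $p(t) > \tau_0/S_{k_0}$. Combined with the Fibonacci asymptotic $S_k \le C(\lambda)\gamma_+^k$, everything reduces to controlling $k_0\log\gamma_+$ in each regime using the explicit formulas for $k_0 = k_0(\lambda,t)$ derived in Lemma~\ref{lem:tildewp=0}; the two displayed bounds then follow by direct substitution, with the appropriate $\lambda$-dependent constants absorbed into $\tau_0$ and $\tilde C$.

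For $\lambda \ge \tfrac12$, I would invoke case (vi) of Lemma~\ref{lem:tildewp=0}, which gives $k_0 \approx 2\pi/\sqrt{\beta'}$. Since $t_1 = t_2$ here, $\sqrt{\beta'} = \sqrt{(t_1-t)(-\log[\lambda(1-\lambda)])}$, and multiplying by $\log\gamma_+$ produces exactly $\pi\Gamma/\sqrt{t_1-t}$ by the definition of $\Gamma$. The first bound then follows from $S_{k_0} \le C(\lambda)\gamma_+^{k_0}$.

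For $\lambda < \tfrac12$ and $t$ close to $1 = t_1$, I would use case (iii) of Lemma~\ref{lem:tildewp=0}. Two steps are needed. First, identify $\log(r_+/r_-) = \log R$ via the algebraic relations $r_+r_- = e^\beta = \lambda^t(1-\lambda)^t$ and $r_++r_- = 1$, which give $\log(r_+/r_-) = 2\log r_+ - \beta = 2\log(1+\sqrt{1-4\lambda^t(1-\lambda)^t}) - \log[4\lambda^t(1-\lambda)^t]$. Second, Taylor-expand the numerator of the formula for $k_0$ about $t = 1$: the key observation is that $r_-|_{t=1} = \lambda = \lambda^t|_{t=1}$, because $\sqrt{1-4\lambda(1-\lambda)} = 1-2\lambda$ for $\lambda < \tfrac12$. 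Hence $r_- - \lambda^t$ vanishes linearly in $(1-t)$ while the remaining factors $r_+$, $r_-$, $r_+ - \lambda^t$ tend to nonzero constants. This gives $k_0 = -\log(1-t)/\log R + O_\lambda(1)$, so $\gamma_+^{k_0}$ is of order $(1-t)^{-\log\gamma_+/\log R}$ up to a constant, producing the second bound. The asymptotic $\log R \sim 2(1-2\lambda)$ as $\lambda \to \tfrac12, t \to 1$ then follows by setting $t = 1$ (so $\log R = \log[(1-\lambda)/\lambda]$) and expanding about $\lambda = \tfrac12$.

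The main non-routine step is verifying that the linear coefficient in the Taylor expansion of $r_- - \lambda^t$, namely $\lambda[-\lambda\log\lambda - (1-\lambda)\log(1-\lambda)]/(1-2\lambda)$, is strictly positive on $(0,\tfrac12)$; this is needed to guarantee $\tilde C(\lambda) > 0$. One also needs the hypothesis "$\lambda^t$ sufficiently close to $\tfrac12$" of Lemma~\ref{lem:lowerboundp0} to cover the range of interest, which holds in the acip regime $\lambda \in [\tfrac{2}{3+\sqrt 5}, \tfrac12)$ of Theorem~\ref{mainthm:thermo_Fibo}(a) whenever $t$ is close enough to $1$.
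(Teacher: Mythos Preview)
Your proposal is correct and follows essentially the same route as the paper: invoke Lemma~\ref{lem:lowerboundp0} for $p(t) > \tau_0/S_{k_0}$, then use $S_{k_0} \asymp \gamma_+^{k_0}$ together with the $k_0$-estimates of Lemma~\ref{lem:tildewp=0} (the asymptotic $k_0 \approx 2\pi/\sqrt{\beta'}$ for $\lambda \ge \tfrac12$, and the Taylor expansion of $r_- - \lambda^t$ about $t=1$ for $\lambda < \tfrac12$, noting that $r_+/r_- = R$). Your remark about the hypothesis ``$\lambda^t$ close to $\tfrac12$'' in Lemma~\ref{lem:lowerboundp0} is well taken; the paper glosses over this, consistent with the restriction $\lambda \in [\tfrac{2}{3+\sqrt5}, \tfrac12)$ in Theorem~\ref{mainthm:thermo_Fibo}(a).
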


\begin{proof}
Lemma~\ref{lem:lowerboundp0} gives $p(t) > \frac{\tau_0}{S_{k_0}}$.
For the second inequality, first assume that 
$\lambda \ge \frac12$ and $t < t_1 \le 1$.
Using the estimate of $k_0$ from Lemma~\ref{lem:tildewp=0},
and $\beta' = \sqrt{-\log[\lambda(1-\lambda)] (t_1-t)}$, we find 
\begin{equation*}
p(t) \ge \frac{\tau_0}{S_{k_0}} \approx \tau_0 e^{-k_0 \log \gamma}
\ge \tau_0 e^{-\frac{\pi \Gamma}{\sqrt{t_1-t}} }.
\end{equation*}
Now for the case $\lambda < \frac12$ and $t < 1$,
recall from \eqref{eq:k0rpm} that
$$
\frac{\tau_0}{S_{k_0}} \ge
\tau_0 \left( \frac{r_+}{r_-} \right)^{- k_0 \frac{\log(\gamma)}{\log(\frac{r_+}{r_-})}}
\ge \left( \frac{r_+^2}{r_-^2}  \cdot \frac{r_+-\lambda^t}{r_--\lambda^t}\right)^{- \frac{\log(\gamma)}{\log(\frac{r_+}{r_-})}}.
$$
We work out the asymptotics for fixed $\lambda < \frac12$ and 
first order Taylor expansions for $t \approx 1$.
\begin{eqnarray*}
4e^\beta &=& 4\lambda(1-\lambda)\left(1+ \log[\lambda(1-\lambda)] (t-1) \right)  + \hot \\
\sqrt{1-4e^\beta} &=& (1-2\lambda) \sqrt{1-\frac{4\lambda(1-\lambda)}{(1-2\lambda)^2} \log[\lambda(1-\lambda)] (t-1) + \hot} \\
&=&   (1-2\lambda) \left(1-\frac{2\lambda(1-\lambda)}{(1-2\lambda)^2} \log[\lambda(1-\lambda)] (t-1) \right) + \hot\\
R := \frac{r_+}{r_-} &=& \frac{(1+\sqrt{1-4e^\beta})^2}{4e^\beta} = 1+2(1-2\lambda) + \hot \\
r_+ - \lambda^t &=& 1-2\lambda + \hot \\
r_--\lambda^t  &=& \left( \frac{2\lambda(1-\lambda)}{1-2\lambda} \log[\lambda(1-\lambda)] - 2\lambda \log \lambda \right) (t-1) + \hot
\end{eqnarray*}
This gives exponent $\log(\gamma)/\log(R)$ (which is $\ \sim 
\log(\gamma)/(2(1-2\lambda))$ as $\lambda \to \frac12$) and
$$
\frac{r_+^2}{r_-^2}  \cdot \frac{r_+-\lambda^t}{r_--\lambda^t}
= \frac{(1+4(1-2\lambda)) \left(\frac{1-2\lambda}
{ -\lambda(1-\lambda) \log[\lambda(1-\lambda)] + 2\lambda(1-2\lambda) \log \lambda}\right)
}{1-t}  + \hot
$$
Hence the estimate holds for $0 < \tilde C \sim
\left( \frac{ -\lambda(1-\lambda) \log[\lambda(1-\lambda)] + 2\lambda(1-2\lambda) \log \lambda}{(1+4(1-2\lambda))(1-2\lambda)} \right)^{\frac{\log \gamma}{2\log(1-2\lambda)}}
$ as $\lambda \to \frac12$.
\end{proof}

\begin{lemma}\label{lem:tildewt<t0p>0}
If $p > 0$, then $\w_k \to 0$ super-exponentially:
\begin{equation}\label{eq:wkalpha}
\w_k = 
\begin{cases}
e^{\beta k - p S_{k+1} + \alpha_k} & \text{ if } 
 \sum_k \w_k = 1, \\
e^{\beta  - p S_{k-1} + \alpha_k}& \text{ otherwise, }
\end{cases}
\end{equation}
where  $(\alpha_k)_{k \ge 1}$ is a convergent
sequence depending on $p$ and $t$.
\end{lemma}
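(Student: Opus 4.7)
The plan is to work directly with the recursion \eqref{eq:tildew}, which in view of $e^\beta = \lambda^t(1-\lambda)^t$ reads
$$
\tilde w_j^t = e^{\beta - pS_{j-1}}\Bigl(1 - \sum_{k<j-1}\tilde w_k^t\Bigr),
$$
and to exploit two facts: (i) $p>0$, and (ii) for Fibonacci, $S_{k+1}=S_k+S_{k-1}$ so $S_k\sim\gamma_+^k$ and $\sum_{i=0}^n S_i = S_{n+2}-2$ by a standard induction. First I would establish a crude uniform bound $|\tilde w_j^t|\le C$ by induction on $j$: since $\tilde w_1^t, \tilde w_2^t$ are explicit and bounded, and $|1-\sum_{k<j-1}\tilde w_k^t|$ grows at most linearly in $j$ while $e^{-pS_{j-1}}$ decays super-exponentially, the induction closes easily. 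Once boundedness holds, $|\tilde w_j^t|\le (\text{const})\cdot e^{-pS_{j-1}}$, so $H(p,t):=\sum_k\tilde w_k^t$ is absolutely convergent and each tail $T_j:=\sum_{k\ge j}\tilde w_k^t\to 0$ super-exponentially.

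For the ``otherwise'' case ($H\ne 1$), rewrite the recursion as
$$
\tilde w_j^t = e^{\beta-pS_{j-1}}\bigl((1-H)+T_{j-1}\bigr).
$$
By the previous step, $T_{j-1}$ decays like $e^{-pS_{j-2}}$, so $T_{j-1}/(1-H)\to 0$ exponentially fast. Taking logarithms yields
$$
\log\tilde w_j^t = \beta-pS_{j-1}+\log(1-H)+\log\!\Bigl(1+\tfrac{T_{j-1}}{1-H}\Bigr),
$$
which is exactly $\beta - pS_{j-1}+\alpha_j$ with $\alpha_j\to\log(1-H)$ convergent (and, in fact, the approach is exponentially fast).

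For the case $H(p,t)=1$, the recursion becomes $\tilde w_j^t = e^{\beta-pS_{j-1}}T_{j-1}$. Here I set up a bootstrap on the ratios $\rho_j:=\tilde w_j^t/\tilde w_{j-1}^t$. Writing $T_{j-1}=\tilde w_{j-1}^t+T_j$ and using $\tilde w_{j+1}^t=e^{\beta-pS_j}T_j$ one derives the identity
$$
\rho_j = e^{\beta-pS_{j-1}}\bigl(1+\rho_j\rho_{j+1}e^{pS_j-\beta}\bigr).
$$
Starting from the crude bound $|\rho_j|\le\tfrac12$ for large $j$ (from Step 1) and iterating once, this forces $\rho_j = e^{\beta-pS_{j-1}}(1+O(\rho_j))$, so $\log\rho_j = \beta-pS_{j-1}+\epsilon_j$ with $|\epsilon_j|=O(\rho_j)$. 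Because $\rho_j$ decays super-exponentially, $\sum|\epsilon_j|<\infty$. Telescoping and applying the Fibonacci identity $\sum_{i=1}^{j-1}S_i = S_{j+1}-3$,
$$
\log\tilde w_j^t = \log\tilde w_1^t+(j-1)\beta-p(S_{j+1}-3)+\sum_{k=2}^{j}\epsilon_k = j\beta - pS_{j+1}+\alpha_j,
$$
where $\alpha_j=\log\tilde w_1^t-\beta+3p+\sum_{k=2}^{j}\epsilon_k$ converges as $j\to\infty$.

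The main obstacle is the simultaneous self-consistency in the critical case $H=1$: one must control $T_{j-1}$ accurately by its leading term $\tilde w_{j-1}^t$, which requires already knowing that $\rho_j\to 0$. The bootstrap above resolves this because the error in approximating $T_{j-1}\approx\tilde w_{j-1}^t$ is of relative size $\rho_j$, which is itself forced to be $\approx e^{\beta-pS_{j-1}}\to 0$; the $\epsilon_j$-sum then converges because $S_{j-1}$ grows like $\gamma_+^{j-1}$, and the sole special input used is the Fibonacci combinatorics through the summation identity for $S_i$.
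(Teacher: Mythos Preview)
Your argument is correct and follows essentially the same route as the paper: both hinge on the second–order relation obtained by differencing \eqref{eq:tildew} (the paper writes it as \eqref{eq:difference_tildew}, you as the $\rho_j$–identity), and both show that the resulting error terms are summable. The paper plugs in the ansatz $\tilde w_k^t=e^{\beta k-pS_{k+1}+\alpha_k}$ directly and reduces to the recursion $1-e^{\beta-pS_{k-1}-\eps_{k+1}}=e^{\eps_k}$, whereas you derive the exponent $S_{k+1}$ a posteriori by telescoping $\log\rho_k\approx\beta-pS_{k-1}$ and invoking the Fibonacci summation $\sum_{i=1}^{j-1}S_i=S_{j+1}-3$; this makes the combinatorial input more visible but is otherwise the same computation.

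One point to tighten: the crude bound $|\tilde w_j^t|\le C e^{-pS_{j-1}}$ from your Step~1 does \emph{not} by itself give $|\rho_j|\le\tfrac12$, since it provides no lower bound for $\tilde w_{j-1}^t$. What closes the loop is positivity (implicit in writing $\tilde w_k^t=e^{\dots}$): if all $\tilde w_k^t>0$ then your identity rearranges to $\rho_j(1-e^{pS_{j-2}}\rho_{j+1})=e^{\beta-pS_{j-1}}>0$, forcing $\rho_{j+1}<e^{-pS_{j-2}}$; feeding this back gives $e^{pS_{j-2}}\rho_{j+1}<e^{-pS_{j-3}}\to 0$, and your bootstrap proceeds as written.
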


\begin{proof}
First note that if $\sum_k \w_k \neq 1$, then the factor
$e^{-pS_{k-1}}$ is the only factor in \eqref{eq:tildew} that tends to zero.
Hence the final statement of the lemma is immediate.
So assume now that $\sum_k \w_k = 1$, and $\w_k$ decreases faster than 
$e^{\beta - p S_{k-1}}$.

Taking a linear combination of two consecutive equations in 
\eqref{eq:tildew}, we obtain
\begin{equation}\label{eq:difference_tildew}
e^{pS_{k-1}} \w_k - e^{pS_k} \w_{k+1} = 
e^\beta \w_{k-1}.
\end{equation}
By setting $\w_k = e^{\beta k - p S_{k+1} + \alpha_k}$,
for some $\alpha_k\in \R$, we rewrite \eqref{eq:difference_tildew} as
$$
1-e^{\beta - pS_{k-1} + \alpha_{k+1}-\alpha_k} = e^{\alpha_{k-1}-\alpha_k}. 
$$
Abbreviating $\eps_k = \alpha_k - \alpha_{k-1}$, we have
\begin{equation*}
1-e^{\beta - pS_{k-1} - \eps_{k+1}} = e^{\eps_k}. 
\end{equation*}
This means that $\eps_k \to 0$ exponentially and hence 
$\alpha_k$ converges to some limit 
$\alpha_\infty = \alpha_\infty(p,t)$,
exponentially fast in $k$.  Therefore, $\w_k \to 0$ 
super-exponentially in $k$, whenever $p > 0$.
\end{proof}

\subsection{Upper bounds on $\Pconf(\phi_t)$}

We define upper bounds on $p(t)$ using a non-autonomous dynamical system.  The following lemma will be applied to this.

\begin{lemma}\label{lem:eta}
The map $\eta:r \mapsto 1-\frac{\xi}{4r}$ has
$$
\begin{cases}
\text{one fixed point } \frac12 & \text{if } \xi = 1;\\
\text{two fixed points } \FP_\pm =  \frac12(1\pm \sqrt{1-\xi}) \qquad & \text{if } \xi < 1;\\
\text{no fixed points } & \text{if } \xi > 1.
\end{cases}
$$
If $\xi < 1$, then the largest fixed point $\frac12(1+\sqrt{1-\xi})$
is attracting; if $\xi \le 0$, then the interval $[1, \infty)$ is invariant.
If $\xi > 1$, and $\delta = \sqrt{ \frac{2(\xi-1)}{3(\sqrt{\xi}+1)} }$
then it takes an orbit at least $\sqrt{\frac{3(\sqrt{\xi}+1)}{2(\xi-1)}}$
iterates to pass through the interval $[\frac{\sqrt\xi}{2}-\delta,
\frac{\sqrt\xi}{2}+\delta]$. 
\end{lemma}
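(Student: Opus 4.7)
The proof splits into fixed-point analysis and, for $\xi > 1$, a passage-time estimate. For the fixed points, I would rewrite $r = \eta(r)$ as $4r^2 - 4r + \xi = 0$ and apply the quadratic formula to obtain $r = (1 \pm \sqrt{1-\xi})/2 = A_\pm$; this handles all three cases simultaneously. Stability at $A_+$ follows from $\eta'(r) = \xi/(4r^2)$ combined with the defining relation $\xi = 4A_+(1-A_+)$, which simplifies to $\eta'(A_+) = (1-A_+)/A_+ = A_-/A_+ < 1$ whenever $\xi \in (0,1)$. Forward-invariance of $[1, \infty)$ when $\xi \le 0$ is immediate since $\eta(r) = 1 - \xi/(4r) \ge 1$ whenever $r \ge 1$.

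For the passage-time estimate with $\xi > 1$, the central algebraic identity is
\[
g(r_* + u) := (r_* + u) - \eta(r_* + u) = (\sqrt{\xi} - 1) + \frac{u^2}{r_* + u}, \qquad r_* := \tfrac{\sqrt{\xi}}{2},
\]
verified by direct computation (clear denominators and recognise $4r^2 - 4r + \xi$). Combined with $g''(r) = \xi/(2r^3) > 0$, this shows that $g$ is strictly positive (so iterates strictly decrease), is minimised at $r_*$ with value $\sqrt{\xi} - 1$, and on $[r_* - \delta, r_* + \delta]$ attains its maximum $g_{\max} = (\sqrt{\xi}-1)\bigl(1 + \tfrac{2}{3(r_*-\delta)}\bigr)$ at the left endpoint, after substituting $\delta^2 = 2(\sqrt{\xi}-1)/3$.

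The heart of the plan is the elementary bound $N \ge 2\delta/g_{\max}$: each iterate inside the interval reduces $r$ by at most $g_{\max}$, so the orbit needs at least that many iterates to traverse the full length $2\delta$. With the prescribed $\delta$, this rearranges exactly to $N \ge \sqrt{3(\sqrt{\xi}+1)/(2(\xi-1))}$ precisely when $g_{\max} \le \tfrac43(\sqrt{\xi}-1)$, which is the algebraic condition $r_* - \delta \ge 2$.

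The main obstacle is that this last condition fails in the asymptotic regime $\xi \downarrow 1$ (where $r_* \to 1/2$), which is exactly where the lemma is invoked. There I would replace the crude maximum bound by a continuous-time comparison with the ODE $\dot r = -g(r)$: the pointwise bound $g(r_*+u) \le (\sqrt{\xi}-1) + u^2/(r_*-\delta)$ integrates in closed form to
\[
N \ge 2\sqrt{\tfrac{r_* - \delta}{\sqrt{\xi}-1}} \, \arctan\!\left(\sqrt{\tfrac{2}{3(r_*-\delta)}}\right),
\]
with the discrete-to-continuous gap absorbed by the convexity of $g$ on each side of $r_*$ (Euler under- and overshoots cancel across the symmetric interval). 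The delicate final step is a monotonicity check verifying that this arctangent expression majorises $\sqrt{3(\sqrt{\xi}+1)/(2(\xi-1))}$ uniformly in $\xi > 1$.
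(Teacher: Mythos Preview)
Your fixed-point analysis, stability computation, and invariance check are correct and match the paper's (which dismisses them as ``straightforward calculus'').

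For the passage-time estimate, you and the paper use the same core idea: bound the step size $g(r)=r-\eta(r)$ uniformly on $[r_*-\delta,r_*+\delta]$ and conclude $N\ge 2\delta/g_{\max}$. Your exact identity $g(r_*+u)=(\sqrt{\xi}-1)+u^2/(r_*+u)$ is cleaner than what the paper does; the paper instead uses a second-order Taylor expansion at $r_*$ to get $g(r_*\pm\delta)\le(\sqrt{\xi}-1)+\frac{2}{\sqrt\xi}\delta^2+O(\delta^3)$, explicitly restricts to ``$\xi$ sufficiently close to $1$'', and then chooses $\delta$ to optimise the resulting ratio.

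Your worry about the constant is entirely legitimate, but you should know that the paper's own argument has \emph{exactly the same gap}: plugging $\delta^2=\tfrac{2}{3}(\sqrt\xi-1)$ into the paper's bound gives $g_{\max}\approx\tfrac{7}{3}(\sqrt\xi-1)$ and hence $N\gtrsim\tfrac{4}{7}\sqrt{3(\sqrt\xi+1)/(2(\xi-1))}$, not the stated value. The lemma as written overstates the constant; what the proof (either version) actually yields is $N\ge C\sqrt{(\sqrt\xi+1)/(\xi-1)}$ for an absolute $C\in(\tfrac12,1)$ and $\xi$ near $1$. This is harmless downstream, because the only use of the lemma (in the proof of the subsequent lemma) needs only the order $1/\sqrt{\xi-1}$, and the slack is absorbed into the constant $\tau_1$ and the factor $5/6$ appearing there.

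So your ODE comparison is over-engineering for the purpose at hand, and as written it carries its own loose ends (the ``convexity cancellation'' of the discrete--continuous error is not justified, and the final monotonicity check is asserted rather than proved). The pragmatic fix is to state the passage-time bound with an unspecified positive constant, note that this suffices for the application, and move on.
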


\begin{proof}
The first statements follow from straightforward calculus.
For the last statement, observe that 
$\eta'(r) = 1$ for $r = \frac{\sqrt{\xi}}{2}$ and  
the vertical distance $r-\eta(r) = \sqrt{\xi}-1$.
Furthermore
$(r+\delta)-\eta(r+\delta) \le (\sqrt{\xi}-1) + \frac{2}{\sqrt{\xi}}\delta^2$.
and  $(r-\delta)-\eta(r-\delta) \le (\sqrt{\xi}-1) + \frac{2}{\sqrt{\xi}}\delta^2+O(\delta^3)$.
Hence for $\xi$ sufficiently close to $1$, we have 
$x-\eta(x) \le (\sqrt{\xi}-1)+(3/2)^2\delta^2$ for all 
$x \in [\frac{\sqrt\xi}{2}-\delta, \frac{\sqrt\xi}{2}+\delta]$,
so it takes an orbit at least
$2\delta/[(\sqrt{\xi}-1)+(3\delta/2)^2]$ iterates to pass through this interval.
This quantity is maximised for 
\begin{equation}\label{eq:delta}
\delta = \sqrt{\frac{2(\sqrt{\xi}-1)}{3}} = 
\sqrt{ \frac{2 (\xi-1)}{3(\sqrt{\xi}+1)} },
\end{equation}
in which case 
$(r+\delta)-\eta(r+\delta) \le (1+\frac{4}{3\sqrt \xi})(\sqrt{\xi}-1)$.
In this case, it takes at least $\sqrt{\frac{3(\sqrt{\xi}+1)}{2(\xi-1)}}$
 iterates to pass through the interval.
\end{proof}

\begin{lemma}\label{lem:eta2}
Let $(u_k)$ be given by 
$$
u_1 = \lambda^t \qquad \text{ and } \qquad 
u_{k+1} = \eta_k(u_k) := 1 - \frac{e^{\beta' - p S_{k-2}}}{4 u_k}.
$$
There exist constants $\tau_1 = \tau_1(\lambda), \tau_1' = \tau_1'(\lambda)$ 
(with precise values given in the proof) such that if
$$
p > \begin{cases}
\tau_1 e^{-\frac{5 \Gamma}{6 \sqrt{t_1-t}} } & \text{ if } \lambda \ge \frac12,\ t < t_1 \text{ close to } t_1,\quad \Gamma = \frac{2\log \gamma}{\sqrt{-\log [\lambda(1-\lambda)]}}\\
\tau_1' (1-t)^{ \frac{\lambda \log \gamma}{2t(1-2\lambda)} } & \text{ if } \lambda < \frac12,\ t < 1 \text{ close to } 1,
\end{cases}
$$
then $u_k \ge \frac13$ for all $k$ and $u_k \to 1$ exponentially.
\end{lemma}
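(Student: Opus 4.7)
The plan is to trace the non-autonomous iteration via the slowly varying parameter $\xi_k := e^{\beta' - pS_{k-2}}$, which decreases monotonically from $\xi_2 \approx e^{\beta'}$ toward $0$. Consider first the case $\lambda \ge \tfrac12$, so that $\beta' > 0$ and $\xi_k > 1$ for small $k$; the map $\eta_k$ then has no fixed point, and a brief calculation shows $\eta_k(r) < r$, so $(u_k)$ descends monotonically. Let $k_\ast := \max\{k : \xi_k \ge 1\}$. Once $k > k_\ast$, the pair of fixed points $A_\pm(\xi_k) = \tfrac12(1 \pm \sqrt{1-\xi_k})$ of Lemma~\ref{lem:eta} emerges, and the attractor $A_+(\xi_k) \to 1$ as $\xi_k \to 0$. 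The proof then splits into (i) showing the descending orbit stays above $\tfrac13$ up to time $k_\ast + 1$; (ii) showing $u_{k_\ast + 1}$ lies in the basin of $A_+$; (iii) propagating attraction to $1$ for $k > k_\ast$.

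For (i), I would apply Lemma~\ref{lem:eta} with the worst-case value $\xi = \xi_2 \approx 1+\beta'$: any orbit requires at least $N_{\mathrm{bott}} := \sqrt{3(\sqrt{\xi_2}+1)/(2(\xi_2-1))}$ iterations, of order $\sqrt{3/\beta'}$, to exit the bottleneck interval centred at $r_c = \sqrt{\xi_2}/2 \approx \tfrac12$ with half-width $\delta$ of order $\sqrt{\beta'/3}$. Because $\eta_k(r)$ is pointwise non-decreasing in $\xi$ at fixed $r$, and $\xi_k \le \xi_2$ for all $k \ge 2$, this passage-time lower bound transfers from the autonomous to the non-autonomous orbit. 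Using Fibonacci asymptotics $S_n \sim \gamma_+^n/\sqrt 5$ and the identity $\Gamma/\sqrt{t_1-t} = 2\log\gamma_+/\sqrt{\beta'}$, the hypothesis $p > \tau_1 e^{-5\Gamma/(6\sqrt{t_1-t})}$ becomes $k_\ast \le \tfrac{5}{3\sqrt{\beta'}} + O(1)$, with the additive error absorbed into $\tau_1$. The numerical inequality $\tfrac{5}{3} < \sqrt{3}$ then forces $k_\ast + 1 < N_{\mathrm{bott}}$, so at time $k_\ast + 1$ the orbit is still inside the bottleneck interval, giving $u_{k_\ast + 1} \ge \tfrac12 - \sqrt{\beta'/3} \ge \tfrac13$ for $\beta'$ sufficiently small (which is automatic near the phase transition). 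Since $(u_k)$ is monotonically decreasing on $k \le k_\ast + 1$ from $u_1 = \lambda^t \ge \tfrac12$, the intermediate iterates also lie in $[\tfrac13, 1]$.

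For (ii)--(iii), note that $A_-(\xi_{k_\ast + 1}) \le \tfrac12$ while $u_{k_\ast + 1}$ is within $\sqrt{\beta'/3}$ of $\tfrac12$, so $u_{k_\ast + 1}$ lies in the basin of $A_+(\xi_{k_\ast + 1})$. For $k > k_\ast$ the quantity $pS_{k-2}$ grows like $\gamma_+^k$, so $\xi_k \to 0$ super-exponentially, $A_+(\xi_k) = 1 - \xi_k/4 + O(\xi_k^2) \to 1$, and the local contraction factor $|\eta_k'(A_+(\xi_k))| = \xi_k/(4 A_+(\xi_k)^2)$ tends to $0$ at the same rate. A telescoping estimate then shows $|u_k - A_+(\xi_k)|$ shrinks faster than $1 - A_+(\xi_k)$, yielding $u_k \to 1$ exponentially and $u_k \ge \tfrac13$ throughout. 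For $\lambda < \tfrac12$ with $t < 1$ close to $1$ the argument is analogous but with $\xi_k < 1$ from the start; the two fixed points $A_\pm(\xi_k)$ are close together, and the exponent $\lambda\log\gamma_+/(2t(1-2\lambda))$ in the hypothesis is tuned to keep $(u_k)$ above the repeller $A_-(\xi_k)$, using the separation rate $\log(r_+/r_-) \sim 2(1-2\lambda)$ as $\lambda \to \tfrac12$ that already appeared in Proposition~\ref{prop:lowerboundp0}.

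The main obstacle is the non-autonomous nature of the iteration: Lemma~\ref{lem:eta} is stated only for a fixed $\xi$, so transplanting the passage-time bound to $(u_k)$ requires pointwise monotonicity of $\eta_k$ in $\xi_k$ together with careful treatment of the two handovers (transient entry into the bottleneck, and bottleneck exit into the basin of $A_+$). These technicalities, together with matching the asymptotics in $\beta'$ and $1-t$ in the two regimes, ultimately determine the numerical values of the constants $\tau_1$ and $\tau_1'$.
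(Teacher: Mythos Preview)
Your overall architecture for the case $\lambda \ge \tfrac12$ matches the paper's: use the bottleneck passage-time bound from Lemma~\ref{lem:eta}, compare it against the time at which $\xi_k$ drops through the saddle-node value, and then let the attractor $A_+(\xi_k)$ take over. However, step~(ii) as written has a genuine gap. You argue that $A_-(\xi_{k_\ast+1}) \le \tfrac12$ and $|u_{k_\ast+1} - \tfrac12| \le \delta \sim \sqrt{\beta'/3}$, and conclude $u_{k_\ast+1}$ lies in the basin of $A_+$. But at index $k_\ast+1$ the parameter $\xi_{k_\ast+1}$ is only \emph{barely} below $1$, so $A_-(\xi_{k_\ast+1}) = \tfrac12(1-\sqrt{1-\xi_{k_\ast+1}})$ sits at distance $o(1)$ below $\tfrac12$, whereas $u_{k_\ast+1}$ can be as low as $\tfrac12-\delta$. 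Nothing prevents $u_{k_\ast+1} < A_-(\xi_{k_\ast+1})$, in which case the orbit would continue to fall. The paper closes this gap not by stopping at $k_\ast$ but by waiting until $p S_{k-2}$ exceeds $2(1+G^2)(\sqrt\xi - 1)$, which forces the repeller $A_-(\xi_k)$ to emerge to the \emph{left of the entire tunnel}, i.e.\ $A_-(\xi_k) \le \tfrac{\sqrt\xi}{2}-\delta$. Since the orbit is still inside the tunnel at that moment (by the passage-time bound), it is then automatically above $A_-$. This extra requirement, not merely $\xi_k<1$, is what the constant $\tau_1 = 5\gamma_+^2(\sqrt\xi-1)$ and the slack $5/6 < \sqrt3 /2$ are calibrated to deliver.

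Your sketch for $\lambda < \tfrac12$ contains a more basic error. You write that the exponent is ``tuned to keep $(u_k)$ above the repeller $A_-(\xi_k)$'', but in fact $u_1 = \lambda^t$ lies \emph{below} $A_-$ for $t<1$ (they coincide only at $t=1$), with gap $\eps(t) = A_- - u_1 = C(1-t)+O((1-t)^2)$. Since $\eta_k(r) < r$ for $0 < r < A_-$, the orbit drifts further downward. The actual mechanism is a race: the non-autonomous repeller $A_-(\xi_k)$ also descends as $\xi_k$ shrinks, and the hypothesis on $p$ (with the specific exponent $\frac{\lambda\log\gamma_+}{2t(1-2\lambda)}$) guarantees that $A_-(\xi_k)$ overtakes $u_k$ from above before $u_k$ has fallen more than $\sqrt\eps$ from its starting position. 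The paper quantifies this via the linearisation slope $K \approx (1-\lambda)^t/\lambda^t$ near $A_-$, giving $K^{j-1}\eps \ge \sqrt\eps$ at the crossover time~$j$; the exponent in the statement comes from $\log K \le t(1-2\lambda)/\lambda$, not from the root separation $\log(r_+/r_-)$ you invoke.
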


\begin{proof} 
Let $\xi_k = e^{\beta'- p S_{k-2}}$. 
The dynamics of the map $\eta_k: r \mapsto r - \frac{\xi_k}{4r}$ depend
crucially on whether $\xi_k > 1$ or $\xi_k \le 1$.
These cases are roughly parallel to 
$\lambda \ge \frac12,\ t < t_1 \text{ close to } t_1$ and 
$ \lambda < \frac12,\ t < 1 \text{ close to } 1$.
However, if $pS_{k-2}$ is sufficiently large, the factor $e^{-pS_{k-2}}$ turns 
the first case into the second.

By Lemma~\ref{lem:eta},
if $\xi_k \le 1$, then  $\eta_k$ has an attracting fixed point,
tending to $1$ as $\xi_k \to 0$. 
Therefore, once $\beta' - pS_{k-2} \le 0$, and assuming that
$u_k \geq \FP_k$ where $\FP_k \le \frac12$ is the repelling fixed point of $\eta_k$,
the orbit of $u_k$ will tend
to the attracting fixed point which itself moves to $1$ at an exponential rate
as $k \to \infty$.
However, if $\xi_k > 1$, \ie $\xi_k$ is ``before'' the  saddle node bifurcation that produces the fixed point $\frac12$, then
$u_k$ will decrease and eventually become negative.
The crux of the proof is therefore to show that the
``tunnel'' between the graph of $\eta_k$ and the diagonal closes up
before the orbit $(u_k)_{k \ge 0}$ has moved through this tunnel.

We fix $\xi = e^{\beta'}$ and $\delta $ as in \eqref{eq:delta},
and we will choose $p$ so that the repelling fixed
point $\FP_k$ is to the left of the tunnel (of width $2\delta$ and centred around the point $x = \sqrt{\xi}/2$ at which $\eta'(x) = 1$), 
\ie
$$
\FP_k = \frac12\left(1-\sqrt{1-\xi e^{-pS_{k-2}}}\right) \le
\frac12 \sqrt{\xi} - \delta = \frac12 \left( \sqrt{\xi} - G \sqrt{\xi-1}  \right),
$$
where we abbreviated $G = \sqrt{\frac{8}{3(\sqrt{\xi}+1)}} < \frac{6}{5}$.
Assuming that equality holds, and solving for $e^{-pS_{k-2}}$, we obtain
$$
1- \xi e^{-pS_{k-2}} = (\sqrt\xi - 1 ) \left(\sqrt\xi -1 - 2G \sqrt{\xi-1}  + (\sqrt \xi + 1) G^2\right),
$$
which can be reduced to $pS_{k-2} = 2(1+G^2) (\sqrt\xi - 1) + o(\sqrt\xi - 1)$.

Lemma~\ref{lem:eta} states that the passage through the tunnel takes at least 
$$
k = \sqrt{\frac{3(\sqrt{\xi}+1)}{2(\xi-1)}} = \frac{2}{G \sqrt{\xi - 1}}
\le \frac{5}{6} \frac{2}{\sqrt{-\log[\lambda(1-\lambda)] (t_2-t)}}
$$
iterates. 
Note that $2(1+G^2) = 14/3 < 5$. Choose $\tau_1 = 5\gamma^2 (\sqrt{e^{\beta'}}-1)
= 5\gamma^2 (\sqrt\xi - 1)$
and  $p \ge \tau_1 e^{-\frac{5 \Gamma}{6 \sqrt{t_2-t}}}$.
Then
$$
p \ge \tau_1 e^{-\frac{5}{6} \frac{\Gamma}{\sqrt{t_2-t}} }
\ge \tau_1 e^{-k \log \gamma}
= 5 \gamma^2 (\sqrt\xi-1) \gamma^{-k} > \frac{2(1+G^2)(\sqrt\xi - 1)}{S_{k-2}}.
$$
Hence $pS_{k-2} > (1+G^2) (\sqrt \xi - 1) + o(\sqrt\xi - 1)$ and we conclude that the tunnel closes with fixed point to the left of the tunnel, before
$u_k$ passes through it.
At (or before) this iterate, $u_k$ starts to increase again, and 
eventually converge to $1$ at an exponential rate.

Now let us assume that $\xi < 1$, so there is a (left) fixed point
$\FP_- = \frac12(1-\sqrt{1-\xi})$ which for $t = 1$ coincides
with $u_1 = \lambda^t$.
For $t < 1$, we have $u_1 < \FP_-$, say $\FP_1-u_1 = \eps = \eps(t)$, and 
Taylor expansion shows that 
$$
\eps(t) = \frac12\left(1-2\lambda^t - \sqrt{1-4e^\beta}\right)
= C(1-t) + O((1-t)^2)
$$ 
for $C = \lambda \log \lambda - \lambda(1-\lambda) 
\frac{\log[\lambda(1-\lambda)]}{1-2\lambda}$.
Assume that $j$ is the first iterate such that $u_1-\sqrt{\eps} \ge u_j$.
Let $K = \eta'(u_1-\sqrt{\eps}) \le \frac{e^\beta}{ (u_1-\sqrt{\eps})^2 }
\approx  \frac{e^\beta}{u_1^2} = \frac{(1-\lambda)^t}{\lambda^t}$.
By taking a line with slope $K$ through the point $(\FP_-, \FP_-)$ to
approximate the graph of $\eta_j$ (and this line lies below the graph
of $\eta_k$ on the interval $[u_1-\sqrt{\eps}, \FP_-]$), we can estimate
$u_j \ge u_1-K^{j-1} \eps$, so $K^{j-1} \ge 1/\sqrt{\eps}$.

Due to the inequality $\log K \le t \log \left(\frac{1-\lambda}{\lambda}\right) \le
t \left(\frac{1-2\lambda}{\lambda}\right)$, and taking $\tau_1' = (\log 5) C^{ \frac{\lambda \log \gamma}{2t(1-2\lambda)} }$,
the condition
$p > \tau_1' (1-t)^{ \frac{\lambda \log \gamma}{2t(1-2\lambda)} }$ implies
$$
p > \log 5 \cdot \left(C (1-t) \right)^{ \frac{\log \gamma}{2\log K} }
\ge (\log 4) (\sqrt{\eps})^{\frac{\log \gamma}{ \log K}} \ge
\frac{\log 4}{ K^{ (j-1) \frac{\log \gamma}{\log K} } } =
\frac{\log 4}{\gamma^{j-1}} \approx \frac{\log 4}{S_{j-1}}.
$$
Let $\FP_j$ be the left fixed point of 
$\eta_j$.  Thus, given that $p \ge (\log 4)/S_{j-1}$, we find that 
$\FP_j = \frac12(1-\sqrt{1-e^{\beta'-pS_{j-1}}}) > \lambda^t/4$
for $t$ close to $1$.
Therefore $\FP_j < u_j$, and $u_j$ will converge to the attracting fixed point $\FP_+$, which itself converges exponentially to $1$.
\end{proof}

\begin{proposition}\label{prop:upperboundp0}
For the constants $\tau_1,\tau_1'$ from Lemma~\ref{lem:eta2}
we have the following upper bounds for the pressure:

$$
p(t) \le \begin{cases}
\tau_1 e^{-\frac{5}{6}\frac{\Gamma}{\sqrt{t_1-t}}} & \text{ if } \lambda \ge \frac12,\ t < t_1 \text{ close to } t_1; \\
\tau_1' (1-t)^{ \frac{\lambda \log \gamma}{2t(1-2\lambda)} } & \text{ if } \lambda < \frac12,\ t < 1 \text{ close to } 1.
\end{cases}
$$
\end{proposition}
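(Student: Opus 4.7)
My overarching plan is to show that for every $p$ strictly exceeding the claimed thresholds there exists a $(\phi_t-p)$-conformal measure on $(I,f_\lambda)$; then $\Pconf(\phi_t)\le p$ follows from the definition \eqref{eq:Pconf}, and letting $p$ decrease to the threshold in each regime completes the proof. By Lemma~\ref{lem:conformal-induced}(b) it suffices to produce a $(\Phi_t-p\tau)$-conformal probability measure on the induced system $(Y,F_\lambda)$; via the symmetric split used in Proposition~\ref{prop:conformal}, this is equivalent to exhibiting a non-negative solution $(\tilde w_k^t)_{k\ge 1}$ to \eqref{eq:tildew} with $\sum_k \tilde w_k^t = 1$.

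The key reduction is to rewrite \eqref{eq:tildew} as a non-autonomous one-dimensional dynamical system and match it with Lemma~\ref{lem:eta2}. Differencing successive equations (compare \eqref{eq:wkrecursive}) produces the ratio recursion $r_{k+1} = e^{-pS_{k-2}} - e^{\beta - pS_k}/r_k$ for $r_k := \tilde w_k^t/\tilde w_{k-1}^t$. Using the Fibonacci identities $S_k - S_{k-2} = S_{k-1}$ and $S_{k-1} - S_{k-3} = S_{k-2}$, the substitution $u_k := e^{pS_{k-3}} r_k$ converts this into precisely the recursion $u_{k+1} = 1 - e^{\beta' - pS_{k-2}}/(4u_k)$ of Lemma~\ref{lem:eta2}; the initial value inherited from \eqref{eq:tildew} is $u_2 = \lambda^t e^{-p}$, agreeing with the lemma's $u_1 = \lambda^t$ up to a factor $e^{-p}$ that is at most the threshold itself. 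Lemma~\ref{lem:eta2} then yields $u_k \ge 1/3$ for every $k$ and exponential convergence $u_k \to 1$ whenever $p$ strictly exceeds the stated threshold; a monotonicity/comparison argument for $\eta_k$ carries the conclusion over from the lemma's initialisation to ours, the perturbation being negligible on the time-scale on which the critical "tunnel" in the proof of Lemma~\ref{lem:eta2} closes. Unwinding the substitution gives $r_k > 0$, hence $\tilde w_k^t > 0$ for all $k$, with precise asymptotics $r_k \asymp e^{-pS_{k-3}}$.

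Finally, the probability-normalisation $\sum_k \tilde w_k^t = 1$ is secured by invoking the dichotomy in Lemma~\ref{lem:tildewt<t0p>0}: the exponential rate $u_k \to 1$ places the sequence in the super-exponentially decaying branch $\tilde w_k^t = \exp(\beta k - p S_{k+1} + \alpha_k)$ with $(\alpha_k)$ convergent, which is exactly the form characterising $H(p,t) = 1$. The resulting $(\Phi_t - p\tau)$-conformal probability measure lifts via Lemma~\ref{lem:conformal-induced}(b) to a $(\phi_t - p)$-conformal measure on $I$, finishing the argument. The principal obstacle I anticipate is the perturbative bridge noted above: transporting the conclusion of Lemma~\ref{lem:eta2} from its postulated starting value $u_1 = \lambda^t$ to the slightly smaller $u_2 = \lambda^t e^{-p}$ supplied by \eqref{eq:tildew}, through the precise critical passage of the $\eta_k$-dynamics that determines the threshold. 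This should be tractable by a direct quantitative comparison, but it is the step controlling how tight the final upper bound can be made, and hence deserves the most care in the full write-up.
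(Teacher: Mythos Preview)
Your reduction to the $u_k$-recursion and the application of Lemma~\ref{lem:eta2} are exactly right and match the paper. The genuine gap is in your normalisation step, and it stems from reading Lemma~\ref{lem:tildewt<t0p>0} backwards. You claim that $u_k\to 1$ exponentially places $(\tilde w_k^t)$ in the branch $\tilde w_k^t=\exp(\beta k-pS_{k+1}+\alpha_k)$ characterising $H(p,t)=1$. But the opposite is true: if $\prod_{j\ge 1}u_j=u_\infty>0$ (which is what Lemma~\ref{lem:eta2} delivers), then from $\tilde w_{k+1}^t=e^{-p(S_{-1}+S_0+\cdots+S_{k-2})}\tilde w_1^t\prod_{j=1}^k u_j$ and the Fibonacci identity $\sum_{j\le k-2}S_j=S_k-1$ you get $\tilde w_{k+1}^t e^{pS_k}\to e^{p}\tilde w_1^t u_\infty>0$, i.e.\ $\tilde w_k^t\asymp e^{-pS_{k-1}}$, which is the \emph{other} branch of Lemma~\ref{lem:tildewt<t0p>0}, the one with $H(p,t)\neq 1$. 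Concretely, $1-H_{k-1}=\tilde w_{k+1}^t e^{-\beta+pS_k}\to e^{-\beta+p}\tilde w_1^t u_\infty>0$, so $H(p,t)<1$ strictly. Hence for $p$ above the threshold there is \emph{no} $(\Phi_t-p\tau)$-conformal probability measure; your overarching plan of manufacturing a conformal measure for every such $p$ cannot work.

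The good news is that the same computation gives the proposition immediately once you reverse the logic: for every $p$ above the threshold you have shown all $\tilde w_k^t(p)>0$ and $H(p,t)<1$, so the value $p(t)$ defined by $H(p(t),t)=1$ with non-negative summands must satisfy $p(t)\le$ threshold. No conformal-measure construction is needed. This is essentially how the paper proceeds: it extracts $H_{k-1}(p)\le 1$ from positivity of the $\tilde w_k^t$, and then (somewhat redundantly, since the $u_\infty>0$ argument above already gives strict inequality) runs a parallel recursion for $\partial H/\partial p$ to obtain $H(p,t)<1$. Finally, your flagged ``principal obstacle'' is a non-issue: with the paper's indexing $u_k=e^{pS_{k-2}}\tilde w_{k+1}^t/\tilde w_k^t$ one checks directly that $u_1=\lambda^t e^{-p(S_1-S_0)}e^{pS_{-1}}=\lambda^t$, matching the initial condition of Lemma~\ref{lem:eta2} exactly.
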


\begin{proof}
Let $u_k = \frac{\w_{k+1}}{\w_k} e^{p S_{k-2}}$, so
$u_1 = \lambda^t e^{p(S_{-1} + S_0 - S_1)} = \lambda^t > \frac12$ (where we set $S_{-1} = 1$ by default).
From \eqref{eq:wkrecursive} we have
\begin{equation} \label{eq:uk}
u_{k+1} = 1 - \frac{e^{\beta' - p S_{k-2}}}{4 u_k}.
\end{equation}
For $p > \tau_1 e^{-\frac{5}{6}\frac{\Gamma}{\sqrt{t_1-t}}}$
or $p > \tau_1 (1-t)^{ \frac{\lambda \log \gamma}{2t(1-2\lambda)} } $
as given in Lemma~\ref{lem:eta2}, the iterates 
$u_k$ are bounded away from zero and $u_k \to 1$ exponentially.
Therefore
\begin{eqnarray*}
u_\infty := \prod_{j \ge 1} u_j &=& 
u_1 \cdot \prod_{j \ge 2}
\left( 1-\frac{e^{\beta'-pS_{j-1}}}{4u_{j-1}} \right) \\
&\ge& r_2 \cdot \prod_{j \ge 3}
\left( 1-\frac{3e^{\beta'-pS_{j-1}}}{8} \right) > 0
\end{eqnarray*}
because $\sum_{j \ge 2} 3e^{\beta'-pS_{j-1}}/8 < \infty$ and all terms $e^{\beta'-pS_{j-1}}/8$ are uniformly bounded away from 1.
Since
$$
\w_{k+1} = e^{-p(S_{k-2}+S_{k-1} + \dots + S_{-1})}
 \cdot \w_1 \cdot  \prod_{j=1}^k u_j,
$$
it follows that all $\w_k$ are positive, and as
$\w_k = e^{\beta-pS_{j-1}} \left(1-\sum_{j < k-1} \w_j \right)$,
also
$$
H_{k-1}(p,t) := \sum_{j < k-1} \w_k \le 1
$$ 
for all $k$.

To prove that $H_k(p) < 1$ for $p > \tau_1 e^{-\frac{5}{6}\frac{\Gamma}{\sqrt{t_1-t}}}$
or $p > \tau_1' (1-t)^{ \frac{\lambda \log \gamma}{2t(1-2\lambda)} } $, 
we will show that 
$\frac{\partial H_k(p)}{\partial p} < 0$ for these values of $p$.
Observe that $\partial H(p,t)/\partial p$
satisfy the recursive relation:
$$
\begin{array}{ll}
H_1 = (1-\lambda)^t e^{-p} & 
\frac{\partial H_1}{\partial p} = -(1-\lambda)^t e^{-p} < 0. \\[2mm]
H_2 = (1-\lambda)^t e^{-p} + e^\beta e^{-2p} & 
\frac{\partial H_2}{\partial p} = -(1-\lambda)^t e^{-p}
- 2e^{\beta-2p} < \frac{\partial H_1}{\partial p}. \\[2mm]
\qquad \vdots & \qquad\, \vdots \\[2mm]
H_j = H_{j-1} + e^{\beta-pS_{j-1}}(1-H_{j-2}) \quad & 
\frac{\partial H_j}{\partial p} = 
\frac{\partial H_{j-1}}{\partial p}-
e^{\beta-pS_{j-1}}\frac{\partial H_{j-2}}{\partial p} \\[2mm]
 & 
\qquad\quad  -\ S_{j-1}e^{\beta - pS_{j-1}}(1-H_{j-2}). 
\end{array}
$$
Writing $U_j := \frac{\partial H_j}{\partial p}$ and 
$q_{j+1} := U_{j-1}/U_{j-2}$, we find $q_4 = 1+2e^{\beta-2p} > 1$ and
$$
q_{j+1} = \frac{U_{j-1}}{U_{j-2}} 
=  \eta_j(q_j) := 1 - \frac{e^{\beta'-pS_{j-2}}}{ 4q_j} \left(1+S_{j-2} \frac{1-H_{j-3}}{U_{j-3}}\right) \ge 1 - \frac{e^{\beta'-pS_{j-2}}}{ 4q_{j-2}},
$$
where the final inequality relies on $U_{j-3}$ being negative.
This follows from induction, combined with Lemma~\ref{lem:eta2},
which implies that
$q_k \ge \frac13$ and $q_k \to 1$ exponentially fast,
so $\prod_{i \ge 4} q_i > 0$.
It follows that 
$$
 \frac{\partial H}{\partial p} = 
\lim_{j\to \infty} U_j = U_1 \cdot \lim_{j \to \infty} \prod_{i=4}^j q_i  < 0,
$$
and hence $H(p,t) < 1$.
\end{proof}

\begin{remark}
The techniques in this proof give no explicit formula for 
$\frac{\partial H}{\partial p}$ and $\frac{\partial H}{\partial t}$
as $t \nearrow t_1$, so they don't answer the question whether 
$\frac{dp}{dt} \to 0$ as $t \nearrow t_1$.
\end{remark}

\subsection{Existence and uniqueness of $\Pconf(\phi_t)$}

\begin{lemma}
For all $t< t_1$ there exists $p_u\ge p_\ell\ge 0$ such that $H(p_\ell,t)= 1$ and $\w_i(p_\ell)\ge 0$ for all $i$, and $H(p, t)<1$ for all $p\ge p_u$.
\label{lem:H finite}
\end{lemma}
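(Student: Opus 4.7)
This is delivered by Proposition~\ref{prop:upperboundp0}: its proof shows that for $p$ strictly above the threshold stated there, every $\tilde w_k^t(p) > 0$ and $\partial H/\partial p < 0$, while $\tilde w_k^t(p) \to 0$ uniformly in $k$ as $p \to \infty$ (so $H(p,t) \to 0$). Strict monotonicity then yields $H(p,t) < 1$ throughout $[p_u, \infty)$ for any $p_u$ slightly above that threshold.

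\textbf{Plan for $p_\ell$.} The approach is a topological/continuity argument. Two preliminaries are needed. First, each $p \mapsto \tilde w_k^t(p)$ is continuous, by induction on $k$ from \eqref{eq:tildew}. Second, for $p \ge \delta > 0$ the bound $|\tilde w_j^t(p)| \le e^{\beta - p S_{j-1}}(1 + |H_{j-2}(p)|)$ together with the super-exponential growth of $S_j$ drive a Gronwall-type induction that produces a uniform $L^\infty$-bound $|H_k(p)| \le M(\delta)$, and hence
$$
|\tilde w_k^t(p)| \le C(\delta)\, e^{-\delta S_{k-1}} \quad \text{for all } k \ge 1 \text{ and all } p \ge \delta.
$$
Consequently $H$ is continuous on $(0,\infty)$ and $H_K \to H$ uniformly on each compact subinterval.

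With these tools in hand, introduce the closed set
$$
B := \{\, p > 0 : H(p,t) \le 1 \text{ and } \tilde w_k^t(p) \ge 0 \text{ for all } k \,\}.
$$
The construction of $p_u$ gives $[p_u,\infty) \subset B$, while continuity of each $\tilde w_k^t$ at $p = 0$ combined with Lemma~\ref{lem:tildewp=0} (some $\bar w_{k_0}^t < 0$ when $t < t_1$) forces $B \cap (0,\eps) = \emptyset$ for $\eps$ small. Setting
$$
p_\ell := \inf\{\, p > 0 : [p,\infty) \subset B \,\} \in (0, p_u],
$$
closedness of $\{p : [p,\infty)\subset B\}$ yields $p_\ell \in B$, so $H(p_\ell, t) \le 1$ and $\tilde w_k^t(p_\ell) \ge 0$ for all $k$. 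To upgrade to equality $H(p_\ell, t) = 1$, minimality supplies a sequence $p_n \uparrow p_\ell$ with $p_n \notin B$: for each $n$, either $H(p_n, t) > 1$ (case A), or some $\tilde w_{k_n}^t(p_n) < 0$ (case B). Case A gives $H(p_\ell, t) \ge 1$ by continuity. In case B, a bounded subsequence of $(k_n)$ would yield, by continuity, some $k \ge 3$ with $\tilde w_k^t(p_\ell) = 0$; then \eqref{eq:tildew} would force $H_{k-2}(p_\ell) = 1$ and
$$
\tilde w_{k+1}^t(p_\ell) = -e^{\beta - p_\ell S_k}\,\tilde w_{k-1}^t(p_\ell),
$$
so non-negativity would iteratively collapse $\tilde w_j^t(p_\ell) = 0$ downward to $j = 1$ or $2$, contradicting $\tilde w_1^t, \tilde w_2^t > 0$. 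Hence $k_n \to \infty$, so $H_{k_n-2}(p_n) > 1$; the uniform tail control from the second preliminary then gives $H(p_\ell, t) = \lim_n H_{k_n-2}(p_n) \ge 1$.

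\textbf{Main obstacle.} The delicate ingredient is the limit step for case B with $k_n \to \infty$: the sequence $p_n < p_\ell$ may lie in the region where Lemma~\ref{lem:tildewt<t0p>0} is unavailable, some $\tilde w_j^t(p_n)$ being negative, so its super-exponential decay estimate cannot be invoked. The sign-independent $L^\infty$-bound on $H_k$ for $p \ge \delta$ above is precisely what rescues the argument, supplying uniform tail control along sequences approaching $p_\ell$ from below.
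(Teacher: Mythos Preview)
Your argument is essentially correct but takes a genuinely different route from the paper's, and there is one minor gap worth flagging.

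\textbf{The gap.} Your existence of $p_u$ relies on Proposition~\ref{prop:upperboundp0}, whose hypotheses (via Lemma~\ref{lem:eta2}) explicitly require $t$ \emph{close to} $t_1$; the lemma you are proving is stated for \emph{all} $t<t_1$. The method of Proposition~\ref{prop:upperboundp0} does extend (for large $p$ each $\xi_k=e^{\beta'-pS_{k-2}}$ is small, so the orbit $(u_k)$ stays near $1$), but you should say this rather than invoke the proposition as stated. The paper sidesteps this entirely: once one shows inductively that for large $p$ all $\tilde w_k^t>0$ and $H_k<1$, the crude bound $\tilde w_k^t\le e^{\beta-pS_{k-1}}$ gives $H(p,t)<1$ directly.

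\textbf{Comparison of approaches for $p_\ell$.} The paper works with \emph{partial} sums and defines
\[
p_k := \inf\{\,p\ge 0: H_j(p',t)<1\text{ for all }j\le k,\ p'\ge p\,\},\qquad p_\infty:=\sup_k p_k,
\]
then shows $H(p_\infty,t)=1$ using the key monotonicity observation that $H_j(p_k,t)=1$ with $j<k$ forces $H_{j+1}(p_k,t)>1$ by the recurrence, contradicting the definition of $p_k$. Since each $H_k$ is automatically finite and continuous, no Gronwall-type tail control is needed until the final limit step. You instead work with the full sum $H$ and the closed set $B$, which obliges you to first establish uniform tail bounds on $[\delta,\infty)$ (your Gronwall argument is correct: $M_k\le(1+a_k)M_{k-1}+a_k$ with $a_k=e^{\beta-\delta S_{k-1}}$ summable). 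Your case analysis is sound, though the bounded sub-case of Case~B can be shortened: once $\tilde w_k^t(p_\ell)=0$ gives $H_{k-2}(p_\ell)=1$, non-negativity of all $\tilde w_j^t(p_\ell)$ already yields $H(p_\ell,t)\ge H_{k-2}(p_\ell)=1$, so the collapsing-to-contradiction step, while valid, is unnecessary. The paper's approach is lighter in machinery; yours is more topological in flavour and makes the role of continuity of $H$ explicit.
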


We will show later in this section that in fact $p_u=p_\ell$ for $t$ close to $t_1$; and then 
 in Section~\ref{sec:CMS} that this is actually true for all $t$.

\begin{proof}
For any $p>0$, since $\w_i(p)\le e^{\beta-pS_{k-1}}$, we have $H(p,t)<\infty$.  
This fact also implies that $H(p,t)<1$ for all large $p$, thus proving the existence of $p_u$.  

For each $(p,t)$, define the partial sums $H_j = H_j(p,t) := \sum_{i \le j} \w_i(p)$. 
Recall from Lemma~\ref{lem:tildewp=0}
that there is some minimal $k_0 \in \N$ such that $\w_{k_0}(0)=\bar w_{k_0}^t<0$.  
By the recurrence relations defining $\w_k(0)$, this means 
that $H_{k_0-2}(0,t)>1$.
Now we prove the existence of a solution to the equation 
$H(p,t)=1$ with all $\w_j(p)\ge 0$ by continuity.
For $k \in \N$, let
$$
p_k := \inf\left\{p\ge 0:H_j(p',t)<1 \text{ for each } j \le k \text{ and } p'\ge p\right\}.
$$ 
We collect some facts: 

\begin{enumerate}[label=\textbullet,  itemsep=0.0mm, topsep=0.0mm, leftmargin=7mm]
\item $\sup_k p_k\in (0,\infty)$.  
Since $H_{k_0-2}(0,t)>1$ for some $k_0\in \N$ as shown before, 
combined with the fact that 
 $(p_k)_{k\ge 1}$ is a non-decreasing (which follows immediately from the definition of $p_j$)
gives that  $\sup_k p_k > 0$. The finiteness follows 
from the bound $\w_i(p)\le e^{-pS_{k-1}}$. 
\item If $p_k>0$ then $H_k(p_k,t)=1$.  This follows since each map $p\mapsto H_k(p,t)$ is continuous in $p$, so by the definition of $p_k$ as an infimum, there must exist a minimal $j \le k$ such that $H_j(p_k, t)=1$.  
But our recurrence relation 
\eqref{eq:difference_tildew} implies that $H_{j+1}(p_k,t)= H_j(p_k,t) + e^{\beta-p_kS_k}(1-H_{j-1}(p_k,t))>H_j(p_k,t)=1$.  
This must also hold for all $p$ sufficiently close to $p_k$,  
so if $j<k$ then this contradicts the definition of $p_k$.
\item $\w_j(p')\ge 0$ for all $j\le k$ and $p'\ge p_k$.  
If this fails, take the minimum such $k$ and note that \eqref{eq:difference_tildew} implies that 
$H_{j-2}(p', t)>1$, a contradiction.
\end{enumerate}

Now define $p_\infty:=\sup_k p_k$.   It follows immediately from this definition that for any $j \in \N$, $H_j(p_\infty, t)<1$ so $H(p_\infty,t)\le 1$.  
Note that this also implies that $\w_j(p_\infty)\ge 0$ for all $j\in \N$. 

To show that $H(p_\infty, t)=1$, notice that for $p>0$ and any $j\in \N$, 
$$
H(p, t) = H_j(p,t)+ \sum_{k>j}\w_k(p)\ge H_j(p,t)-e^\beta\sum_{k>j}e^{-pS_k}.
$$
So defining $j_0\in\N$ such that $p_{j_0}>0$, let $s(j):=e^\beta \sum_{k>j}e^{-p_{j_0}S_k}$.  Then for $p_j\ge p_{j_0}$,
$$H(p_j,t)\ge H_j(p_j, t)-s(j)=1-s(j).$$
So since $s(j)\to 0$ as $j\to \infty$, we have $H(p_j,t)\to 1$ as $j\to \infty$.  Therefore, the continuity of $p\mapsto H(p,t)$ on the domain where the sums are bounded implies that $H(p_\infty,t)= 1$.
\end{proof}

\begin{proposition}\label{prop:Umonotone}
There is at most one solution $p = p(t)$ to $H(p,t) = 1$ with all
$\w_k > 0$. Moreover,
$\frac{\partial H}{\partial p} < 0$, $\frac{\partial H}{\partial t} < 0$,
and the map $t \mapsto p(t)$ is analytic with  $\frac{dp}{dt} < 0$
on $(t_1-\eps, t_1)$.
\end{proposition}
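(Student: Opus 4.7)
The plan is to establish, in order: strict monotonicity $\partial H/\partial p(p(t),t)<0$; the analogous inequality $\partial H/\partial t(p(t),t)<0$; uniqueness of $p(t)$; and finally analyticity of $t\mapsto p(t)$ together with the sign of $dp/dt$. Throughout, $t\in(t_1-\eps, t_1)$ and we use $p(t)>0$ from Proposition~\ref{prop:lowerboundp0} together with $\tilde w_k^t(p(t))>0$ for every $k$ from Lemma~\ref{lem:H finite}.

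To show $\partial H/\partial p(p(t),t)<0$, I differentiate the recursion $H_j=H_{j-1}+e^{\beta-pS_{j-1}}(1-H_{j-2})$ in $p$ to obtain
\[
U_j:=\frac{\partial H_j}{\partial p}=U_{j-1}-e^{\beta-pS_{j-1}}U_{j-2}-S_{j-1}\tilde w_j^t,
\]
with $U_1=-(1-\lambda)^t e^{-p}<0$ and $U_2=U_1-2e^{\beta-2p}<0$. Setting $v_{j+1}:=U_{j-1}/U_{j-2}$ yields precisely the non-autonomous recursion
\[
v_{j+1}=1-\frac{e^{\beta'-pS_{j-2}}}{4v_j}\left(1+S_{j-2}\frac{1-H_{j-3}}{U_{j-3}}\right)
\]
from the proof of Proposition~\ref{prop:upperboundp0}. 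At $p(t)$, Lemma~\ref{lem:tildewt<t0p>0} supplies the super-exponential decay $\tilde w_k^t=e^{\beta k-pS_{k+1}+\alpha_k}$ with bounded $\alpha_k$, so both $1-H_{j-3}=\sum_{k\ge j-2}\tilde w_k^t$ and $|U_{j-3}|$ vanish at controlled super-exponential rates whose ratio dominates the factor $S_{j-2}$. Combined with $pS_{j-2}\to\infty$, the inductive $v_j$-analysis of Proposition~\ref{prop:upperboundp0} carries over and yields $v_j\ge 1/3$ with $v_j\to 1$ exponentially. Hence $\partial H/\partial p=\lim_j U_j=U_1\prod_{i\ge 2}v_i<0$.

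Differentiating in $t$ instead replaces $-S_{j-1}\tilde w_j^t$ with $\tilde w_j^t\log[\lambda(1-\lambda)]$, again strictly negative since $\log[\lambda(1-\lambda)]<0$; the identical $v_j$-machinery gives $\partial H/\partial t(p(t),t)<0$. For uniqueness, observe that by Lemma~\ref{lem:H finite} we have $H(p,t)\le 1$ on $[p(t),\infty)$. If some $p^*>p(t)$ also satisfied $H(p^*,t)=1$ with all $\tilde w_k^t(p^*)>0$, then the argument above (applied at $p^*$) would give $\partial H/\partial p(p^*,t)<0$, forcing $H(p^*-\delta,t)>1$ for small $\delta>0$ and contradicting $H\le 1$ on $[p(t),p^*]$.

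Finally, on a neighbourhood of $(p(t),t)$, $H(p,t)$ is an absolutely convergent sum of real-analytic functions of $(p,t)$ (each $\tilde w_k^t$ is a finite polynomial in $e^{-p},\lambda^t,(1-\lambda)^t$), hence jointly real-analytic there. With $\partial H/\partial p\ne 0$, the analytic implicit function theorem applied to $H(p,t)=1$ gives an analytic branch $t\mapsto p(t)$ with
\[
\frac{dp}{dt}=-\frac{\partial H/\partial t}{\partial H/\partial p}<0.
\]
The principal technical obstacle is verifying that the correction $S_{j-2}(1-H_{j-3})/U_{j-3}$ in the $v_{j+1}$ recursion remains bounded as $j\to\infty$: both numerator and denominator decay super-exponentially, and the delicate balance needed rests on the precise asymptotic form of $\tilde w_k^t$ from Lemma~\ref{lem:tildewt<t0p>0}.
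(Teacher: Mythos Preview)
Your setup of the $U_j$ and $v_{j+1}=U_{j-1}/U_{j-2}$ recursions is exactly right, and the differentiation in $t$ as well as the implicit function theorem step are correct. However, the ``principal technical obstacle'' you flag at the end is a genuine gap, and your proposed resolution is both circular and partly incorrect. You claim $|U_{j-3}|$ vanishes super-exponentially, but in fact if $\prod v_i$ converges to a positive limit then $U_j\to U_1\prod_{i\ge 4}v_i<0$ is a nonzero constant---which is precisely what you are trying to establish, so you cannot assume it. Moreover, your appeal to ``the inductive $v_j$-analysis of Proposition~\ref{prop:upperboundp0}'' ultimately rests on Lemma~\ref{lem:eta2}, whose hypothesis requires $p$ above the threshold $\tau_1 e^{-\frac{5}{6}\Gamma/\sqrt{t_1-t}}$; but Proposition~\ref{prop:upperboundp0} itself shows $p(t)$ lies \emph{at or below} that threshold, so Lemma~\ref{lem:eta2} is not available at $p=p(t)$.

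The paper bypasses all of this with a one-line comparison. The point is not the \emph{size} of the correction term $S_{j-2}(1-H_{j-3})/U_{j-3}$ but its \emph{sign}: since $1-H_{j-3}>0$ and (inductively) $U_{j-3}<0$, the parenthesis $(1+S_{j-2}(1-H_{j-3})/U_{j-3})$ is $\le 1$, so
\[
v_{j+1}\ \ge\ 1-\frac{e^{\beta'-pS_{j-2}}}{4v_j}\ =\ \eta_j(v_j).
\]
This is exactly the recursion \eqref{eq:uk} satisfied by $u_k=\tilde w_{k+1}^t e^{pS_{k-2}}/\tilde w_k^t$, for which positivity is \emph{given} by the hypothesis $\tilde w_k^t>0$. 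Since each $\eta_j$ is increasing and $v_4>1\ge\lambda^t=u_1$, monotone comparison yields $v_{j+3}\ge u_j>0$ for every $j$, whence $0<\prod_{k\ge1}u_k\le\prod_{k\ge4}v_k$ and $\partial H/\partial p=U_1\prod_{k\ge4}v_k<0$. No asymptotic analysis of the correction term, and no tunnel estimate, is needed.
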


\begin{proof} The proof uses many of the ideas of the proof of Proposition~\ref{prop:upperboundp0}.
The previous proof shows that positivity of all $\w_k$ is equivalent to
positivity of the numbers $u_k = \frac{\w_{k+1}}{\w_k} e^{pS_{k-1}}$ from \eqref{eq:uk}. 
Therefore, if $p = p(t)$ is a solution to the problem
$\w_k > 0$ and $H(p,t) = 1$, then
the corresponding sequence $(u_k)_k$ is positive.
Positivity of $\frac{\partial H(p,t)}{\partial p}$ is equivalent to
positivity of an orbit $(v_k)_k$ for a slightly different but larger map,
and with an initial value $v_4 > 1 \ge u_1$.
Therefore, as $(u_k)_k$ is positive, so is $(v_k)_k$, and
$0 < \prod_{k \ge 1} u_k \le \prod_k v_{k \ge 4} = v_\infty$, whence 
$\frac{\partial H(p,t)}{\partial p} = v_\infty \cdot \frac{\partial H_1(p,t)}{\partial p} < 0$.
This shows that there can be at most one solution to $H(p,t) = 1$.

We can use the same technique to estimate
$\frac{\partial H(p,t)}{\partial t}$ for $t < t_1$:
$$
\begin{array}{ll}
H_1 = (1-\lambda)^t e^{-p} & 
\frac{\partial H_1}{\partial t} = \log(1-\lambda)(1-\lambda)^t e^{-p} < 0. \\[2mm]
H_2 = (1-\lambda)^t e^{-p} + e^\beta e^{-2p} & 
\frac{\partial H_2}{\partial t} = \log(1-\lambda)(1-\lambda)^t e^{-p} \\[2mm]
& \qquad \qquad
+\ \log[\lambda(1-\lambda)] e^{\beta-2p} < \frac{\partial H_1}{\partial t}. \\[2mm]
\qquad \vdots & \qquad \vdots \\[2mm]
H_j = H_{j-1} + e^{\beta-pS_{j-1}}(1-H_{j-2}) \quad & 
\frac{\partial H_j}{\partial t} = 
\frac{\partial H_{j-1}}{\partial t} - e^{\beta-pS_{j-1}}\frac{\partial H_{j-2}}{\partial t} \\[2mm]
 & 
\qquad\quad  +\ \log[\lambda(1-\lambda)]e^{\beta - pS_{j-1}}(1-H_{j-2}). 
\end{array}
$$
If we now write $U_j = \frac{\partial H_j}{\partial t}$ and 
$q_{j+1} := U_{j-1}/U_{j-2}$, we find 
$q_4 = 1+\frac{\log [\lambda(1-\lambda)] e^{\beta-p}}{\log(1-\lambda) (1-\lambda)^t} > 1$ and
$$
q_{j+1} = \frac{U_{j-1}}{U_{j-2}} 
=  1 - \frac{e^{\beta'-pS_{j-2}}}{ 4q_j} \left(1-\log[\lambda(1-\lambda)] \frac{1-H_{j-3}}{U_{j-3}}\right) \ge 1 - \frac{e^{\beta'-pS_{j-2}}}{ 4q_{j-2}},
$$
where the final inequality relies on $U_{j-3}$ being negative.
The same argument shows that $\frac{\partial H}{\partial t} < 0$ as well.
Furthermore, since $H(p,t)$ is analytic in both $p$ and $t$, the Implicit Function Theorem  implies that
$t \mapsto p(t)$ is analytic on $(t_1-\eps, t_1)$ and $\frac{dp}{dt} < 0$.
\end{proof}

\section{Invariant measures}\label{sec:invariant}
Now we look at the invariant measure $\mu_{t,p} \ll m_{t,p}$ for $t < t_1$.

\begin{theorem}\label{thm:existence_invariant_measures}
Suppose $t < t_1$ and $p>0$ satisfies $H(p,t)=1$ with all summands non-negative.
Then we have the following:
\begin{enumerate}[label=({\alph*}),  itemsep=0.0mm, topsep=0.0mm, leftmargin=7mm]
\item There is an $F_\lambda$-invariant measure $\mu_t=\mu_{t,p}\ll m_{t,p}$;
\item The Radon-Nikodym derivative 
$\frac{d\mu_t}{dm_t}$ is bounded and bounded away from zero;
\item  $\mu_t$ projects to an $f_\lambda$-invariant probability measure $\nu_t \ll n_t$.
\end{enumerate}
\end{theorem}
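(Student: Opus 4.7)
The approach is to construct $\mu_t$ as an $F_\lambda$-invariant measure with piecewise constant density $\rho_k = d\mu_t/dm_{t,p}|_{W_k}$ (and similarly $\hat\rho_k$ on $\hat W_k$). Since $F_\lambda$ is linear on each Markov piece, the Perron--Frobenius equation $\mathcal L\rho = \rho$ reduces to a countable linear system in $(\rho_k,\hat\rho_k)$, which, by subtracting consecutive equations exactly in the fashion of \eqref{eq:tildew_difference0}, becomes a second-order recurrence with $k$-dependent coefficients. Alternatively, one can descend first to the Stratmann--Vogt quotient $T_\lambda$ via the factor map $\pi$ and lift the invariant measure back to $F_\lambda$ by splitting mass between $W_j$ and $\hat W_j$, exactly as in the proof of Proposition~\ref{prop:conformal}; this has the advantage of reducing the problem to a genuine countable Markov shift, to which the general theory developed in Section~\ref{sec:CMS} applies.

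For (a) and (b), the key input is that $H(p,t) = 1$ with nonnegative $\tilde w_k^t$ is precisely the positive-recurrence criterion for the associated countable Markov shift. Under this hypothesis, the non-autonomous iteration analysis already carried out in Lemma~\ref{lem:eta2} and Proposition~\ref{prop:upperboundp0} produces a positive solution to the eigenvector equation whose ratios $\rho_{k+1}/\rho_k$ converge exponentially to a finite positive limit. Since the same mechanism governs the decay of $m_{t,p}(W_k) = \tilde w_k^t$, the quotient $\rho_k$ is uniformly bounded above and below in $k$, proving (b); summing gives a finite $F_\lambda$-invariant measure, which after normalisation proves (a).

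For (c), by Lemma~\ref{lem:conformal-induced}(c) it suffices to verify $\sum_i \tau_i\, \mu_t(\W_i) < \infty$ with $\tau_i = S_{i-1} \sim \gamma_+^{i-1}$ in the Fibonacci case. Part (b) gives $\mu_t(\W_i) \asymp \tilde w_i^t$, and since $p > 0$, Lemma~\ref{lem:tildewt<t0p>0} yields $\tilde w_i^t = e^{\beta i - pS_{i+1} + \alpha_i}$, super-exponentially small in $i$, so the exponential growth of $\tau_i$ is easily absorbed. The measure $\nu_t$ produced by Lemma~\ref{lem:conformal-induced}(c) is therefore a well-defined $f_\lambda$-invariant probability measure, and it is absolutely continuous with respect to $n_t$ because the same tower-averaging construction applied to $m_{t,p}$ produces (a measure equivalent to) $n_t$ via Lemma~\ref{lem:conformal-induced}(b), and $\mu_t \ll m_{t,p}$ is preserved at each tower level. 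The main technical obstacle is the two-sided density bound in (b): it requires comparing two non-autonomous second-order recurrences ratio-by-ratio and showing the quotient lies in a compact subset of $(0,\infty)$, which leans on the strict positivity $p > 0$ to keep us safely away from the critical threshold at $p = 0$ at which the iteration $\eta_k$ of Lemma~\ref{lem:eta2} loses its attracting fixed point.
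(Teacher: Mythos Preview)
Your overall architecture is right --- construct an $F_\lambda$-invariant measure via a left eigenvector of the transition matrix, bound the density, then project --- and your argument for (c) is essentially the paper's. But parts (a) and (b) have genuine gaps.

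First, invoking the countable Markov shift theory of Section~\ref{sec:CMS} here is circular in the paper's logical structure: Proposition~\ref{prop:pconf p} deduces positive recurrence of $\Phi_t-\tau p$ \emph{from} Theorem~\ref{thm:existence_invariant_measures}, not the other way around. If you want to run Sarig's RPF theorem directly, you must independently verify recurrence via the series in Definition~\ref{def:recurrence}, which you do not do. The condition $H(p,t)=1$ with nonnegative summands is the existence of a $(\Phi_t-\tau p)$-conformal probability measure; it is not, on its face, the positive-recurrence criterion.

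Second, Lemma~\ref{lem:eta2} and Proposition~\ref{prop:upperboundp0} analyse the \emph{conformal} weights $\tilde w_k^t$ (via the ratio recursion $u_{k+1}=1-e^{\beta'-pS_{k-2}}/(4u_k)$), not the invariant-measure weights $\tilde v_k^t$, which satisfy the different left-eigenvector equation $\tilde v^t G^t=\tilde v^t$ for the stochastic matrix $G^t$ in \eqref{eq:matrixG}. These lemmas also only apply for $t$ near $t_1$, whereas the theorem covers all $t<t_1$. Even granting that both sequences have ratios tending to a limit, this does not by itself give that the density $h_k=\tilde v_k^t/\tilde w_k^t$ is bounded: you would need the two limits to coincide, which requires a separate argument.

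The paper instead constructs $\tilde v^t$ directly by a power-method iteration $v^{(n)}=v^{(n-1)}G^t$ starting from $v^{(0)}=(1,0,\dots)$, proving tightness of $(v^{(n)})_n$ via an inductive super-exponential tail bound, and then extracts the density bounds from the explicit relation \eqref{eq:vkn1}: monotonicity of $h_k$ gives the lower bound, and a bootstrap inequality of the form $h_k\le 2C+\tfrac12\sup_{j<k}h_j$ along a subsequence gives the upper bound. This is where the real work lies, and your proposal does not supply a substitute for it.
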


\begin{proof}
The solution $\tilde {\underline w}^t$ to \eqref{eq:tildew} and
$H(p,t) = 1$ gives rise to a probability  transition matrix
\begin{equation*}
G^t = \left( \begin{array}{cccccc}
\w_1    & \w_2 & \w_3 & \w_4 & \hdots & \hdots   \\[2mm]
\w_1   & \w_2 & \w_3 & \w_4 & \hdots & \hdots  \\[2mm]
0      & \frac{\w_2}{\sum_{i \ge 2}\w_i}       & \frac{\w_3}{\sum_{i \ge 2}\w_i}   & \frac{\w_4}{\sum_{i \ge 2}\w_i}  & \hdots &  \\[2mm]
0   & 0   & \frac{\w_3}{\sum_{i \ge 3}\w_i}       & \frac{\w_4}{\sum_{i \ge 3}\w_i}   &   & \\
0   & 0   & 0 & \ddots   & \ddots   &  \\
\vdots   &    &   & \ddots   & \ddots   & 
\end{array} \right).
\end{equation*}
The left eigenvector $\tilde{\underline{v}}^t
= (\tilde v_1^t, \tilde v_2^t, \dots )$ for eigenvalue $1$ represents the invariant measure: $\mu_{t,p}(W_k \cup \hat W_k) = \tilde v_k^t$.
To find it, we start with $v^{(0)} := (1,0,0,\dots)$ and iterate
$v^{(n)} = v^{(n-1)} G^t$.
Since $G^t$ is a stochastic matrix (\ie nonnegative and with row-sums are $1$),
each $v^{(n)}$ is non-negative and has $\| v^{(n)}\|_1 =1$ as well.
We prove by induction in $n$ that $v^{(n)}_j$ decreases super-exponentially in $j$.
We will show that there is $K \in \N$ such that for all $n \ge 0$,
\begin{equation}\label{eq:super-polynomial}
v^{(n)}_k \le  \frac{\w_{k-1}}{\w_{k-2}}\ \text{ for all } k \ge K.
\end{equation}
Since $\w_k = e^{\beta k - \gamma^2p S_{k+1} + \alpha_k}$ 
decrease super-exponentially in $k$
as described in \eqref{eq:wkalpha}
we can find $K$ such that
$\frac{\w_k}{\w_{k-1}} \le \frac12  \frac{\w_{k-1}}{\w_{k-2}}$  for $k\ge K$.
Clearly \eqref{eq:super-polynomial} holds for $v^{(0)}$. For the inductive step,
assume \eqref{eq:super-polynomial} holds for $n-1$.
Then for $k \ge K$ arbitrary, 
\begin{eqnarray}\label{eq:vkn1}
v_k^{(n)} &=& \w_k \left( v_1^{(n-1)} + v_2^{(n-1)} 
+ \sum_{j=3}^{k+1} \frac{v_j^{(n-1)}}{ \sum_{i \ge j-1} \w_i} \right)\\
&=& \w_k \left( v_1^{(n-1)} + v_2^{(n-1)} 
+ \sum_{j=3}^k \frac{v_j^{(n-1)}}{ \sum_{i \ge j-1} \w_i} \right)
+ v_{k+1}^{(n-1)} \left( 1- \frac{ \sum_{i \ge k+1} \w_i}{ \sum_{i \ge k} \w_i} \right) \nonumber \\
&\le& \left(v_1^{(n-1)} + \dots + v_{k-1}^{(n-1)}
+  v_k^{(n-1)} \right) \frac{\w_k}{\w_{k-1}} + v_{k+1}^{(n-1)}\nonumber \\
&\le& \frac{\w_k}{\w_{k-1}} +  \frac{\w_k}{\w_{k-1}} \le
 \frac{\w_{k-1}}{\w_{k-2}},\nonumber
\end{eqnarray}
where in the last line we used that $\| v^{(n-1)} \| = 1$ as well the choice of $K$.
This shows that although the unit ball in $l^1$ is not compact, 
the sequence $( v^{(n)})_{n \ge 0}$ is tight, and hence must have
a convergent subsequence.
Since $G^t$ is clearly an irreducible aperiodic matrix,
$(v^{(n)})_{n \ge 0}$ converges; let $\tilde v^t$ be the limit.
Then $\tilde v^t$ is positive and $\| \tilde v^t\|_1 = 1$.

The measure $\mu_t$ defined by the piecewise constant Radon-Nikodym derivative 
$h_k := h|_{W_k \cup \hat W_k} = \frac{\mu_t(W_k \cup \hat W_k)}{m_t(W_k \cup \hat W_k)} = \frac{\tilde v^t_k}{\w_k}$
is now easily seen to be invariant. By taking the limit $n \to \infty$ in
\eqref{eq:vkn1}, we find 
$$
\tilde v_k^t = \w_k \left( \tilde v_1^t + \tilde v_2^t 
+ \sum_{j=3}^{k+1} \frac{\tilde v_j^t}{ \sum_{i \ge j-1} \w_i} \right)\\
= \frac{\w_k}{\w_{k-1}} \tilde v_{k-1}^t
+ \frac{\w_k}{ \sum_{i \ge j-1} \w_i} \tilde v_{k+1}^t,
$$
and dividing this by $\w_k$ shows that $(h_k)_{k \in \N}$ is increasing,
and hence bounded away from $0$.
Now for the upper bound, taking the limit  $n \to \infty$ in \eqref{eq:super-polynomial} 
shows that  $\tilde v_k^t \to 0$ super-exponentially fast.
Take $K \in \N$ such that
$$
\sum_{k \ge K} \frac{\w_k}{\w_{k-1}} < \frac14
$$
Then by \eqref{eq:vkn1}:
$$
h_k = \frac{\tilde v_k^t}{\w_k} \le \underbrace{\tilde v_1^t +  \tilde v_2^t +
\sum_{j=1}^K \frac{\tilde v_j^t}{\w_{j-1}} }_C + 
 \sum_{j=K+1}^{k-1} \frac{\w_j}{\w_{j-1}} h_j
+
\frac{\w_k}{\w_{k-1}} h_k
+ \frac{\tilde v_{k+1}^t}{\tilde v_k^t} h_k.
$$
This gives 
\begin{equation}
h_k \le \frac{C + (\sup_{j < k} h_j) \cdot \sum_{j=K+1}^{k-1} \frac{\w_j}{\w_{j-1}}}{1 - \frac{\w_k}{\w_{k-1}} - \frac{\tilde v_{k+1}^t}{\tilde v_k^t}}.
\label{eq:h_k}
\end{equation}
If $\frac{\tilde v_{k+1}^t}{\tilde v_k^t} \le \frac14$, then so long as $k$ is sufficiently large,  \eqref{eq:h_k} yields $h_k \le 2C + \frac12 \sup_{j < k} h_j$.  Since $h_k$ is an increasing sequence, 
we conclude that $h_k\le 4C$ and moreover, $h_j\le 4C$ for all $j\le k$.  The fact that $\tilde v_k^t\to 0$ super-exponentially implies that there are infinitely many $k$ satisfying $\frac{\tilde v_{k+1}^t}{\tilde v_k^t} \le \frac14$, so $h_j\le 4C$ for all $j\in \N$, concluding the upper bound.

Since $\mu_t(W_i \cup \hat W_i) = \tilde v_k^t$ decreases super-exponentially,
 $\Lambda := \sum_j S_{j-1} \mu_t(W_j \cup \hat W_j) < \infty$ for 
$t < t_1$, so by Lemma~\ref{lem:conformal-induced}, $\mu_t$ pulls back to an $f_\lambda$-invariant probability
measure $\nu_t \ll n_t$.
\end{proof}

\section{Thermodynamic formalism for countable Markov shifts}
\label{sec:CMS}

\subsection{Countable Markov shifts}
In previous sections we have computed quantities such as pressure rather directly, which gives a fuller understanding of the underlying properties of our class of dynamical systems.
In this section we use the theory of countable Markov shifts, as developed by Sarig, to prove stronger results more indirectly.  In particular, we can obtain information about the pressure and equilibrium states for $\phi_t$ for \emph{all} $t\in \R$.

Let $\sigma : \Sigma \to \Sigma$ be a one-sided Markov shift
with a countable alphabet $\N$. That is, there exists a matrix
$(t_{ij})_{\N \times \N}$ of zeros and ones (with no row and no column
made entirely of zeros) such that
\[
\Sigma=\{ x\in \N^{\N_0} : t_{x_{i} x_{i+1}}=1 \ \text{for every $i
\in \N_0$}\},
\]
and the shift map is defined by $\sigma(x_0x_1 \cdots)=(x_1 x_2
\cdots)$. We say that $(\Sigma,\sigma)$ is a \emph{countable
Markov shift}.
We equip $\Sigma$ with the topology generated by the cylinder sets
$$
[e_0 \cdots e_{n-1}] = \{x \in \Sigma : x_j = e_j \text{ for } 0 \le j < n \}.
$$

Given a function
$\phi\colon \Sigma \to\R$, for each $n \ge 1$ we define the \emph{variation}
on $n$-cylinders
\[
V_{n}(\phi) = \sup \left\{|\phi(x)-\phi(y)| : x,y \in \Sigma,\
x_{i}=y_{i} \text{ for } 0 \le i < n \right\}.
\]
We say that $\phi$ has \emph{summable variations} if
$\sum_{n=2}^{\infty} V_n(\phi)<\infty$; clearly summability implies continuity of $\phi$.
In what follows we assume
$(\Sigma, \sigma)$ to be topologically mixing (see \cite[Section 2]{Sartherm} for a precise definition).

Based on work of Gurevich \cite{Gutopent, Gushiftent}, Sarig \cite{Sartherm} introduced a notion of pressure for countable Markov shifts which does not depend upon the metric of the space and which satisfies a Variational Principle.
Let $(\Sigma, \sigma)$ be a topologically mixing countable Markov shift, fix a symbol $e_0$ in the alphabet $\N$ and
let $\phi \colon \Sigma \to \R$ be a potential of summable variations.  
We let the {\em local partition function at $[e_0]$} be 
\begin{equation*}
Z_n(\phi, [e_0]):=\sum_{x:\sigma^{n}x=x} e^{S_n\phi(x)} \chi_{[e_{0}]}(x)
\end{equation*}
and
$$
Z_n^*(\phi, [e_0]) := \sum_{\stackrel{x:\sigma^{n}x=x,}{x:\sigma^{k}x \notin [e_{0}] \ \mbox{\tiny for}\ 0< k < n}} \hspace{-10mm} e^{S_n\phi(x)} \chi_{[e_{0}]}(x),
$$
where $\chi_{[e_{0}]}$ is the characteristic function of the
$1$-cylinder $[e_{0}] \subset \Sigma$,
and $S_n\phi(x)$ is $\phi(x) + \dots + \phi \circ \sigma^{n-1}(x)$.
The so-called \emph{Gurevich pressure} of $\phi$ is defined
by
the exponential growth rate
\[
 P_G(\phi) := \lim_{n \to
\infty} \frac{1}{n} \log Z_n(\phi, [e_0]).
\]
Since $\sigma$ is topologically mixing, one can show that $P_G(\phi)$ does not depend on $e_0$.  If $(\Sigma, \sigma)$ is the full-shift on a countable alphabet then the Gurevich pressure coincides with the notion of pressure introduced by Mauldin \& Urba\'nski \cite{MUifs}.

The following can be shown using the proof of \cite[Theorem 3]{Sartherm}.

\begin{proposition}[Variational Principle] \label{prop:VarPri}
If $(\Sigma, \sigma)$ is topologically mixing, $\phi: \Sigma \to \mathbb{R}$ has summable variations and $\phi<\infty$, then
\begin{equation*}
P_G(\phi)= P(\phi). 
\end{equation*}
\end{proposition}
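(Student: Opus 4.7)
The plan is to prove the two inequalities $P_G(\phi)\le P(\phi)$ and $P_G(\phi)\ge P(\phi)$ separately, following Sarig's standard strategy of exhaustion by finite subsystems. For each $M\in \N$, let $\Sigma_M\subset \Sigma$ denote (the maximal topologically mixing component of) the subshift on the restricted alphabet $\{1,\ldots,M\}$. By topological mixing of $\sigma$ on $\Sigma$, the $\Sigma_M$ can be arranged to be, for all $M$ large, topologically mixing subshifts of finite type on a finite (hence compact) alphabet; the classical Walters Variational Principle then gives
$$P_G(\phi|_{\Sigma_M}) = \sup\left\{h_\mu + \int \phi\, d\mu : \mu\text{ is }\sigma\text{-invariant and supported on }\Sigma_M\right\}.$$

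For the inequality $P_G(\phi)\le P(\phi)$, I would invoke Sarig's approximation lemma, which under topological mixing asserts $P_G(\phi)=\sup_M P_G(\phi|_{\Sigma_M})$. The proof of this lemma decomposes $Z_n(\phi,[e_0])$ in \eqref{eq:Zn} according to the maximal symbol used by each periodic orbit, bounds the contribution of orbits that visit symbols $>M$ using the summable variation of $\phi$ (which gives uniform Gibbs-type estimates on cylinders of the subsystem), and controls the remainder by $Z_n(\phi|_{\Sigma_M},[e_0])$. Combining Sarig's lemma with the finite-alphabet VP yields $P_G(\phi)=\sup_M P_G(\phi|_{\Sigma_M})\le P(\phi)$ immediately, since every measure on $\Sigma_M$ is also a measure on $\Sigma$.

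For the reverse inequality, fix a $\sigma$-invariant probability $\mu$ with $\int \phi\, d\mu>-\infty$. I would approximate $\mu$ by invariant measures $\mu_M$ supported on the subsystems $\Sigma_M$: a Walters-type construction using the first return time of $\mu$-typical orbits to $\bigcup_{i\le M}[i]$, followed by an Abramov-style lifting to an invariant measure on a finite subsystem, produces $\mu_M$ satisfying $h_{\mu_M}+\int\phi\, d\mu_M\to h_\mu+\int\phi\, d\mu$ as $M\to\infty$. Applying the finite-alphabet VP on each $\Sigma_M$ gives $h_{\mu_M}+\int\phi\, d\mu_M\le P_G(\phi|_{\Sigma_M})\le P_G(\phi)$, and passing to the limit and then taking the supremum over $\mu$ yields $P(\phi)\le P_G(\phi)$, closing the loop.

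The principal difficulty lies in this last approximation step: the space $\Sigma$ is non-compact, and neither $\mu\mapsto h_\mu$ nor $\mu\mapsto \int\phi\, d\mu$ is weak-$*$ continuous in this setting. The hypotheses of summable variation, $\phi<\infty$, and $\int\phi\, d\mu>-\infty$ supply precisely the integrability required to apply Shannon--McMillan--Breiman to the first-return induced system on $[e_0]$ and the Abramov formula, guaranteeing simultaneous convergence of $h_{\mu_M}$ and $\int\phi\, d\mu_M$ along the exhaustion. Any further refinement (for example, proving lower semicontinuity rather than just convergence along this specific approximating sequence) is unnecessary for the stated proposition.
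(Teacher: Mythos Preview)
Your outline is correct and coincides with what the paper does: the paper gives no independent argument but simply invokes the proof of \cite[Theorem~3]{Sartherm}, and your sketch is precisely Sarig's strategy of exhausting by finite-alphabet subsystems and appealing to the classical Variational Principle on each. One minor imprecision worth flagging is that in Sarig's actual argument for the inequality $P(\phi)\le P_G(\phi)$, the induced system is taken over a \emph{single} state $[e_0]$ (yielding a countable full shift on first-return words, which is then truncated to finitely many symbols), not over $\bigcup_{i\le M}[i]$ as you write; but this does not affect the validity of the scheme.
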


\begin{definition}\label{def:recurrence}
The potential $\phi$ is said to be {\em recurrent} if \footnote{The convergence of this series is independent of the cylinder set $[e_0]$,
so we suppress it in the notation.}
\begin{equation*}
\sum_n e^{-nP_G(\phi)} Z_n(\phi) = \infty.
\end{equation*}
Otherwise $\phi$ is \emph{transient}.
Moreover, $\phi$ is called \emph{positive recurrent} if it is
recurrent and 
$$
\sum_n ne^{-nP_G(\phi)}Z^*_n(\phi) < \infty.
$$
If $\phi$ is recurrent but not positive recurrent, then it is called 
 \emph{null recurrent}.
\end{definition}

We use the standard {\em transfer operator}
$(L_\phi v)(x) = \sum_{\sigma y =x} e^{\phi(y)} v(y)$,
with dual operator $L^*_\phi$.
Notice that a measure $m$ is $\phi$-conformal if and only if
$L^*_\phi m = m$.

The following theorem is  \cite[Theorem 1]{Sarnull}.  Note that the next two theorems were originally proved under stronger regularity conditions
(\ie weak H\"olderness) on the potential, but subsequently it was found that these could be relaxed, see for example \cite{Sar_TDF_notes} Chapters 3 and 4.

\begin{theorem}
Suppose that $(\Sigma, \sigma)$ is topologically mixing, $\phi: \Sigma \to \mathbb{R}$ has summable variations and $P_G(\phi)<\infty$.  Then $\phi$ is recurrent if and only if there exists $\lambda>0$ and a conservative sigma-finite measure $m_\phi$ finite and positive on cylinders, and a positive continuous function $h_\phi$ such that $L_\phi^*m_\phi=\lambda m_\phi$ and $L_\phi h_\phi=\lambda h_\phi$.  In this case $\lambda=e^{P_G(\phi)}$.  Moreover,
\begin{enumerate}
\item if $\phi$ is positive recurrent then $\int h_\phi~dm_\phi<\infty$;
\item if $\phi$ is null recurrent then $\int h_\phi~dm_\phi=\infty$.
\end{enumerate}
\label{thm:RPF}
\end{theorem}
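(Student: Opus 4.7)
Proof sketch. The plan is to invoke Sarig's renewal-theoretic approach: reduce from $(\Sigma,\sigma)$ to a full countable shift by inducing on a distinguished cylinder, apply the classical Ruelle--Perron--Frobenius theorem there (in the Mauldin--Urba\'nski form for full shifts on countable alphabets), and extend the resulting conformal measure and eigenfunction back to $\Sigma$.

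After replacing $\phi$ by $\phi - P_G(\phi)$, which preserves summable variations, I may assume the target eigenvalue is $\lambda = 1$ and $P_G(\phi) = 0$. Fix a symbol $a \in \N$ and let $\tau_a(x) = \inf\{n \ge 1 : \sigma^n x \in [a]\}$ be the first return time. The induced map $\bar\sigma := \sigma^{\tau_a}$ on $[a]$ is isomorphic to a full shift on the countable alphabet of first-return loops, and the induced potential $\bar\phi := S_{\tau_a}\phi$ inherits summable variations from $\phi$ via the standard distortion estimate along excursions. The renewal equation
\[
Z_n(\phi,[a]) = Z_n^*(\phi,[a]) + \sum_{k=1}^{n-1} Z_{n-k}^*(\phi,[a])\, Z_k(\phi,[a]),
\]
passed through the generating functions $G(z) = 1 + \sum_{n \ge 1} Z_n(\phi,[a]) z^n$ and $R(z) = \sum_{n \ge 1} Z_n^*(\phi,[a]) z^n$, becomes $G(z)(1 - R(z)) = 1$. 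Hence $P_G(\phi) = 0$ forces the radius of convergence of $R$ to be at least $1$; recurrence as in \eqref{eq:recurrence} is equivalent to $R(1) = 1$, and positive recurrence to the additional condition $R'(1) < \infty$.

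Assuming $\phi$ recurrent, $R(1) = 1$ means that $([a], \bar\sigma, \bar\phi)$ has Gurevich pressure $0$ and is positive recurrent. For a full shift on a countable alphabet, classical RPF yields a $\bar\phi$-conformal probability $\bar m$ on $[a]$ with $L_{\bar\phi}^*\bar m = \bar m$ and a positive continuous eigenfunction $\bar h$ on $[a]$ with $L_{\bar\phi}\bar h = \bar h$. I extend these to $\Sigma$ by summing over excursions from $[a]$,
\[
m_\phi(E) := \sum_{n \ge 0} \int_{[a]} \mathbf{1}_E(\sigma^n x)\, \mathbf{1}_{\{\tau_a(x) > n\}}\, e^{-S_n\phi(x)}\, d\bar m(x),
\]
\[
h_\phi(x) := \sum_{n \ge 0} e^{S_n\phi(x)}\, \bar h(\sigma^n x)\, \mathbf{1}_{\{\sigma^k x \notin [a]\text{ for }0 \le k < n,\ \sigma^n x \in [a]\}},
\]
and a telescoping calculation using $L_{\bar\phi}^*\bar m = \bar m$ and $L_{\bar\phi}\bar h = \bar h$ yields $L_\phi^*m_\phi = m_\phi$ and $L_\phi h_\phi = h_\phi$. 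For the converse, if $L_\phi^* m_\phi = \lambda m_\phi$ and $L_\phi h_\phi = \lambda h_\phi$ with $m_\phi$ conservative and $h_\phi$ positive continuous, then $\int L_\phi^n \mathbf{1}_{[a]}\, dm_\phi = \lambda^n m_\phi([a])$ is constant-in-$n$ up to a bounded factor; combined with \eqref{eq:Zn} and bounded distortion this forces $\lambda = e^{P_G(\phi)}$ and $\sum_n e^{-nP_G(\phi)} Z_n = \infty$, i.e.\ recurrence.

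For the positive/null dichotomy, normalize $\int_{[a]} \bar h\, d\bar m = 1$, so that $d\bar\mu := \bar h\, d\bar m$ is a $\bar\sigma$-invariant probability on $[a]$. Summing the definition of $h_\phi\, dm_\phi$ over the excursion decomposition and applying Abramov's formula gives
\[
\int_\Sigma h_\phi\, dm_\phi = \int_{[a]} \tau_a\, d\bar\mu,
\]
which by the renewal identity equals (up to a positive constant) $R'(1) = \sum_n n\, Z_n^*(\phi,[a])$. By Definition~\ref{def:recurrence} this is finite exactly in the positive recurrent case and infinite in the null recurrent case. The main obstacle, and the place where summable variations (rather than mere continuity) is essential, is verifying that the extended $m_\phi$ and $h_\phi$ are finite and strictly positive on \emph{every} cylinder, not only on $[a]$ and its immediate preimages; this requires topological mixing so that each cylinder is reached in finite time from $[a]$ in both directions, combined with summable-variation distortion control on $S_n\phi$ along arbitrarily long excursions outside $[a]$, to keep the series defining $m_\phi$ and $h_\phi$ both convergent and bounded below.
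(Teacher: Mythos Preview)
The paper does not prove this theorem at all: it is quoted as \cite[Theorem~1]{Sarnull} (with the relaxation to summable variations attributed to \cite{Sar_TDF_notes}), so there is nothing in the paper to compare your argument against beyond the citation. Your renewal-theoretic outline---induce on a cylinder $[a]$, translate recurrence/positive recurrence into $R(1)=1$ and $R'(1)<\infty$ via the renewal equation, build the eigendata on the full induced shift, and then extend---is indeed the architecture of Sarig's proof, so at the level of strategy you are aligned with the source the paper invokes.

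That said, your extension formula for $h_\phi$ has a genuine gap. As written it picks out the \emph{unique} first entrance time of the forward orbit of $x$ into $[a]$ and returns $e^{S_{\tau(x)}\phi(x)}\bar h(\sigma^{\tau(x)}x)$ (or $0$ if the orbit never enters $[a]$). Topological mixing of the shift does not force every forward orbit to enter $[a]$---already in the full $2$-shift the point $0^\infty$ never visits $[1]$---so your $h_\phi$ vanishes on a nonempty set and cannot be the strictly positive continuous eigenfunction the theorem asserts. More structurally, eigenfunctions of $L_\phi$ are built from \emph{backward} branches (sums over preimages), not forward hitting; Sarig obtains $h_\phi$ as a limit of normalised iterates $L_\phi^n\mathbf 1$ (using the generalised Ruelle theorem in the null recurrent case), and positivity on every cylinder then follows from topological mixing because each cylinder has preimages in $[a]$ after boundedly many steps. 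Your measure extension and the identification $\int h_\phi\,dm_\phi=\int_{[a]}\tau_a\,d\bar\mu$ are on the right track, but the eigenfunction step needs to be redone in the backward direction.
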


Moreover the next theorem follows by \cite[Corollary 2]{Sartherm}:

\begin{theorem}\label{thm:eq}
Suppose that $(\Sigma, \sigma)$ is topologically mixing and $\phi: \Sigma \to \mathbb{R}$ has summable variations and is positive recurrent.  Then for the measure $d\mu=h_\phi dm_\phi$ given by Theorem~\ref{thm:RPF}, if $-\int\phi~d\mu < \infty$, then $\mu$ is the unique equilibrium state for $\phi$.
\end{theorem}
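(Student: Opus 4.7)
The plan is to proceed in three stages: (i) verify that $\mu = h_\phi\, m_\phi$ (after normalisation) is a $\sigma$-invariant probability measure, (ii) compute $h_\mu + \int\phi\,d\mu$ directly using a Rokhlin-type entropy formula and the eigenfunction/eigenmeasure relations from Theorem~\ref{thm:RPF} to show that $\mu$ attains the variational supremum, and (iii) establish uniqueness by inducing to the cylinder $[e_0]$ and invoking classical uniqueness of Gibbs states on a full shift.

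For step (i), the relations $L_\phi h_\phi = e^{P_G(\phi)} h_\phi$ and $L_\phi^* m_\phi = e^{P_G(\phi)} m_\phi$ from Theorem~\ref{thm:RPF} imply, by a short duality calculation, that $h_\phi\,m_\phi$ is $\sigma$-invariant. Positive recurrence yields $\int h_\phi\, dm_\phi < \infty$, so we may normalise and obtain a probability measure $\mu$. For step (ii), the Jacobian of $\mu$ with respect to $\sigma$ is
\begin{equation*}
J_\mu(x) = \frac{h_\phi(x)}{h_\phi(\sigma x)}\, e^{-\phi(x) + P_G(\phi)},
\end{equation*}
as follows from $L_\phi^* m_\phi = e^{P_G(\phi)}m_\phi$. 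Rokhlin's entropy formula (valid once $-\int\phi\,d\mu<\infty$ and the partition into $1$-cylinders is generating and has finite $\mu$-entropy, which here uses summable variations and positive recurrence) gives
\begin{equation*}
h_\mu = \int \log J_\mu\, d\mu = P_G(\phi) - \int \phi\, d\mu,
\end{equation*}
where the telescoping term $\int\log(h_\phi/h_\phi\!\circ\!\sigma)\,d\mu$ vanishes by $\sigma$-invariance (modulo a standard truncation argument to handle the possibility that $\log h_\phi$ is not integrable). Combined with the Variational Principle (Proposition~\ref{prop:VarPri}), this shows $\mu$ is an equilibrium state.

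For step (iii), the key idea is to induce to the first return map $\bar\sigma$ on $[e_0]$, with first return time $\tau$ and induced potential $\bar\Phi = \sum_{j=0}^{\tau-1}(\phi - P_G(\phi))\circ \sigma^j$. Topological mixing of $\sigma$ ensures $\mu([e_0])>0$ for any equilibrium state $\mu'$ (otherwise one can find a better candidate by replacing $\mu'$ by its normalised projection on an appropriate cylinder, contradicting the supremum), and the induced system $(\bar\sigma, [e_0])$ is conjugate to a full shift on a countable alphabet. Positive recurrence translates into $\int\tau\, d\bar\mu < \infty$ for the induced measure $\bar\mu$, and $\bar\Phi$ has summable variations with $P_G(\bar\Phi) = 0$. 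On a full countable shift, any invariant probability measure with $\int\bar\Phi\,d\bar\mu > -\infty$ that attains the variational supremum is the unique Gibbs measure for $\bar\Phi$ (this is the countable-alphabet analogue of the Bowen uniqueness theorem; see Sarig's argument via Ruelle--Perron--Frobenius on the induced transfer operator, which here is quasicompact). Pulling back via Abramov's formula, any equilibrium state $\mu'$ of $\phi$ satisfies $\bar\mu' = \bar\mu$, hence $\mu' = \mu$.

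The main obstacle is step (iii), and specifically the verification that an arbitrary equilibrium state $\mu'$ induces to the unique Gibbs measure on the base. The integrability hypothesis $-\int\phi\,d\mu<\infty$ is used precisely to make the Abramov relations valid and to rule out pathological measures that would concentrate mass near infinity of the alphabet; positive recurrence is what makes the induced potential summable and the Ruelle operator on the base well-behaved. A secondary technical point is justifying Rokhlin's formula in step (ii) when $h_\phi$ may be unbounded, which is handled by monotone approximation on $\{h_\phi \le N\}\cap[e_0]$ and taking $N\to\infty$.
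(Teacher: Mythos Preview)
The paper does not prove this theorem at all; it simply cites it as \cite[Corollary~2]{Sartherm}. So there is no argument in the paper to compare your proposal against directly. Your steps (i) and (ii) are in fact close to Sarig's own route: invariance from the eigen-relations, then an entropy computation. (Sarig avoids the $\log h_\phi$ integrability issue by passing to the normalised potential $\bar\phi = \phi + \log h_\phi - \log h_\phi\circ\sigma - P_G(\phi)$, for which $L_{\bar\phi}1 = 1$ and the density is identically one; this is cleaner than the truncation you suggest.)

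Step (iii), however, has a real gap. Your justification that an arbitrary equilibrium state $\mu'$ satisfies $\mu'([e_0])>0$ (``replace $\mu'$ by its normalised projection on an appropriate cylinder'') is not a meaningful operation on measures, and topological mixing alone does not prevent an ergodic invariant probability from living on a proper subshift that misses $[e_0]$. Even if you grant $\mu'([e_0])>0$, to invoke Abramov and conclude that $\bar\mu'$ is an equilibrium state for $\bar\Phi$ on the induced full shift you need $\int\tau\,d\bar\mu'<\infty$; positive recurrence of $\phi$ controls only $\int\tau\,d\bar\mu$ for the RPF measure, not for a competing $\mu'$. Sarig's uniqueness argument does not induce: he shows that for any invariant probability $\nu$ with $-\int\phi\,d\nu<\infty$ the pressure gap $P_G(\phi)-h_\nu-\int\phi\,d\nu$ is a relative-entropy term that vanishes only when $\nu=\mu$. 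If you want to rescue the inducing approach, you must either first prove that every equilibrium state charges every cylinder and has integrable first-return time to $[e_0]$, or supply an independent argument that the Gurevich pressure strictly drops on any subshift missing a state; neither is trivial, and both are essentially as hard as the uniqueness you are trying to prove.
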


We are now ready to apply this theory to our class of dynamical systems. The following proposition  contains the main ideas for the proof of Theorem~\ref{mainthm:thermo_Fibo_global}, but we state and prove it separately to highlight the connection with the results in Section~\ref{sec:conformal_for_f}.

\begin{proposition}
For each $\lambda\in (0,1)$ and any $t\le t_1$,
\begin{enumerate}[label=({\alph*}),  itemsep=0.0mm, topsep=0.0mm, leftmargin=7mm]
\item there is a unique $p$ such that $H(p,t)=1$ with all summands non-negative;
\item this $p$ is the unique value such that there is a $(\Phi_t-p)$-conformal measure.
\end{enumerate}
\label{prop:pconf p}
\end{proposition}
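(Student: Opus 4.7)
The plan is to combine the existence result Lemma~\ref{lem:H finite} with the monotonicity Proposition~\ref{prop:Umonotone} for part (a), and then exploit the one-to-one correspondence between conformal measures on $(Y, F_\lambda)$ and normalised solutions of \eqref{eq:tildew} for part (b). The main obstacle is that Proposition~\ref{prop:Umonotone} yields uniqueness only among solutions of $H(p,t)=1$ with \emph{strictly positive} summands, whereas Lemma~\ref{lem:H finite} guarantees only non-negativity; the heart of part (a) is closing this gap.

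For part (a), suppose $H(p,t) = 1$ with all $\tilde w_k^t(p) \ge 0$. Since $\tilde w_1^t = (1-\lambda)^t e^{-pS_0}$ and $\tilde w_2^t = \lambda^t(1-\lambda)^t e^{-pS_1}$ are positive by the explicit formulas in \eqref{eq:tildew}, assume for contradiction that $j_0 \ge 3$ is the least index with $\tilde w_{j_0}^t = 0$. The recursion \eqref{eq:tildew} then forces $\sum_{k < j_0-1} \tilde w_k^t = 1$; combined with $H(p,t) = 1$ and non-negativity of all summands, this yields $\tilde w_{j_0-1}^t = 0$ and $\tilde w_k^t = 0$ for all $k \ge j_0$. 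This contradicts either the minimality of $j_0$ (when $j_0 \ge 4$) or directly the positivity of $\tilde w_2^t$ (when $j_0 = 3$). Hence all summands are strictly positive, and Proposition~\ref{prop:Umonotone} (via $\partial H/\partial p < 0$) supplies the uniqueness. For $t < t_1$ existence is provided by Lemma~\ref{lem:H finite}; for $t = t_1$, Lemma~\ref{lem:tildewp=0} shows that $p = 0$ gives a solution with all $\bar w_k^{t_1} > 0$ summing to one, to which the same uniqueness argument applies.

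For part (b), any $(\Phi_t - p\tau)$-conformal measure $m$ on $Y$ may be taken symmetric (by averaging with its pullback under the symmetry $x \mapsto 1-x$, which preserves both $F_\lambda$ and $\Phi_t - p\tau$) and is $\sigma$-finite with $m(\W_j) < \infty$ for every $j$. Writing $a_j := m(\W_j)$ and $R_j := \sum_{k \ge j} a_k$, the conformality identity $m(F_\lambda(W_j)) = s_j^t e^{pS_{j-1}} m(W_j)$, together with the image structure $F_\lambda(W_j) = \bigcup_{i \ge j-1} \hat W_i$ (and analogously for $j=1$ with slope $s_1$), yields the system $a_1 = R_1 (1-\lambda)^t e^{-p}$ and $a_j = R_{j-1}\lambda^t(1-\lambda)^t e^{-pS_{j-1}}$ for $j \ge 2$. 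Finiteness of $a_1$ forces $R_1 = m(Y) < \infty$, so $m$ is finite; normalising to $m(Y) = 1$ recovers exactly the recursion \eqref{eq:tildew} with $H(p,t) = 1$ and non-negative summands, whence part (a) determines $p$ uniquely. Conversely, given the unique $p = p(t)$ from part (a), assigning mass $\tilde w_j^t$ to $\W_j$ split symmetrically between $W_j$ and $\hat W_j$ produces a probability $(\Phi_t - p\tau)$-conformal measure on $(Y, F_\lambda)$ by construction, which Lemma~\ref{lem:conformal-induced} then transfers to the corresponding $(\phi_t - p)$-conformal measure on $(I, f_\lambda)$.
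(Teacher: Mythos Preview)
Your argument is largely sound and more elementary than the paper's, but it leans on Proposition~\ref{prop:Umonotone} beyond the range in which the paper establishes it.

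For part (a), your observation that a solution of $H(p,t)=1$ with non-negative summands automatically has \emph{strictly} positive summands is a clean elementary step that the paper does not isolate. Combined with Proposition~\ref{prop:Umonotone}, this would indeed give uniqueness --- provided that proposition applies for all $t\le t_1$. Here lies the gap: the paper's own roadmap (the remark immediately after Lemma~\ref{lem:H finite}) says explicitly that Section~\ref{sec:conformal_for_f} establishes $p_u=p_\ell$ only for $t$ close to $t_1$, and that the extension to all $t$ requires Section~\ref{sec:CMS}. Concretely, the comparison in the proof of Proposition~\ref{prop:Umonotone} uses $v_4>1\ge u_1=\lambda^t$, which already fails for $t\le 0$; and the analyticity conclusion is stated only on $(t_1-\eps,t_1)$. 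So invoking Proposition~\ref{prop:Umonotone} for arbitrary $t\le t_1$ exceeds what has been proved.

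The paper instead obtains uniqueness through the Gurevich pressure: any $p$ with $H(p,t)=1$ and non-negative summands yields, via Theorem~\ref{thm:existence_invariant_measures} and Theorem~\ref{thm:RPF}, a positive recurrent potential $\Phi_t-\tau p$ with $P_G(\Phi_t-\tau p)=0$; since $\tau\ge 1$, the map $p\mapsto P_G(\Phi_t-\tau p)$ is strictly decreasing, forcing uniqueness. This argument is insensitive to the value of $t$ and is what closes the case for all $t<t_1$. Your route would work once one verifies that the monotonicity $\partial H/\partial p<0$ at every positive-summand solution holds without the restriction $t$ close to $t_1$ (and for $t\le 0$), but that verification is not supplied by the paper as written.

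Your part (b) is correct and in fact more explicit than the paper's treatment: you derive the recursion \eqref{eq:tildew} directly from the conformality identity and the branch structure, whereas the paper appeals to Theorem~\ref{thm:RPF} for the correspondence between conformal measures and the eigen-data of the transfer operator. One minor point: you should state that you take conformal measures finite on cylinders (the natural class in Sarig's framework), since otherwise the step ``finiteness of $a_1$ forces $R_1<\infty$'' is circular.
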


\begin{proof}
We first prove the proposition for the case $t<t_1$, in which case, 
any $p$ satisfying $H(p, t)=1$ with all summands non-negative, must be strictly positive.
The existence of such a $p$ follows by Lemma~\ref{lem:H finite}.  By Theorems~\ref{thm:existence_invariant_measures} and \ref{thm:RPF}, for $p$ as in (a) of the proposition, the potential $\Phi_t-\tau p$  is (positive) recurrent.  Theorems~\ref{thm:existence_invariant_measures}  also implies that  $P_G(\Phi_t-\tau p)=0$.  Since $\tau\ge 1$, for $\eps>0$ we always have $P_G(\Phi_t-\tau (p-\eps))\ge P_G(\Phi_t-\tau p)+\eps$: this means that any such $p$ is unique.  To summarise, there is one and only one $p$ such that $H(p,t)=1$ with all non-negative summands and for this $p$, we have $P_G(\Phi_t-\tau p)=0$.  It is easy to see that such a $p$ yields a $(\Phi_t-\tau p)$-conformal measure.

For the case $t=t_1$, by \cite[Theorem B]{BT3}, $P_G(\Phi_t)=0$.  Theorem B of that paper guarantees that $p=0$ is a solution to $H(p,t)=1$ with all summands positive.  The above argument also shows in this case that if there is a solution $p>0$ to $H(p,t)=1$ with all summands non-negative, then    $P_G(\Phi_t-\tau p)=0$ and again this can only occur if $p=0$.   To show that there is no negative solution, observe
$$w_j^t=e^\beta e^{-pS_{j-1}}\left(1-\sum_{k<j-1} w_k^t\right)> e^\beta e^{-pS_{j-1}}w_j.$$ Therefore we must have $p=0$ as the only solution to $H(p,t)=1$ with all summands positive.
\end{proof}

\subsection{Proof of Theorem~\ref{mainthm:thermo_Fibo_global}}

\begin{proof}[Proof of Theorem~\ref{mainthm:thermo_Fibo_global}] 
We prove parts (a) and (b) simultaneously.
First suppose that $t<t_1$.  As in the proof of Proposition~\ref{prop:pconf p}, $\Phi_t-\tau \Pconf(\phi_t)$ is positive recurrent.
By Theorem~\ref{thm:eq}, $\mu_t$ from Theorem~\ref{thm:existence_invariant_measures} 
is an equilibrium state for $\Phi_t-\tau \Pconf(\phi_t)$ and hence satisfies 
$$
h(\mu_t)+\int (\Phi_t-\tau \Pconf(\phi_t)) d\mu_t=0.
$$
Thus the Abramov formula implies that the projected measure $\nu_t$ has $h(\nu_t)+\int \phi_t~d\nu_t=\Pconf(\phi_t)$, so  $P(\phi_t)\ge \Pconf(\phi_t)$.  If  $P(\phi_t)> \Pconf(\phi_t)$ then there exists a measure $\nu$ (with positive entropy) for which $h(\nu)+\int\phi_t-\Pconf(\phi_t)~d\nu>0$.  Since any such measure must lift to $(Y, F_\lambda)$, the Abramov formula and Proposition~\ref{prop:VarPri} lead to a contradiction.  Hence $P(\phi_t)=\Pconf(\phi_t)$. This also implies that $\nu_t$ is the unique equilibrium state for $\phi_t$. 

For the case $t=t_1$, Proposition~\ref{prop:pconf p} implies that $\Pconf(\phi_t)=0$.   This is clearly the same as $P(\phi_t)$, 
as follows continuity of the pressure. 
The existence/absence of an equilibrium state here follows as in Theorem~\ref{mainthm:Fibo}.

Now let $t > t_1$.
For each $\lambda\in (0,1)$, \cite[Theorem A]{BT3} implies that the 
$\Phi_t$-conformal measure $m_t$, if it exists, is dissipative.  Hence no finite $\mu_t \ll m_t$ exists.
However, just as for smooth Fibonacci maps, $\omega(c)$ supports a unique probability measure $\nu_\omega$,
which has zero entropy.
For each $x \in \omega(c)$ not eventually mapping to $c$, $Df^n_\lambda(x)$ exists for all $n$.
Moreover, $F_\lambda^k(x) \to c$ so that if $n_k \in \N$ is such that $F^k_\lambda(x) = f^{n_k}(x)$,
then $k/n_k \to 0$. The Lyapunov exponent of $x$ under $F_\lambda$ is $-\log[\lambda(1-\lambda)]$,
hence by part c) of Lemma~\ref{lem:conformal-induced}, the Lyapunov exponent of $x$ 
under $f_\lambda$ is $\lim_{k\to\infty} - \frac{k}{n_k} \log[\lambda(1-\lambda)] = 0$.
Therefore $\nu_\omega$ is an equilibrium state in this case
(and in fact also for $t = t_1$).
This concludes the proof of (a) and (b).

Now for part c), Lemma~\ref{lem:tildewp=0} implies that 
$\Pconf(\phi_t) > 0$ for $t < t_1$.
Now if $t > t_1$, then $p = 0$
still gives a conformal measure, see \eqref{eq:mt}.
This is the smallest (and only) value of $p$ to do so, because if
$p < 0$, then $H(p,t)$ no longer converges.
Indeed, by taking the linear combinations in \eqref{eq:tildew},
we get
$$
\w_{j+1} = e^{-p S_j} \left( \w_j e^{p S_{j-1}}
 - e^\beta \w_{j-1} \right).
$$
If $p < 0$, we can no longer assert that $\w_j$ is decreasing
in $j$, but if $H(p,t)$ converges, then there must be
(infinitely many) $j$s such that $\w_j \le \w_{j-1}$.
If also $j$ is so large that $e^{-pS_{j-1}} > [\lambda(1-\lambda)]^{-t}$, then
the equation gives that  $\w_{j+1} < 0$, which is not allowed.
(The only other way of creating a conformal measure for $f$, is by
putting Dirac masses on the critical point and its backward orbit.
Since $f'(c) = 0$, this enforces no mass on the forward
critical orbit. But $f'$ is not defined at $f^{-1}(c) = \{z_0, \hat z_0\}$,
so this gives no solution.)
Therefore $\Pconf(\phi_t) = 0$ for $t \ge t_1$.

Now we turn to analyticity.
As in for example \cite{ITeqnat}, the existence of a unique equilibrium state of positive entropy implies that $p(t):t\mapsto P(\phi_t)$ is $C^1$.
(We can also use the fact that $p'(t)=-\int\log|Df_\lambda|~d\nu_t$, which is 
easily shown to be continuous in $t$.)  
It is easy to see that $Dp(t)<0$ for $t<t_1$.  
Therefore we have, as in Proposition~\ref{prop:Umonotone}, that $p(t)$ is real analytic on $(-\infty, t_1)$.
\end{proof}

\subsection{Proof of Theorem~\ref{mainthm:thermo_Fibo}}

The following proposition, which should be compared to \cite[Proposition 1.2]{ITeqnat}, will tell us the shape of the pressure function at $t_1$. 
This also gives part (d) of Theorem~\ref{mainthm:Fibo}.

\begin{proposition}
The following are equivalent.
\begin{enumerate}[label=({\alph*}),  itemsep=0.0mm, topsep=0.0mm, leftmargin=7mm]
\item The left derivative $D_-p(t_1)<0$;
\item There exists $K>0$,  $\delta>0$ so that for all $t\in (t_1-\delta, t_1)$ there is an equilibrium state $\nu_{t}$ for $-t\log|Df|$ and for the induced version $\mu_{t}$, 
$$\int \tau~d\mu_t=\sum_kS_{k-1}\mu_t(W_k)\le K.$$
\end{enumerate}
Indeed, when the above holds, there is an equilibrium state $\nu_{t_1}$ for $\phi_{t_1}$ and $\int \tau~d\mu_{t_1}\le K$.
\label{prop:deriv t_1}
\end{proposition}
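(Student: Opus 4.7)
The plan is to exploit the crucial feature that the induced map $F_\lambda$ has uniformly bounded log-derivative. By \eqref{eq:slopes_s}, $|DF_\lambda|$ takes only the two values $(1-\lambda)^{-1}$ and $[\lambda(1-\lambda)]^{-1}$, so there exist constants $0 < L_* \le L^* < \infty$ with $L_* \le \log|DF_\lambda| \le L^*$ on $Y$.

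First I will combine the Abramov formula of Lemma~\ref{lem:conformal-induced}(c) with the standard identity $p'(t) = -\int \log|Df_\lambda|\,d\nu_t$ already used in the proof of Theorem~\ref{mainthm:thermo_Fibo_global}(c), to obtain
$$
-p'(t) \ =\ \frac{\int \log|DF_\lambda|\,d\mu_t}{\int \tau\, d\mu_t}\, , \qquad t \in (t_1-\delta, t_1),
$$
where $\nu_t$ and $\mu_t$ are the equilibrium states for $\phi_t$ and $\Phi_t - \tau p(t)$ supplied by Theorem~\ref{mainthm:thermo_Fibo_global}(b) and Theorem~\ref{thm:existence_invariant_measures}. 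The boundedness of $\log|DF_\lambda|$ then forces $L_*/\Lambda_t \le -p'(t) \le L^*/\Lambda_t$ with $\Lambda_t = \int\tau\,d\mu_t$. Implication (b)$\Rightarrow$(a) is immediate: $\Lambda_t \le K$ gives $-p'(t) \ge L_*/K$, hence $-D_-p(t_1)\ge L_*/K > 0$. For (a)$\Rightarrow$(b), convexity of $p$ gives $-p'(t) \ge |D_-p(t_1)|/2 > 0$ for $t$ sufficiently close to $t_1$, so $\Lambda_t \le 2L^*/|D_-p(t_1)| =: K$.

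For the final assertion, assume (a)/(b) holds. The uniform bound $\int\tau\,d\mu_t \le K$ together with $\tau|_{\W_k}=S_{k-1}\to\infty$ yields tightness of $\{\mu_t\}_{t \in (t_1-\delta,t_1)}$ on $Y$, since $\mu_t(\bigsqcup_{k: S_{k-1} > M}\W_k)\le K/M$. Prokhorov furnishes a weak-$*$ limit $\mu_{t_1}$ along some sequence $t_n \uparrow t_1$; $F_\lambda$-invariance transfers via continuity of $F_\lambda$ on each $\W_k$ (the tightness ruling out escape of mass through the accumulating boundaries), and Fatou gives $\int \tau\,d\mu_{t_1} \le K$. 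Projecting via Lemma~\ref{lem:conformal-induced}(c) then produces the $f_\lambda$-invariant probability measure $\nu_{t_1}$.

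The main obstacle is verifying that $\mu_{t_1}$ is actually an equilibrium state. The potential term is painless: $\Phi_{t_1}$ is a bounded two-valued function, so $\int\Phi_{t_1}\,d\mu_{t_n}\to \int\Phi_{t_1}\,d\mu_{t_1}$ by weak-$*$ convergence. The entropy term requires upper semi-continuity of $\mu \mapsto h(\mu)$ along the sequence; here the uniform bound $\Lambda_t \le K$ acts as a ``no entropy escape'' condition, and combined with the countable Markov shift description developed in Section~\ref{sec:CMS} and the boundedness of $\Phi_{t_1}$, gives $\limsup_n h(\mu_{t_n})\le h(\mu_{t_1})$. Since $h(\mu_{t_n})+\int\Phi_{t_n}\,d\mu_{t_n} = p(t_n)\Lambda_{t_n}\to 0 = P(\Phi_{t_1})$, we conclude $h(\mu_{t_1})+\int\Phi_{t_1}\,d\mu_{t_1}\ge 0$, so $\mu_{t_1}$ attains the variational supremum and $\nu_{t_1}$ is an equilibrium state for $\phi_{t_1}$ with the stated bound on inducing time.
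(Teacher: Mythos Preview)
Your proof is correct, and for (b)$\Rightarrow$(a) it matches the paper's argument exactly. For (a)$\Rightarrow$(b), however, you take a different and more elementary route. The paper invokes \cite[Lemma~4.2]{ITeqnat} to produce a uniform entropy lower bound $h(\nu) \ge \eta > 0$ for measures whose free energy is close to $p(t)$, and then bounds $\int\tau\,d\mu \le h_{top}(F_\lambda)/\eta = (\log 4)/\eta$ via Abramov. You instead exploit convexity of the pressure directly: since $p$ is convex, $p'(t) \le D_-p(t_1) < 0$ for $t < t_1$, and the two-sided bound $L_*/\Lambda_t \le -p'(t) \le L^*/\Lambda_t$ (valid precisely because $\log|DF_\lambda|$ takes only two values) immediately gives $\Lambda_t \le L^*/|D_-p(t_1)|$. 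This is self-contained and avoids the external reference; the paper's argument, on the other hand, has the virtue of applying to any near-equilibrium measure, not just the equilibrium state itself.

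For the final assertion, both approaches run a tightness argument, but on different sequences: you let $t_n \uparrow t_1$ and use the already-constructed equilibrium states $\mu_{t_n}$, whereas the paper fixes $t$ (implicitly including $t = t_1$) and takes a sequence $(\nu_n)$ whose free energies approach $p(t)$. Your sequence is the more natural one given what has been established. Both arguments are equally sketchy on the upper semi-continuity of entropy for the limiting measure; the paper appeals to ``continuity of $\Phi_t$ and upper semi-continuity of $-\tau$'' without elaboration, while you invoke the tightness bound as a no-escape-of-entropy condition. In this particular setting the gap is easily closed: the potentials $\Phi_t - \tau p(t)$ are constant on one-cylinders, so the $\mu_t$ are Markov measures whose transition matrices $G^t$ (as in \eqref{eq:matrixG}) converge entry-wise, and the entropy is then an explicit function of these entries.
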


\begin{proof}[Proof of Proposition~\ref{prop:deriv t_1}]
First assume that $K < \infty$ as in item (b) exists.
Since $p'(t)=-\int\log|f'_\lambda|~d\nu_t$, the Abramov formula implies
$$
\int\log|f'_\lambda|~d\nu_t=\frac{\int\log|F'_\lambda|~d\mu_t}{\int \tau~d\mu_t}\ge -\frac{\log\lambda}K,
$$
uniformly in $t$, \ie $D_-p(t_1)\le\frac{\log\lambda}K<0$.

Now let us suppose that $D_-p(t_1)<0$.  As in \cite[Lemma 4.2]{ITeqnat}, there exists $\eta>0$ such that any measure $\nu\in \M$ with $h(\nu)-t\lambda(\nu)$ sufficiently close to $p(t)$ has $h(\nu)\ge \eta$.  Suppose that $(\nu_n)_n$ is a sequence of measures such that  $h(\nu_n)-t\lambda(\nu_n)\to p(t)$. For each $n$, we denote the induced version of $\nu_n$ by $\mu_{n}$.  Now applying the Abramov formula and since $\htop(F_\lambda)=\log 4$, we obtain for all large $n$,
$$\eta\le h(\nu_n)=\frac{h(\mu_{n})}{\int \tau~d\mu_{ n}}\le \frac{\log 4}{\int \tau~d\mu_{ n}},$$
so $\int \tau~d\mu_{ n}\le (\log 4)/\eta$.

Since $\int \tau~d\mu_{ n}\le (\log 4)/\eta$ for all large $n$, for any $\eta'>0$, there must be some $N\in \N$ such that $\mu_{n}\left(\cup_{k=1}^NW_k\right)>1-\eta'$ for all large $n$.   Notice that the choice of $(\nu_n)_n$, the Abramov formula and the uniform bound on the integral of inducing times implies that
$$
h(\mu_n)-\int (\Phi_t-\tau p(t))~d\mu_n\to 0 \text{ as } n\to \infty.
$$
The proof now concludes by a tightness argument.  Let $\mu_\infty$ be a vague limit of $(\mu_n)_n$, see for example \cite[Section 28]{Bill_book}.  This measure is non-zero since $\mu_{n}\left(\cup_{k=1}^NW_k\right)>1-\eta'$ for all $n\in \N$.  We may assume that it is a probability measure.  The Monotone Convergence Theorem implies that $\int\tau~d\mu_\infty\le (\log 4)/\eta$.  Moreover, the continuity of $\Phi_t$ and the upper semi-continuity of $-\tau$  implies that $\mu_\infty$ is an equilibrium state for $\Phi_t-\tau p(t)$.  The fact that the integral of the inducing time is finite implies that we can project $\mu_\infty$ to an equilibrium state $\nu_t$ for $\phi_t$, as required.
\end{proof}

\begin{proof}[Proof of Theorem~\ref{mainthm:thermo_Fibo}]
The lower and upper bounds for the pressure on a left neighbourhood of 
$t_1$ stated in (a) and (b) follow from Proposition~\ref{prop:lowerboundp0}  (with in one case $\tau_0 \tilde C$ renamed to $\tau_0'$) and 
 Proposition~\ref{prop:upperboundp0} respectively.
Finally, part (c) follows from Proposition~\ref{prop:deriv t_1}.
\end{proof}

\subsection{Recurrence and transience}
\label{ssec:recurrence}

We finish the paper with a brief discussion of recurrence/transience in the context of our examples using the definitions given above.   Since we can view $(Y, F_\lambda)$ as a countable Markov shift, by Theorem~\ref{thm:RPF}, Proposition~\ref{prop:pconf p} and 
Theorem~\ref{thm:existence_invariant_measures} we have the following results for the system $(Y, F_\lambda, \Phi_t-\tau p)$: note that the precise behaviour at $t=t_1$ is governed by the case $p=0$ which is discussed in Section~\ref{sec:example}, see also \cite{BT3}:

\begin{enumerate}[label=$\bullet$,  itemsep=0.0mm, topsep=0.0mm, leftmargin=7mm]
\item If $\lambda\in (1/2,1)$ then $(Y, F_\lambda, \Phi_t-\tau p)$ is recurrent iff $t<t_1<1$ and $p=\Pconf(\phi_t)$.  Whenever the system is recurrent, it is positive recurrent .
\item If $\lambda\in (0, 1/2)$ then $(Y, F_\lambda, \Phi_t-\tau p)$ is recurrent iff $t\le t_1=1$  and $p=\Pconf(\phi_t)$.   Whenever the system is recurrent, it is positive recurrent.
\item  If $\lambda=1/2$ then $(Y, F_\lambda, \Phi_t-\tau p)$ is recurrent iff $t\le t_1=1$ and $p=\Pconf(\phi_t)$.  It is null recurrent for $t=1$ and positive recurrent if $t<1$.
\end{enumerate}

For the original system, the Markov shift model is less  easy to handle, so we prefer 
an alternative definition of recurrence.  In \cite{ITtransient} a system $(X, f, \phi)$ was called recurrent whenever there was a conservative $\phi$-conformal measure $m_\phi$ and transient otherwise.  A recurrent system was defined as being positive recurrent if there was an $f$-invariant probability measure $\mu_\phi\ll m_\phi$, and null-recurrent otherwise.  With this in mind, the results of this paper allow use to state:

\begin{enumerate}[label=$\bullet$,  itemsep=0.0mm, topsep=0.0mm, leftmargin=7mm]
\item If $\lambda\in (1/2,1)$ then $(I, f_\lambda, \phi_t- p)$ is recurrent iff $t<t_1<1$ and $p=\Pconf(\phi_t)$.  Whenever the system is recurrent, it is positive recurrent.
\item If $\lambda\in (0, 1/2]$ then  $(I, f_\lambda, \phi_t- p)$ is recurrent iff $t\le t_1=1$  and $p=\Pconf(\phi_t)$.   When the system is recurrent and $p=\Pconf(\phi_t)$, it is positive recurrent iff $\lambda\in (0,\frac2{3+\sqrt 5})$.
\end{enumerate}

\end{document}